\documentclass[12pt]{amsart}

\usepackage{latexsym}
\usepackage{amssymb}
\usepackage{amsmath}
\usepackage{mathrsfs}

\newtheorem{theorem}{Theorem}[section]
\newtheorem{lemma}[theorem]{Lemma}
\newtheorem{proposition}[theorem]{Proposition}
\newtheorem{corollary}[theorem]{Corollary}

\theoremstyle{definition}
\newtheorem{definition}[theorem]{Definition}

\newtheorem{example}[theorem]{Example}

\theoremstyle{remark}
\newtheorem{remark}[theorem]{Remark}

\def\N{\mathbb{N}}

\begin{document}

\title{Hypernatural numbers as ultrafilters}

\author{Mauro Di Nasso}
\address{Dipartimento di Matematica,
Universit\`{a} di Pisa, Italy.} \email{dinasso@dm.unipi.it}

\subjclass[2000]{03H05, 03E05, 54D80}

\keywords{Nonstandard analysis, Ultrafilters, Algebra on $\beta\N$.}

\begin{abstract}
In this paper we present a use of nonstandard methods 
in the theory of ultrafilters and in related
applications to combinatorics of numbers.
\end{abstract}

\maketitle

\section{Introduction.}

Ultrafilters are really peculiar and multifaced mathematical objects, whose
study turned out a fascinating and often elusive subject. Researchers may
have diverse intuitions about ultrafilters, but they seem to agree on the
subtlety of this concept; \emph{e.g.}, read the following quotations: \emph{%
``The space $\beta\omega$ is a monster having three heads"} (J. van Mill 
\cite{11vm}); \emph{``\ldots the somewhat esoteric, but fascinating and very
useful object $\beta\mathbb{N}$"} (V. Bergelson \cite{11ber}).

The notion of ultrafilter can be formulated in diverse languages of
mathematics: in set theory, ultrafilters are maximal families of sets that
are closed under supersets and intersections; in measure theory, they are
described as $\{0,1\}$-valued finitely additive measures defined on the
family of all subsets of a given space; in algebra, they exactly correspond
to maximal ideals in rings of functions $\mathbb{F}^I$ where
$I$ is a set and $\mathbb{F}$ is a field. 
Ultrafilters and the corresponding construction of ultraproduct are a
common tool in mathematical logic, but they also have many applications in
other fields of mathematics, most notably in topology (the notion of limit
along an ultrafilter, the Stone-\v{C}ech compactification $\beta X$ of a
discrete space $X$, \emph{etc}.), and in Banach spaces (the so-called \emph{%
ultraproduct technique}).

In 1975, F. Galvin and S. Glazer found a beautiful ultrafilter proof of 
\emph{Hindman's theorem}, namely the property that for every finite
partition of the natural numbers $\mathbb{N}=C_1\cup\ldots\cup C_r$, there
exists an infinite set $X$ and a piece $C_i$ such that all sums of distinct
elements from $X$ belong to $C_i$. Since this time, ultrafilters on $\mathbb{%
N}$ have been successfully used also in combinatorial number theory and in
Ramsey theory. The key fact is that the compact space $\beta\mathbb{N}$ of
ultrafilters on $\mathbb{N}$ can be equipped with a pseudo-sum operation, so
that the resulting structure $(\beta\mathbb{N},\oplus)$ is a compact
topological left semigroup. Such a space satisfies really intriguing
properties that have direct applications in the study of structural
properties of sets of integers (See the monography \cite{11hs}, where the
extensive research originated from that approach is surveyed.)

Nonstandard analysis and ultrafilters are intimately connected. In one
direction, ultrapowers are the basic ingredient for the usual constructions
of models of nonstandard analysis since W.A.J. Luxemburg's lecture notes 
\cite{11lux62} of 1962. Actually, by a classic result of H.J. Keisler, the
models of nonstandard analysis are characterized up to isomorphisms as \emph{%
limit ultrapowers}, a class of elementary submodels of ultrapowers which
correspond to direct limits of ultrapowers (see \cite{11ke63} and \cite[\S %
4.4]{11ck}).

In the other direction, the idea that elements of a nonstandard extension $%
{}^*X$ correspond to ultrafilters on $X$ goes back to the golden years of
nonstandard analysis, starting from the seminal paper \cite{11lux69} by
W.A.J. Luxemburg appeared in 1969. This idea was then systematically pursued
by C. Puritz in \cite{11pu1} and by G. Cherlin and J. Hirschfeld in \cite%
{11ch}. In those papers, as well as in Puritz' follow-up \cite{11pu2}, new
results about the Rudin-Keisler ordering were proved by nonstandard methods,
along with new characterizations of special ultrafilters, such as P-points
and selective ultrafilters. (See also \cite{11nr}, where the study of
similar properties as in Puritz' papers was continued.) In \cite{11bl}, A.
Blass pushed that approach further and provided a comprehensive treatment of
ultrafilter properties as reflected by the nonstandard numbers of the
associated ultrapowers.

Several years later, J. Hirschfeld \cite{11hi} showed that hypernatural
numbers can also be used as a convenient tool to investigate certain
Ramsey-like properties. In the last years, a new nonstandard technique based
on the use of iterated hyper-extensions has been developed to study
partition regularity of equations (see \cite{11dn-rado,11lu-pr}).

This paper aims at providing a self-contained introduction to a nonstandard
theory of ultrafilters; several examples are also included to illustrate the
use of such a theory in applications.

For gentle introductions to ultrafilters, see the papers \cite{11kt,11ga}; a
comprehensive reference is the monography \cite{11cn}. Recent surveys on
applications of ultrafilters across mathematics can be found in the book 
\cite{11nato}. As for nonstandard analysis, a short but rigorous
presentation can be found in \cite[\S 4.4]{11ck}; organic expositions
covering virtually all aspects of nonstandard methods are provided in the
books \cite{11nato,11lw,11lnl}. We remark that here we adopt the so-called 
\emph{external} approach, based on the existence of a \emph{star-map} $*$
that associates an \emph{hyper-extension} (or nonstandard extension) $%
{}^*\!A $ to each object $A$ under study, and satisfies the \emph{transfer
principle}. This is to be confronted to the \emph{internal} viewpoint as
formalized by E. Nelson's \emph{Internal Set Theory} \textsf{IST} or by K.
Hrb\'a\u{c}ek's \emph{Nonstandard Set Theories}. (See \cite{11kr} for a
thorough treatise of nonstandard set theories.)

Let us recall here the saturation property. A family $\mathcal{F}$ has the 
\emph{finite intersection property} (FIP for short) if $A_1\cap\ldots\cap
A_n\ne\emptyset$ for any choice of finitely many elements $A_i\in\mathcal{F}$%
.

\begin{definition}
\label{def-saturation} Let $\kappa$ be an infinite cardinal. A model of
nonstandard analysis is $\kappa$-\emph{saturated} if it satisfies the
property:

\begin{itemize}
\item Let $\mathcal{F}$ be a family of internal sets with cardinality $|%
\mathcal{F}|<\kappa$. If $\mathcal{F}$ has the FIP then $\bigcap_{A\in%
\mathcal{F}} A\ne\emptyset$.
\end{itemize}
\end{definition}

When $\kappa$-\emph{saturation} holds, then every infinite internal set $A$
has a cardinality $|A|\ge\kappa$. Indeed, the family of internal sets $%
\{A\setminus\{a\}\mid a\in A\}$ has the FIP, and has the same cardinality as 
$A$. If by contradiction $|A|<\kappa$, then by $\kappa$-\emph{saturation} we
would obtain $\bigcap_{a\in A}A\setminus\{a\}\ne\emptyset$, which is absurd.

With the exceptions of Sections \ref{sec-Hausdorff} and \ref{sec-regulargood}%
, throughout this paper we will work in a fixed $\mathfrak{c}^+$-saturated
model of nonstandard analysis, where $\mathfrak{c}$ is the cardinality of
the \emph{continuum}. (We recall that $\kappa^+$ denotes the successor
cardinal of $\kappa$. So, $\kappa^+$-saturation applies to families $|%
\mathcal{F}|\le\kappa$.) In consequence, our hypernatural numbers will have
cardinality $|{}^*\mathbb{N}|\ge\mathfrak{c}^+$.

\section{The $u$-equivalence}

\label{sec-uequivalence}

There is a canonical way of associating an ultrafilter on $\mathbb{N}$
to each hypernatural number.

\begin{definition}
The \emph{ultrafilter generated}%
\index{ultrafilter!generated} by a hypernatural number $\alpha%
\in{}^*\mathbb{N}$ is the family 
\begin{equation*}
\mathfrak{U}_\alpha=\{X\subseteq\mathbb{N}\mid \alpha\in{}^*\!X\}.
\end{equation*}
\end{definition}

It is easily verified that $\mathfrak{U}_\alpha$ actually satisfies the
properties of ultrafilter. Notice that $\mathfrak{U}_\alpha$ is principal if
and only if $\alpha\in\mathbb{N}$ is finite.

\begin{definition}
We say that $\alpha,\beta\in{}^*\mathbb{N}$ are $u$-\emph{equivalent}%
\index{u-equivalence}, and write $\alpha{\,{\sim}_{{}_{\!\!\!\!\! u}}\;}%
\beta $, if they generate the same ultrafilter, \emph{i.e.} $\mathfrak{U}%
_\alpha=\mathfrak{U}_\beta$. The equivalence classes $u(\alpha)=\{\beta\mid%
\beta{\,{\sim}_{{}_{\!\!\!\!\! u}}\;}\alpha\}$ are called $u$-\emph{monads}%
\index{u-monad}.
\end{definition}

Notice that $\alpha$ and $\beta$ are $u$-equivalent if and only if they
cannot be separated by any hyper-extension, \emph{i.e.} if $\alpha%
\in{}^*\!X\Leftrightarrow\beta\in{}^*\!X$ for every $X\subseteq\mathbb{N}$.
In consequence, the equivalence classes $u(\alpha)$ are characterized as
follows: 
\begin{equation*}
u(\alpha)\ =\ \bigcap\,\{{}^*\!X\mid X\in\mathfrak{U}_\alpha\}.
\end{equation*}

(The notion of \emph{filter monad} $\mu(\mathcal{F})=\bigcap\{{}^*F\mid F\in%
\mathcal{F}\}$ of a filter $\mathcal{F}$ was first introduced by W.A.J.
Luxemburg in \cite{11lux69}.)

For every ultrafilter $\mathcal{U}$ on $\mathbb{N}$, 
the family $\{{}^*\!X\mid X\in\mathcal{U%
}\}$ is a family of cardinality $\mathfrak{c}$ with the FIP and so, by $%
\mathfrak{c}^+$-\emph{saturation}, there exist hypernatural numbers $\alpha%
\in{}^*\mathbb{N}$ such that $\mathfrak{U}_\alpha=\mathcal{U}$. (Actually,
the $\mathfrak{c}^+$-\emph{enlargement} property suffices: see Definition %
\ref{def-enlargement}.) In consequence, 
\begin{equation*}
\beta\mathbb{N}\ =\ \{\mathfrak{U}_\alpha\mid\alpha\in\mathbb{N}\}.
\end{equation*}
Thus one can identify $\beta\mathbb{N}$ with the quotient set ${}^*\mathbb{N}%
/\!{\,{\sim}_{{}_{\!\!\!\!\! u}}\;}$ of the $u$-monads.

\begin{example}
Let $f:\mathbb{N}\to\mathbb{R}$ be bounded. If $\alpha{\,{\sim}%
_{{}_{\!\!\!\!\! u}}\;}\beta$ are $u$-equivalent then ${}^*\!f(\alpha)%
\approx{}^*\!f(\beta)$ are at infinitesimal distance. \newline
To see this, for every real number $r\in\mathbb{R}$ consider the set 
\begin{equation*}
\Gamma(r)\ =\ \{n\in\mathbb{N}\mid f(n)< r\}.
\end{equation*}
Then, by the hypothesis, one has $\alpha\in{}^*\Gamma(r)\Leftrightarrow\beta%
\in{}^*\Gamma(r)$, \emph{i.e.} ${}^*\!f(\alpha)< r\Leftrightarrow{}%
^*\!f(\beta)< r$. As this holds for all $r\in\mathbb{R}$, it follows that
the bounded hyperreal numbers ${}^*\!f(\alpha)\approx{}^*\!f(\beta)$ are
infinitely close. \newline
(This example was suggested to the author by E. Gordon.)
\end{example}

\begin{proposition}
\label{ueq1} ${}^*\!f\left(u(\alpha)\right)=u\left({}^*\!f(\alpha)\right)$.
Indeed:

\begin{enumerate}
\item If $\alpha{\,{\sim}_{{}_{\!\!\!\!\! u}}\;}\beta$ then ${}^*\!f(\alpha){%
\,{\sim}_{{}_{\!\!\!\!\! u}}\;}{}^*\!f(\beta)$.

\item If ${}^*\!f(\alpha){\,{\sim}_{{}_{\!\!\!\!\! u}}\;}\gamma$ then $%
\gamma={}^*\!f(\beta)$ for some $\beta{\,{\sim}_{{}_{\!\!\!\!\! u}}\;}\alpha$%
.
\end{enumerate}
\end{proposition}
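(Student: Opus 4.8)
The plan is to get the two numbered statements first, since the displayed identity ${}^*\!f(u(\alpha))=u({}^*\!f(\alpha))$ is exactly their conjunction: a generic element of $u(\alpha)$ is a $\beta$ with $\beta\ueq\alpha$, and (1) says ${}^*\!f(\beta)\ueq{}^*\!f(\alpha)$, giving the inclusion ${}^*\!f(u(\alpha))\subseteq u({}^*\!f(\alpha))$; the reverse inclusion is precisely what (2) asserts. So the work is entirely in (1) and (2), and essentially only in (2).

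For (1) I would argue at the level of the generated ultrafilters. Fix $X\subseteq\N$. Since $n\in f^{-1}(X)\Leftrightarrow f(n)\in X$ for all $n\in\N$, transfer yields $\xi\in{}^*(f^{-1}(X))\Leftrightarrow{}^*\!f(\xi)\in{}^*\!X$ for all $\xi\in{}^*\N$. Applying this to $\xi=\alpha$ and $\xi=\beta$, and using that $\alpha\ueq\beta$ gives $\alpha\in{}^*(f^{-1}(X))\Leftrightarrow\beta\in{}^*(f^{-1}(X))$, we conclude ${}^*\!f(\alpha)\in{}^*\!X\Leftrightarrow{}^*\!f(\beta)\in{}^*\!X$. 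As $X$ is arbitrary, $\mathfrak{U}_{{}^*\!f(\alpha)}=\mathfrak{U}_{{}^*\!f(\beta)}$, i.e. ${}^*\!f(\alpha)\ueq{}^*\!f(\beta)$.

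For (2) — which I expect to be the only genuine obstacle — I would invoke saturation. Consider the family of internal sets
\[ \mathcal{F}\ =\ \{\,{}^*\!X\mid X\in\mathfrak{U}_\alpha\,\}\ \cup\ \{\,({}^*\!f)^{-1}(\{\gamma\})\,\}. \]
It has cardinality $\le\mathfrak{c}$, so by the ambient $\mathfrak{c}^+$-saturation it is enough to verify the FIP: then any $\beta\in\bigcap\mathcal{F}$ works, because $\beta\in{}^*\!X$ for all $X\in\mathfrak{U}_\alpha$ forces $\mathfrak{U}_\alpha\subseteq\mathfrak{U}_\beta$, hence $\mathfrak{U}_\alpha=\mathfrak{U}_\beta$ (both are ultrafilters), i.e. $\beta\ueq\alpha$, while $\beta\in({}^*\!f)^{-1}(\{\gamma\})$ says ${}^*\!f(\beta)=\gamma$. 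Since $\{{}^*\!X\mid X\in\mathfrak{U}_\alpha\}$ is already closed under finite intersections, the FIP reduces to showing ${}^*\!X\cap({}^*\!f)^{-1}(\{\gamma\})\ne\emptyset$ for each $X\in\mathfrak{U}_\alpha$, i.e. that $\gamma$ is attained by ${}^*\!f$ on ${}^*\!X$. This is where the hypothesis ${}^*\!f(\alpha)\ueq\gamma$ is used: from $\alpha\in{}^*\!X$ and the transfer identity $({}^*\!f)[{}^*\!X]={}^*(f[X])$ (transfer of the defining property of the image set) we get ${}^*\!f(\alpha)\in{}^*(f[X])$, so $f[X]\in\mathfrak{U}_{{}^*\!f(\alpha)}=\mathfrak{U}_\gamma$ and hence $\gamma\in{}^*(f[X])$; transferring the statement ``every element of $f[X]$ equals $f(n)$ for some $n\in X$'' then produces $\xi\in{}^*\!X$ with ${}^*\!f(\xi)=\gamma$, as required. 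The only delicate points are bookkeeping with transfer — writing down the correct first-order descriptions of $f^{-1}(X)$ and of $f[X]$ — together with the remark that the family $\mathcal{F}$ has size at most $\mathfrak{c}$, so that the fixed $\mathfrak{c}^+$-saturation is exactly strong enough.
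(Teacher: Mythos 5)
Your proposal is correct and follows essentially the same route as the paper: part (1) via the transfer equivalence ${}^*\!f(\xi)\in{}^*\!X\Leftrightarrow\xi\in{}^*(f^{-1}(X))$, and part (2) by verifying the FIP of the family $\{{}^*\!f^{-1}(\gamma)\cap{}^*\!X\mid X\in\mathfrak{U}_\alpha\}$ (your splitting off of $({}^*\!f)^{-1}(\{\gamma\})$ as a separate member is only a cosmetic repackaging) using $\gamma\in{}^*(f[X])$, and then applying $\mathfrak{c}^+$-saturation. No gaps.
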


\begin{proof}
$(1)$. For every $A\subseteq\mathbb{N}$, one has the following chain of
equivalences: 
\begin{equation*}
{}^*\!f(\alpha)\in{}^*\!A\ \Leftrightarrow\ \alpha\in{}^*\{n\mid f(n)\in
A\}\ \Leftrightarrow
\end{equation*}
\begin{equation*}
\Leftrightarrow\ \beta\in{}^*\{n\mid f(n)\in A\}\ \Leftrightarrow\
{}^*\!f(\beta)\in{}^*\!A.
\end{equation*}

$(2)$. For every $A\subseteq\mathbb{N}$, $\alpha\in{}^*\!A\Rightarrow{}%
^*\!f(\alpha)\in{}^*(f(A))\Leftrightarrow \gamma\in{}^*(f(A))$, \emph{i.e.} $%
\gamma={}^*\!f(\beta)$ for some $\beta\in{}^*\!A$. But then the family of
internal sets 
\begin{equation*}
\{{}^*\!f^{-1}(\gamma)\cap\,{}^*\!A\mid \alpha\in{}^*\!A\}
\end{equation*}
has the finite intersection property. By $\mathfrak{c}^+$-saturation, there
exists an element $\beta$ in the intersection of that family. Clearly, $%
{}^*\!f(\beta)=\gamma$ and $\beta{\,{\sim}_{{}_{\!\!\!\!\! u}}\;}\alpha$.
\end{proof}

Before starting to develop our nonstandard theory, let us consider a
well-known combinatorial property which constitutes a fundamental
preliminary step in the theory of ultrafilters. The proof given below
consists of two steps: we first show a finite version of the desired
property, and then use a non-principal ultrafilter to obtain the global
version. Although the result is well-known, this particular argument seems
to be new in the literature.

\begin{lemma}
\label{3-coloring} Let $f:\mathbb{N}\to\mathbb{N}$ be such that $f(n)\ne n$
for all $n$. Then there exists a 3-coloring $\chi:\mathbb{N}\to\{1,2,3\}$
such that $\chi(n)\ne\chi(f(n))$ for all $n$.
\end{lemma}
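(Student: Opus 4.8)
The plan is to argue in two stages, exactly the finite-to-infinite scheme announced before the statement: first prove a \emph{finite version} of the conclusion, and then glue the finite colorings together by a limit along a non-principal ultrafilter on $\N$. The finite version I would aim for is: for every finite $F\subseteq\N$ there is a map $\chi_F:\N\to\{1,2,3\}$ such that $\chi_F(n)\ne\chi_F(f(n))$ for all $n\in F$.

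To prove the finite version, fix a finite $F$ and consider the finite undirected simple graph $G_F$ with vertex set $F\cup f[F]$ and edge set $\{\,\{n,f(n)\}\mid n\in F\,\}$; since $f(n)\ne n$ for all $n$ these are genuine edges (no loops), and it clearly suffices to find a proper $3$-coloring of $G_F$ and then extend it arbitrarily to the rest of $\N$. The key structural remark is that each edge of $G_F$ arises from an element of $F$, so $G_F$ — and likewise every one of its connected components — has at most as many edges as vertices; being connected, such a component is then either a tree or has exactly one cycle. A tree is bipartite, hence $3$-colorable; and a connected graph with a unique cycle is $3$-colorable as well — if the cycle is even the whole component is still bipartite, while if the cycle is odd one colors it with the pattern $1,2,1,2,\dots,1,2,3$ and then extends greedily along the trees hanging off the cycle, each newly colored vertex having a single already-colored neighbour. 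This gives the desired $\chi_F$.

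For the gluing stage, let $\U$ be any non-principal ultrafilter on $\N$, and for each $N\in\N$ apply the finite version to $F=\{n\in\N\mid n\le N\}$ to obtain $\chi_N:\N\to\{1,2,3\}$ with $\chi_N(n)\ne\chi_N(f(n))$ whenever $n\le N$. Since the three sets $\{N\mid\chi_N(n)=c\}$ for $c\in\{1,2,3\}$ partition $\N$, for each $n$ exactly one color $c$ has $\{N\mid\chi_N(n)=c\}\in\U$, and we set $\chi(n):=c$. Now fix $n$: both $\{N\mid\chi_N(n)=\chi(n)\}$ and $\{N\mid\chi_N(f(n))=\chi(f(n))\}$ belong to $\U$, and so does the cofinite set $\{N\mid N\ge n\ \text{and}\ N\ge f(n)\}$ — here is where non-principality of $\U$ is used. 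Picking $N$ in the intersection of these three members of $\U$, we have $n\le N$, hence $\chi_N(n)\ne\chi_N(f(n))$, while $\chi_N(n)=\chi(n)$ and $\chi_N(f(n))=\chi(f(n))$ by the choice of $N$; therefore $\chi(n)\ne\chi(f(n))$, as required.

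I expect the combinatorial heart of the argument, and the only point calling for a genuine idea, to be the finite version — specifically the observation that every finite piece of the functional graph of $f$ is a pseudoforest together with the use of the third color to deal with odd cycles; the ultrafilter limit is then a routine compactness manoeuvre.
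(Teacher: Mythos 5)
Your proof is correct, and it follows the same two-stage scheme the paper announces and uses: first a finite version, then a limit along a non-principal ultrafilter. The gluing stage is essentially identical to the paper's (the paper builds the condition $n\ge k$ into the sets $\Gamma_i(k)$ whose membership in $\mathcal{U}$ defines $\chi(k)$, whereas you invoke non-principality separately to put the cofinite set $\{N\mid N\ge n,\ N\ge f(n)\}$ into $\mathcal{U}$; these are the same move). Where you genuinely diverge is in the proof of the finite version. The paper argues by induction on $|F|$: by pigeonhole there is an $\overline{x}\in F$ with at most one $f$-preimage in $F$, one removes it, colors $F\setminus\{\overline{x}\}$ inductively, and extends, using that $\overline{x}$ has at most two already-colored neighbours (its image and its at most one preimage) -- in graph-theoretic language, this is exactly the greedy coloring of a $2$-degenerate graph. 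You instead decompose the functional graph on $F\cup f[F]$ structurally: each component has at most as many edges as vertices, hence is a tree or unicyclic, and you color odd cycles explicitly with the pattern $1,2,\ldots,1,2,3$ before extending greedily outward. Both arguments are sound; the paper's is shorter and more self-contained, while yours makes visible \emph{why} three colors are needed (odd cycles in the functional graph) and that two would not suffice in general. Your finite version is also marginally stronger than the paper's (you demand $\chi_F(n)\ne\chi_F(f(n))$ for all $n\in F$ rather than only when both $n,f(n)\in F$), which is what lets your gluing step get by with $N\ge n$ alone; this is a harmless strengthening. Like the paper's proof, your argument transfers verbatim to functions $f:I\to I$ on arbitrary infinite sets.
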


\begin{proof}
We begin by showing the following ``finite approximation" to the desired
result.

\begin{itemize}
\item \emph{For every finite $F\subset\mathbb{N}$ there exists $%
\chi_F:F\to\{1,2,3\}$ such that $\chi_F(x)\ne\chi_F(f(x))$ whenever both $x$
and $f(x)$ belong to $F$. }
\end{itemize}

We proceed by induction on the cardinality of $F$. The basis is trivial,
because if $|F|=1$ then it is never the case that both $x,f(x)\in F$. For
the inductive step, notice that by the \emph{pigeonhole principle} there
must be at least one element $\overline{x}\in F$ which is the image under $f$
of at most one element in $F$, \emph{i.e.} $|\{y\in F\mid f(y)=\overline{x}%
\}|\le 1$. Now let $F^{\prime }=F\setminus\{\overline{x}\}$ and let $%
\chi^{\prime }:F^{\prime }\to\{1,2,3\}$ be a 3-coloring as given by the
inductive hypothesis. We want to extend $\chi^{\prime }$ to a 3-coloring $%
\chi$ of $F$. To this end, define $\chi(\overline{x})$ in such a way that $%
\chi(\overline{x})\ne\chi^{\prime }(f(\overline{x}))$ if $f(\overline{x})\in
F$, and $\chi(\overline{x})\ne\chi^{\prime }(y)$ if $f(y)=\overline{x}$.
This is always possible because there is at most one such element $y$, and
because we have 3 colors at disposal.

We now have to glue together the finite 3-colorings so as to obtain a
3-coloring of the whole set $\mathbb{N}$. (Of course, this cannot be done
directly, because two 3-colorings do not necessarily agree on the
intersection of their domains.) One possible way is the following. For every 
$n\in\mathbb{N}$, fix a 3-coloring $\chi_n:\{1,\ldots,n\}\to\{1,2,3\}$ such
that $\chi_n(x)\ne\chi_n(f(x))$ whenever both $x,f(x)\in\{1,\ldots,n\}$.
Then pick any non-principal ultrafilter $\mathcal{U}$ on $\mathbb{N}$ and
define the map $\chi:\mathbb{N}\to\{1,2,3\}$ by putting 
\begin{equation*}
\chi(k)=i\ \Longleftrightarrow\ \Gamma_i(k)=\{n\ge k\mid \chi_n(k)=i\}\in%
\mathcal{U}.
\end{equation*}
The definition is well-posed because for every $k$ the disjoint union 
\begin{equation*}
\Gamma_1(k)\cup\Gamma_2(k)\cup\Gamma_3(k)=\{n\in\mathbb{N}\mid n\ge k\}\in%
\mathcal{U},
\end{equation*}
and so exactly one set $\Gamma_i(k)$ belongs to $\mathcal{U}$. The function $%
\chi$ is the desired 3-coloring. In fact, if by contradiction $%
\chi(k)=\chi(f(k))=i$ for some $k$, then we could pick $n\in\Gamma_i(k)\cap%
\Gamma_i(f(k))\in\mathcal{U}$ and have $\chi_n(k)=\chi_n(f(k))$, against the
hypothesis on $\chi_n$. (The same argument could be used to extend this
lemma to functions $f:I\to I$ over arbitrary infinite sets $I$.)
\end{proof}

\begin{remark}
The second part of the above proof could also be easily carried out by using
nonstandard methods. Indeed, by \emph{saturation} one can pick a hyperfinite
set $H\subset{}^*\mathbb{N}$ containing all (finite) natural numbers. By 
\emph{transfer} from the ``finite approximation" result proved above, there
exists an internal 3-coloring $\Phi:H\to\{1,2,3\}$ such that $%
\Phi(\xi)\ne\Phi({}^*\!f(\xi))$ whenever both $\xi,{}^*\!f(\xi)\in H$. Then
the restriction $\chi=\Phi\!\!\upharpoonleft_\mathbb{N}:\mathbb{N}%
\to\{1,2,3\}$ gives the desired 3-coloring.
\end{remark}

As a corollary, we obtain the

\begin{theorem}
\label{u-identity} Let $f:\mathbb{N}\to\mathbb{N}$ and $\alpha\in{}^*\mathbb{%
N}$. If ${}^*\!f(\alpha){\,{\sim}_{{}_{\!\!\!\!\! u}}\;}\alpha$ then $%
{}^*\!f(\alpha)=\alpha$.
\end{theorem}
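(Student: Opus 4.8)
The plan is to prove the equivalent statement that the set $E=\{n\in\N\mid f(n)=n\}$ belongs to the generated ultrafilter $\UU_\alpha$. Once this is known, transfer of the (trivial) equivalence ``$n\in E\Leftrightarrow f(n)=n$'' gives at once $\hf(\alpha)=\alpha$. So suppose towards a contradiction that $E\notin\UU_\alpha$; equivalently, writing $D=\N\setminus E=\{n\mid f(n)\ne n\}$, we have $D\in\UU_\alpha$, i.e.\ $\alpha\in{}^*D$. (If $\alpha\in\N$ the conclusion is immediate since $\UU_\alpha$ is then principal; but the argument below does not need to distinguish cases.)

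The first step is to replace $f$ by a genuinely fixed-point-free function without disturbing the value $\hf(\alpha)$, so that Lemma~\ref{3-coloring} becomes applicable. Define $g:\N\to\N$ by letting $g$ agree with $f$ on $D$ and setting, say, $g(n)=n+1$ for $n\in E$; then $g(n)\ne n$ for every $n$. Since $g$ and $f$ agree on $D$ and $\alpha\in{}^*D$, transfer yields $\hg(\alpha)=\hf(\alpha)$; write $\beta$ for this common value, so that $\beta\ueq\alpha$ by hypothesis.

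Now apply Lemma~\ref{3-coloring} to $g$, obtaining $\chi:\N\to\{1,2,3\}$ with $\chi(n)\ne\chi(g(n))$ for all $n$, and put $C_i=\{n\in\N\mid\chi(n)=i\}$, so that $\N=C_1\cup C_2\cup C_3$ is a finite partition. Exactly one $C_i$ lies in the ultrafilter $\UU_\alpha$, and since $\UU_\alpha=\UU_\beta$ the same $C_i$ lies in $\UU_\beta$; hence $\alpha$ and $\beta$ both belong to ${}^*C_i$, and transfer of ``$\chi$ is constantly $i$ on $C_i$'' gives ${}^*\chi(\alpha)={}^*\chi(\beta)=i$. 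On the other hand, transfer of ``$\forall n\ \chi(n)\ne\chi(g(n))$'' applied at $\alpha$ gives ${}^*\chi(\alpha)\ne{}^*\chi(\hg(\alpha))={}^*\chi(\beta)$, a contradiction. Therefore $E\in\UU_\alpha$, and $\hf(\alpha)=\alpha$.

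I expect the only real obstacle to be conceptual rather than computational: spotting that one must first ``repair'' $f$ into a fixed-point-free map in order to invoke Lemma~\ref{3-coloring}, and then verifying the single delicate point that this modification leaves $\hf(\alpha)$ unchanged — which works precisely because the assumption $E\notin\UU_\alpha$ forces $\alpha$ into ${}^*D$, the set on which $f$ and $g$ coincide. The remaining ingredients are standard: the transfer principle, and the fact that an ultrafilter meets a finite partition in exactly one piece.
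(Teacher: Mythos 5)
Your proof is correct and follows essentially the same route as the paper's: modify $f$ into a fixed-point-free $g$ agreeing with $f$ on $\{n\mid f(n)\ne n\}$ (which contains $\alpha$ under the contradiction hypothesis), invoke Lemma~\ref{3-coloring}, and separate $\alpha$ from ${}^*\!f(\alpha)$ by a color class. The only difference is cosmetic: the paper argues by contraposition from ${}^*\!f(\alpha)\ne\alpha$, while you argue by contradiction from $\{n\mid f(n)=n\}\notin\mathfrak{U}_\alpha$.
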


\begin{proof}
If ${}^*\!f(\alpha)\ne\alpha$, then $\alpha\in{}^*\!A$ where $A=\{n\mid
f(n)\ne n\}$. Pick any function $g:\mathbb{N}\to\mathbb{N}$ that agrees with 
$f$ on $A$ and such that $g(n)\ne n$ for all $n\in\mathbb{N}$. Since $\alpha%
\in{}^*\!A\subseteq{}^*\{n\mid g(n)=f(n)\}$, we have that $%
{}^*\!g(\alpha)={}^*\!f(\alpha)$. Apply the previous theorem to $g$ and pick
a 3-coloring $\chi:\mathbb{N}\to\{1,2,3\}$ such that $\chi(n)\ne\chi(g(n))$
for all $n$. Then ${}^*\chi({}^*\!f(\alpha))={}^*\chi({}^*\!g(\alpha))\ne{}%
^*\chi(\alpha)$. Now let $X=\{n\in\mathbb{N}\mid\chi(n)=i\}$ where $%
i={}^*\chi(\alpha)$. Clearly, $\alpha\in{}^*\!X$ but ${}^*\!f(\alpha)\notin{}%
^*\!X$, and hence ${}^*\!f(\alpha){\,{\not\sim}_{{}_{\!\!\!\! u}}\,}\alpha$.
\end{proof}

Two important properties of $u$-equivalence are the following.

\begin{proposition}
Let $\alpha\in{}^*\!A$, and let $f$ be 1-1 when restricted to $A$. Then

\begin{enumerate}
\item There exists a bijection $\varphi$ such that ${}^*\!f(\alpha)={}^*\!%
\varphi(\alpha)$\,;

\item For every $g:\mathbb{N}\to\mathbb{N}$, ${}^*\!f(\alpha){\,{\sim}%
_{{}_{\!\!\!\!\! u}}\;}{}^*\!g(\alpha)\Rightarrow
{}^*\!f(\alpha)={}^*\!g(\alpha)$.
\end{enumerate}
\end{proposition}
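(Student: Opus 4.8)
The plan is to prove (1) first and then derive (2) from it together with Theorem~\ref{u-identity} and Proposition~\ref{ueq1}. For (1), recall that for functions $f,\varphi\colon\mathbb N\to\mathbb N$ one has ${}^*\!f(\alpha)={}^*\!\varphi(\alpha)$ exactly when $\{n\mid f(n)=\varphi(n)\}\in\mathfrak{U}_\alpha$, so it suffices to produce a bijection $\varphi$ of $\mathbb N$ that agrees with $f$ on some $B\subseteq A$ with $B\in\mathfrak{U}_\alpha$. If $\alpha=m\in\mathbb N$ is finite this is immediate: take $\varphi$ to be the identity when $f(m)=m$, and the transposition of $m$ and $f(m)$ otherwise. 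So assume $\alpha$ is infinite; then $\mathfrak{U}_\alpha$ is non-principal and $A$ is infinite. The crux is to find $B\subseteq A$, $B\in\mathfrak{U}_\alpha$, with both $\mathbb N\setminus B$ and $\mathbb N\setminus f(B)$ infinite: for such a $B$ the map $f\!\restriction\! B$ is a bijection onto $f(B)$ (injectivity of $f$ on $A\supseteq B$), and extending it by an arbitrary bijection $\mathbb N\setminus B\to\mathbb N\setminus f(B)$ gives a bijection $\varphi$ of $\mathbb N$ agreeing with $f$ on $B$.

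To construct $B$, first split $A=A_0\sqcup A_1$ into two infinite sets; exactly one of them, say $A_0$, lies in $\mathfrak{U}_\alpha$, and then $\mathbb N\setminus A_0\supseteq A_1$ is infinite. Next observe that ${}^*\!f(\alpha)$ must itself be infinite: were ${}^*\!f(\alpha)=m$, the set $\{n\mid f(n)=m\}$ would belong to the non-principal ultrafilter $\mathfrak{U}_\alpha$ and hence meet $A$ in an infinite set, contradicting injectivity of $f$ on $A$. Thus $\mathfrak{U}_{{}^*\!f(\alpha)}$ is non-principal, and since ${}^*\!f(\alpha)\in{}^*(f(A_0))$ we may split $f(A_0)=D_0\sqcup D_1$ into two infinite sets with $D_0\in\mathfrak{U}_{{}^*\!f(\alpha)}$. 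Put $B=A_0\cap f^{-1}(D_0)$. Then transfer gives $\alpha\in{}^*\!B$, so $B\in\mathfrak{U}_\alpha$; one checks $f(B)=D_0$; and $\mathbb N\setminus B\supseteq A_1$ together with $\mathbb N\setminus f(B)=\mathbb N\setminus D_0\supseteq D_1$ are both infinite, as wanted. I expect the only real friction to lie in these routine verifications — that ${}^*\!f(\alpha)$ is infinite, and that $f(B)$ equals $D_0$ rather than just being contained in it.

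For (2), use (1) to write ${}^*\!f(\alpha)={}^*\!\varphi(\alpha)$ with $\varphi$ a bijection of $\mathbb N$, so that ${}^*\!\varphi(\alpha)\ueq{}^*\!g(\alpha)$. Applying Proposition~\ref{ueq1}(1) to the bijection $\varphi^{-1}$, the relation ${}^*\!\varphi(\alpha)\ueq{}^*\!g(\alpha)$ gives ${}^*\!(\varphi^{-1})({}^*\!\varphi(\alpha))\ueq{}^*\!(\varphi^{-1})({}^*\!g(\alpha))$, that is $\alpha\ueq{}^*\!(\varphi^{-1}\circ g)(\alpha)$, using that $*$ commutes with composition and $\varphi^{-1}\circ\varphi=\mathrm{id}$. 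Now Theorem~\ref{u-identity}, applied to the function $\varphi^{-1}\circ g$, forces ${}^*\!(\varphi^{-1})({}^*\!g(\alpha))={}^*\!(\varphi^{-1}\circ g)(\alpha)=\alpha={}^*\!(\varphi^{-1})({}^*\!f(\alpha))$. Finally, since $\varphi^{-1}$ is a bijection of $\mathbb N$, its hyper-extension ${}^*\!(\varphi^{-1})$ is injective on ${}^*\mathbb N$ by transfer, so cancelling it yields ${}^*\!g(\alpha)={}^*\!f(\alpha)$.
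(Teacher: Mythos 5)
Your proof is correct and follows essentially the same route as the paper: split $A$ into two infinite pieces, extend $f$ restricted to the piece containing $\alpha$ to a bijection of $\mathbb{N}$, then derive (2) by composing with $\varphi^{-1}$ and invoking Theorem~\ref{u-identity}. The only real difference is that your additional splitting of $f(A_0)$ into $D_0\sqcup D_1$ is superfluous: since $f$ is injective on $A$, the set $f(A_1)$ is already an infinite subset of $\mathbb{N}\setminus f(A_0)$, so one may take $B=A_0$ directly, which is what the paper does.
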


\begin{proof}
$(1)$. We can assume that $\alpha\in{}^*\mathbb{N}\setminus\mathbb{N}$
infinite, as otherwise the thesis is trivial. Then $\alpha\in{}^*\!A$
implies that $A$ is infinite, and so we can partition $A=B\cup C$ into two
disjoint infinite sets $B$ and $C$ where, say, $\alpha\in{}^*B$. Since $f$
is 1-1, we can pick a bijection $\varphi$ that agrees with $f$ on $B$, so
that ${}^*\!\varphi(\alpha)={}^*\!f(\alpha)$ as desired.

$(2)$. By the previous point, ${}^*\!f(\alpha)={}^*\!\varphi(\alpha)$ for
some bijection $\varphi$. Then 
\begin{equation*}
{}^*\!g(\alpha){\,{\sim}_{{}_{\!\!\!\!\! u}}\;}{}^*\!\varphi(\alpha)\
\Rightarrow\ {}^*\!\varphi^{-1}({}^*\!g(\alpha)){\,{\sim}_{{}_{\!\!\!\!\!
u}}\;} {}^*\!\varphi^{-1}({}^*\!\varphi(\alpha))=\alpha\ \Rightarrow\
{}^*\!\varphi^{-1}({}^*\!g(\alpha))=\alpha,
\end{equation*}
and hence ${}^*\!g(\alpha)={}^*\!\varphi(\alpha)={}^*\!f(\alpha)$.
\end{proof}

We remark that property (2) of the above proposition
does not hold if we drop the hypothesis that $f$ is 1-1. (In Section \ref%
{sec-Hausdorff} we shall address the question of the existence of infinite
points $\alpha\in{}^*\mathbb{N}$ with the property that ${}^*\!f(\alpha){\,{%
\sim}_{{}_{\!\!\!\!\! u}}\;}{}^*\!g(\alpha)\Rightarrow{}^*\!f(\alpha)={}^*%
\!g(\alpha)$ for all $f,g:\mathbb{N}\to\mathbb{N}$.)

\begin{proposition}
\label{ueq2} If ${}^*\!f(\alpha){\,{\sim}_{{}_{\!\!\!\!\! u}}\;}\beta$ and $%
{}^*\!g(\beta){\,{\sim}_{{}_{\!\!\!\!\! u}}\;}\alpha$ for suitable $f$ and $%
g $, then ${}^*\!\varphi(\alpha){\,{\sim}_{{}_{\!\!\!\!\! u}}\;}\beta$ for
some bijection $\varphi$.
\end{proposition}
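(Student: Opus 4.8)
The plan is to reduce the statement to Theorem~\ref{u-identity} together with the preceding proposition on functions that are injective on a set in the generated ultrafilter. First I would set $\gamma={}^*\!f(\alpha)$, so that the first hypothesis reads $\gamma\ueq\beta$. Applying Proposition~\ref{ueq1}(1) with the function $g$ in place of $f$, from $\gamma\ueq\beta$ I obtain ${}^*\!g(\gamma)\ueq{}^*\!g(\beta)$; combining this with the second hypothesis ${}^*\!g(\beta)\ueq\alpha$ and the transitivity of $u$-equivalence yields ${}^*\!g(\gamma)\ueq\alpha$. Since ${}^*\!g(\gamma)={}^*\!g\bigl({}^*\!f(\alpha)\bigr)={}^*(g\circ f)(\alpha)$, this says precisely that ${}^*(g\circ f)(\alpha)\ueq\alpha$.

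Now I would invoke Theorem~\ref{u-identity} applied to the map $g\circ f\colon\mathbb{N}\to\mathbb{N}$, which forces ${}^*(g\circ f)(\alpha)=\alpha$. Consequently $\alpha\in{}^*\!A$, where $A=\{n\in\mathbb{N}\mid g(f(n))=n\}$. On the set $A$ the map $f$ has $g$ as a left inverse, hence $f$ is $1$-$1$ when restricted to $A$. I can therefore apply the previous proposition (the one asserting that if $\alpha\in{}^*\!A$ and $f$ is $1$-$1$ on $A$, then ${}^*\!f(\alpha)={}^*\!\varphi(\alpha)$ for some bijection $\varphi$) to produce a bijection $\varphi$ with ${}^*\!\varphi(\alpha)={}^*\!f(\alpha)$. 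Since ${}^*\!f(\alpha)=\gamma\ueq\beta$, this gives ${}^*\!\varphi(\alpha)\ueq\beta$, which is the desired conclusion.

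There is no genuinely hard step here; the argument is a nonstandard rendering of the classical fact that two ultrafilters each Rudin--Keisler below the other are isomorphic via a bijection. The only points requiring care are checking that the hypotheses of Theorem~\ref{u-identity} are literally met — namely that $g\circ f$ is a function $\mathbb{N}\to\mathbb{N}$ and that the $u$-equivalence ${}^*(g\circ f)(\alpha)\ueq\alpha$ has really been established — and then correctly identifying the set $A$ on which $f$ is injective, so that the preceding proposition applies verbatim.
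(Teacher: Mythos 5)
Your proof is correct and follows essentially the same route as the paper's: deduce ${}^*(g\circ f)(\alpha)\ueq\alpha$, apply Theorem~\ref{u-identity} to get ${}^*(g\circ f)(\alpha)=\alpha$, observe that $f$ is $1$-$1$ on $A=\{n\mid g(f(n))=n\}$ with $\alpha\in{}^*\!A$, and invoke the preceding proposition to replace $f$ by a bijection $\varphi$. You simply make explicit the appeals to Proposition~\ref{ueq1}(1) and to transfer (for ${}^*\!g({}^*\!f(\alpha))={}^*(g\circ f)(\alpha)$) that the paper leaves implicit.
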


\begin{proof}
By the hypotheses, ${}^*\!g({}^*\!f(\alpha)){\,{\sim}_{{}_{\!\!\!\!\! u}}\;}%
{}^*\!g(\beta){\,{\sim}_{{}_{\!\!\!\!\! u}}\;}\alpha$ and so $%
{}^*\!g({}^*\!f(\alpha))=\alpha$. If $A=\{n\mid g(f(n))=n\}$, then $\alpha%
\in{}^*\!A$ and $f$ is 1-1 on $A$. By the previous proposition, there exists
a bijection $\varphi$ such that ${}^*\!f(\alpha)={}^*\!\varphi(\alpha)$, and
hence ${}^*\!\varphi(\alpha){\,{\sim}_{{}_{\!\!\!\!\! u}}\;}\beta$.
\end{proof}

We recall that the \emph{image} of an ultrafilter $\mathcal{U}$ under a
function $f:\mathbb{N}\to\mathbb{N}$ is the ultrafilter 
\begin{equation*}
f(\mathcal{U})\ =\ \{A\subseteq\mathbb{N}\mid f^{-1}(A)\in\mathcal{U}\}.
\end{equation*}
Notice that if $f\equiv_\mathcal{U} g$, \emph{i.e.} if $\{n\mid
f(n)=g(n)\}\in\mathcal{U}$, then $f(\mathcal{U})=g(\mathcal{U})$.

\begin{proposition}
For every $f:\mathbb{N}\to\mathbb{N}$ and $\alpha\in{}^*\mathbb{N}$, the
image ultrafilter $f(\mathfrak{U}_\alpha)=\mathfrak{U}_{{}^*\!f(\alpha)}$.
\end{proposition}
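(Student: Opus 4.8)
The plan is to unwind both sides to the same membership condition. Fix $f : \N \to \N$ and $\alpha \in \hN$, and show that a set $A \subseteq \N$ belongs to $f(\UU_\alpha)$ if and only if it belongs to $\UU_{\hf(\alpha)}$. The right-hand condition is $\hf(\alpha) \in \hA$, by definition of the ultrafilter generated by $\hf(\alpha)$. The left-hand condition is $f^{-1}(A) \in \UU_\alpha$, which by definition of $\UU_\alpha$ means $\alpha \in {}^*(f^{-1}(A))$. So the whole proposition reduces to the identity $\alpha \in {}^*(f^{-1}(A)) \Leftrightarrow \hf(\alpha) \in \hA$.

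First I would record the set-theoretic fact $f^{-1}(A) = \{ n \in \N \mid f(n) \in A \}$, a subset of $\N$. Then by \emph{transfer} applied to the first-order statement ``for all $n$, $n \in f^{-1}(A) \leftrightarrow f(n) \in A$'' one gets ${}^*(f^{-1}(A)) = \{ \xi \in \hN \mid \hf(\xi) \in \hA \}$. Instantiating at $\xi = \alpha$ gives exactly $\alpha \in {}^*(f^{-1}(A)) \Leftrightarrow \hf(\alpha) \in \hA$, which is the desired equivalence. Chaining this with the two definitional equivalences above yields $A \in f(\UU_\alpha) \Leftrightarrow A \in \UU_{\hf(\alpha)}$ for every $A$, hence $f(\UU_\alpha) = \UU_{\hf(\alpha)}$.

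This is essentially the same transfer computation already used in the proof of Proposition~\ref{ueq1}(1), where the chain ${}^*\!f(\alpha)\in{}^*\!A \Leftrightarrow \alpha\in{}^*\{n\mid f(n)\in A\}$ appears verbatim; so one could alternatively just cite that line. There is no real obstacle here: the only thing to be careful about is keeping the bookkeeping straight between the three notions involved — the image ultrafilter $f(\mathcal U)$ defined via preimages, the generated ultrafilter $\UU_\beta$ defined via hyper-extensions, and the transfer of the preimage set — but each step is a one-line unfolding of a definition or a direct application of transfer. No saturation is needed for this statement (unlike for part (2) of Proposition~\ref{ueq1}).
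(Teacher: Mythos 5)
Your proof is correct and is essentially the same as the paper's: both unwind the two definitions and reduce everything to the single transfer equivalence $\alpha\in{}^*(f^{-1}(A))\Leftrightarrow{}^*\!f(\alpha)\in{}^*\!A$, which the paper presents as one chain of equivalences. Your extra remarks (the overlap with Proposition \ref{ueq1}(1) and the fact that no saturation is needed) are accurate but do not change the argument.
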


\begin{proof}
For every $A\subseteq\mathbb{N}$, one has the chain of equivalences: 
\begin{equation*}
A\in\mathfrak{U}_{{}^*\!f(\alpha)}\ \Leftrightarrow\ {}^*\!f(\alpha)\in{}%
^*\!A\ \Leftrightarrow\ \alpha\in{}^*(f^{-1}(A))\ \Leftrightarrow
\end{equation*}
\begin{equation*}
\Leftrightarrow\ f^{-1}(A)\in\mathfrak{U}_\alpha\ \Leftrightarrow\ A\in f(%
\mathfrak{U}_\alpha).
\end{equation*}
\end{proof}

Let us now show how the above results about $u$-equivalence are just
reformulation in a nonstandard context of fundamental properties of
ultrafilter theory.

\begin{theorem}
Let $f:\mathbb{N}\to\mathbb{N}$ and let $\mathcal{U}$ be an ultrafilter on $%
\mathbb{N}$. If $f(\mathcal{U})=\mathcal{U}$ then $\{n\mid f(n)=n\}\in%
\mathcal{U}$.
\end{theorem}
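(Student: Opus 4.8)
The plan is to derive this purely ultrafilter-theoretic statement from the nonstandard machinery developed above, specifically from Theorem~\ref{u-identity}. First I would pick, using $\mathfrak{c}^+$-saturation, a hypernatural number $\alpha\in{}^*\mathbb{N}$ that generates $\mathcal{U}$, that is, with $\mathfrak{U}_\alpha=\mathcal{U}$; such an $\alpha$ exists by the discussion following the definition of the $u$-equivalence. Then the preceding proposition gives $\mathfrak{U}_{{}^*\!f(\alpha)}=f(\mathfrak{U}_\alpha)=f(\mathcal{U})=\mathcal{U}=\mathfrak{U}_\alpha$, which says precisely that ${}^*\!f(\alpha)\ueq\alpha$.

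Now Theorem~\ref{u-identity} applies directly and yields ${}^*\!f(\alpha)=\alpha$. The final step is to translate this equality back into a statement about $\mathcal{U}$: let $E=\{n\in\mathbb{N}\mid f(n)=n\}$. Then $\alpha\in{}^*E$ because the internal property ``${}^*\!f(\xi)=\xi$'' holds at $\xi=\alpha$, and by transfer ${}^*E=\{\xi\in{}^*\mathbb{N}\mid {}^*\!f(\xi)=\xi\}$. Hence $E\in\mathfrak{U}_\alpha=\mathcal{U}$, which is exactly the desired conclusion $\{n\mid f(n)=n\}\in\mathcal{U}$.

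There is essentially no obstacle here: the substantive combinatorial content (the $3$-coloring lemma) has already been absorbed into Theorem~\ref{u-identity}, and this corollary is just the dictionary between generated ultrafilters and hypernatural representatives applied twice — once to go from $f(\mathcal{U})=\mathcal{U}$ to ${}^*\!f(\alpha)\ueq\alpha$, and once to go from ${}^*\!f(\alpha)=\alpha$ back to $\{n\mid f(n)=n\}\in\mathcal{U}$. The only point requiring a moment's care is ensuring that the representative $\alpha$ is chosen once and used consistently in both directions, and recalling that $\mathfrak{U}_\alpha=\mathfrak{U}_\beta$ is literally the definition of $\alpha\ueq\beta$, so that the hypothesis of Theorem~\ref{u-identity} is met verbatim.
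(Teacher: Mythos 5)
Your proposal is correct and follows essentially the same route as the paper: choose a generator $\alpha$ of $\mathcal{U}$, use $f(\mathfrak{U}_\alpha)=\mathfrak{U}_{{}^*\!f(\alpha)}$ to get ${}^*\!f(\alpha)\ueq\alpha$, invoke Theorem \ref{u-identity} to conclude ${}^*\!f(\alpha)=\alpha$, and transfer back to $\{n\mid f(n)=n\}\in\mathcal{U}$. No gaps.
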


\begin{proof}
Let $\alpha\in{}^*\mathbb{N}$ be such that $\mathcal{U}=\mathfrak{U}_\alpha$%
. By the hypothesis, $\mathfrak{U}_\alpha=f(\mathfrak{U}_\alpha)=\mathfrak{U}%
_{\,{}^*\!f(\alpha)}$, \emph{i.e.} $\alpha{\,{\sim}_{{}_{\!\!\!\!\! u}}\;}%
{}^*\!f(\alpha)$ and so, by the previous theorem, ${}^*\!f(\alpha)=\alpha$.
But then $\{n\mid f(n)=n\}\in\mathcal{U}$ because $\alpha\in{}^*\{n\mid
f(n)=n\}$.
\end{proof}

Recall the \emph{Rudin-Keisler pre-ordering}%
\index{Rudin-Keisler pre-ordering} $\le_{RK}$ on ultrafilters: 
\begin{equation*}
\mathcal{V}\le_{RK}\mathcal{U}\ \Longleftrightarrow\ f(\mathcal{U})=\mathcal{%
V}\ \ 
\text{for some function }f.
\end{equation*}

In this case, we say that $\mathcal{V}$ is \emph{Rudin-Keisler below} $%
\mathcal{U}$ (or $\mathcal{U}$ is \emph{Rudin-Keisler above} $\mathcal{V}$).
It is readily verified that $g(f(\mathcal{U}))=(g\circ f)(\mathcal{U})$, so $%
\le_{RK}$ satisfies the transitivity property, and $\le_{RK}$ is actually a
pre-ordering. Notice that $\mathfrak{U}_\alpha\le_{RK}\mathfrak{U}_\beta$
means that ${}^*\!f(\beta){\,{\sim}_{{}_{\!\!\!\!\! u}}\;}\alpha$ for some
function $f$.

\begin{proposition}
$\mathcal{U}\le_{RK}\mathcal{V}$ and $\mathcal{V}\le_{RK}\mathcal{U}$ if and
only if $\mathcal{U}\cong \mathcal{V}$ are isomorphic, \emph{i.e.} there
exists a bijection $\varphi:\mathbb{N}\to\mathbb{N}$ such that $\varphi(%
\mathcal{U})=\mathcal{V}$.
\end{proposition}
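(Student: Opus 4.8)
The plan is to deduce this from Proposition~\ref{ueq2}, using the dictionary ``$f(\mathfrak{U}_\alpha)=\mathfrak{U}_{{}^*\!f(\alpha)}$'' between image ultrafilters and hyper-extensions.

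The direction ``$\Leftarrow$'' is immediate: if $\varphi$ is a bijection with $\varphi(\mathcal{U})=\mathcal{V}$, then $\mathcal{V}\le_{RK}\mathcal{U}$ holds by the very definition of $\le_{RK}$, and since $\varphi^{-1}$ is again a function (indeed a bijection) with $\varphi^{-1}(\mathcal{V})=\mathcal{U}$, also $\mathcal{U}\le_{RK}\mathcal{V}$.

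For the direction ``$\Rightarrow$'', assume $f(\mathcal{V})=\mathcal{U}$ and $g(\mathcal{U})=\mathcal{V}$ for suitable $f,g:\mathbb{N}\to\mathbb{N}$. By $\mathfrak{c}^+$-saturation we may fix $\alpha,\beta\in{}^*\mathbb{N}$ with $\mathfrak{U}_\alpha=\mathcal{U}$ and $\mathfrak{U}_\beta=\mathcal{V}$. Then $\mathfrak{U}_{{}^*\!g(\alpha)}=g(\mathfrak{U}_\alpha)=\mathcal{V}=\mathfrak{U}_\beta$, i.e. ${}^*\!g(\alpha)\ueq\beta$; symmetrically $\mathfrak{U}_{{}^*\!f(\beta)}=f(\mathfrak{U}_\beta)=\mathcal{U}=\mathfrak{U}_\alpha$, i.e. ${}^*\!f(\beta)\ueq\alpha$. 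These are precisely the hypotheses of Proposition~\ref{ueq2} (with the roles of $f$ and $g$ interchanged), so there is a bijection $\varphi$ with ${}^*\!\varphi(\alpha)\ueq\beta$. Translating back through the image-ultrafilter identity, $\varphi(\mathcal{U})=\varphi(\mathfrak{U}_\alpha)=\mathfrak{U}_{{}^*\!\varphi(\alpha)}=\mathfrak{U}_\beta=\mathcal{V}$, as required.

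The only point needing a little care is the bookkeeping: keeping straight which of $f,g$ plays which role when invoking Proposition~\ref{ueq2}, and making sure both $\mathcal{U}$ and $\mathcal{V}$ are simultaneously realized as generated ultrafilters $\mathfrak{U}_\alpha,\mathfrak{U}_\beta$ in our fixed model. Beyond that there is no real obstacle, since Proposition~\ref{ueq2} already encapsulates the substantive work (and it in turn rests on Theorem~\ref{u-identity} together with the $1$-$1$ case handled just before it).
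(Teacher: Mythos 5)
Your proof is correct and follows essentially the same route as the paper: realize $\mathcal{U}=\mathfrak{U}_\alpha$ and $\mathcal{V}=\mathfrak{U}_\beta$ via saturation, translate the two Rudin--Keisler inequalities into the hypotheses of Proposition~\ref{ueq2}, and convert the resulting bijection back through the identity $\varphi(\mathfrak{U}_\alpha)=\mathfrak{U}_{{}^*\!\varphi(\alpha)}$. The only difference is that you spell out the trivial converse and the $f$/$g$ bookkeeping, which the paper leaves implicit.
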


\begin{proof}
Let $\mathcal{U}=\mathfrak{U}_\alpha$ and $\mathcal{V}=\mathfrak{U}_\beta$.
If $\mathcal{U}\le_{RK}\mathcal{V}$ and $\mathcal{V}\le_{RK}\mathcal{U}$,
then there exist functions $f,g:\mathbb{N}\to\mathbb{N}$ such that $%
{}^*\!f(\alpha){\,{\sim}_{{}_{\!\!\!\!\! u}}\;}\beta$ and ${}^*\!g(\beta){\,{%
\sim}_{{}_{\!\!\!\!\! u}}\;}\alpha$. But then, by Proposition \ref{ueq2},
there exists a bijection $\varphi:\mathbb{N}\to\mathbb{N}$ such that $%
{}^*\!\varphi(\alpha){\,{\sim}_{{}_{\!\!\!\!\! u}}\;}\beta$, and hence $%
\varphi(\mathcal{U})=\mathfrak{U}_{{}^*\!\varphi(\alpha)}=\mathfrak{U}_\beta=%
\mathcal{V}$, as desired. The other implication is trivial.
\end{proof}

We close this section by showing that all infinite numbers $\alpha$ have
``large" and ``spaced" $u$-monads, in the sense that $u(\alpha)$ is both a
left and a right unbounded subset of the infinite numbers ${}^*\mathbb{N}%
\setminus\mathbb{N}$, and that different elements of $u(\alpha)$ are placed
at infinite distance. (The property of $\mathfrak{c}^+$-\emph{saturation} is
essential here.)

\begin{theorem}
\label{monads}\cite{11pu1,11pu2} Let $\alpha\in{}^*\mathbb{N}\setminus%
\mathbb{N}$ be infinite. Then:

\begin{enumerate}
\item For every $\xi\in{}^*\mathbb{N}$, there exists an internal 1-1 map $%
\varphi:{}^*\mathbb{N}\to u(\alpha)\cap(\xi,+\infty)$. In consequence, the
set $u(\alpha)\cap(\xi,+\infty)$ contains $|{}^*\mathbb{N}|$-many elements
and it is unbounded in ${}^*\mathbb{N}$.

\item For every infinite $\xi\in{}^*\mathbb{N}\setminus\mathbb{N}$, the set $%
u(\alpha)\cap[0,\xi)$ contains at least $\mathfrak{c}^+$-many elements. In
consequence, $u(\alpha)$ is unbounded leftward in ${}^*\mathbb{N}\setminus%
\mathbb{N}$.

\item If $\alpha{\,{\sim}_{{}_{\!\!\!\!\! u}}\;}\beta$ and $\alpha\ne\beta$,
then the distance $|\alpha-\beta|\in{}^*\mathbb{N}\setminus\mathbb{N}$ is
infinite.
\end{enumerate}
\end{theorem}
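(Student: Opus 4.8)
The plan is to prove the three parts separately, each one exploiting $\mathfrak{c}^+$-saturation against a carefully chosen family of internal sets, together with the basic fact (used repeatedly in the paper) that $u(\alpha)=\bigcap\{{}^*\!X\mid X\in\mathfrak{U}_\alpha\}$ and that the latter is a family of internal sets of cardinality $\mathfrak{c}$ with the FIP.

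For part $(1)$, fix $\xi\in{}^*\mathbb{N}$. The idea is to transfer a statement about finite sets: for every $X\in\mathfrak{U}_\alpha$ the set $X$ is infinite, so for every $n\in\mathbb{N}$ there is a $1$-$1$ map from $\{1,\dots,n\}$ into $X$ whose range consists of elements exceeding any prescribed bound. By saturation one produces, for the internal set $H={}^*\!X$ (any fixed $X\in\mathfrak{U}_\alpha$) an internal $1$-$1$ map $\psi:{}^*\mathbb{N}\to{}^*\!X\cap(\xi,+\infty)$; more carefully one wants the range to land inside $u(\alpha)$. Here is the cleaner route: the family of internal sets $\{{}^*\!X\setminus[0,\xi]\mid X\in\mathfrak{U}_\alpha\}$ has the FIP (each ${}^*\!X$ is infinite, hence unbounded by the remark after Definition \ref{def-saturation}, since infinite internal sets have cardinality $\ge\mathfrak{c}^+$), so one can consider the internal-injection problem. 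The slick formulation: apply saturation to the family of internal sets asserting ``there is an internal $1$-$1$ map $g:\{1,\dots,\nu\}\to{}^*\!X\cap(\xi,+\infty)$'' for all finite approximations, obtaining by overspill/saturation a single internal $1$-$1$ map $\varphi:{}^*\mathbb{N}\to\bigcap\{{}^*\!X\mid X\in\mathfrak{U}_\alpha\}\cap(\xi,+\infty)=u(\alpha)\cap(\xi,+\infty)$. Since $\varphi$ is internal and $1$-$1$ on ${}^*\mathbb{N}$, its range has cardinality $|{}^*\mathbb{N}|$ and, being internal and infinite, is unbounded in ${}^*\mathbb{N}$.

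For part $(2)$, fix an infinite $\xi$. Now I would directly verify the FIP of the family
\begin{equation*}
\mathcal{G}\ =\ \{\,{}^*\!X\cap[0,\xi)\mid X\in\mathfrak{U}_\alpha\,\}\ \cup\ \{\,{}^*\mathbb{N}\setminus\{\eta\}\mid \eta\in S\,\}
\end{equation*}
where $S$ ranges over any set of fewer than $\mathfrak{c}^+$ previously constructed witnesses; iterating this $\mathfrak{c}^+$ times (a transfinite construction) yields $\mathfrak{c}^+$-many distinct elements of $\bigcap\{{}^*\!X\mid X\in\mathfrak{U}_\alpha\}\cap[0,\xi)=u(\alpha)\cap[0,\xi)$. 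The only point to check is that each finite subfamily is nonempty: given $X_1,\dots,X_k\in\mathfrak{U}_\alpha$ and finitely many excluded points, $X_1\cap\dots\cap X_k\in\mathfrak{U}_\alpha$ so ${}^*(X_1\cap\dots\cap X_k)\cap[0,\xi)$ is a nonempty internal set (it contains $\alpha$? no—$\alpha$ need not be below $\xi$); rather, $X_1\cap\dots\cap X_k$ is infinite, hence ${}^*(X_1\cap\dots\cap X_k)$ meets $[0,\xi)$ because $\xi$ is infinite (an infinite standard set $Y$ has arbitrarily large finite elements, so by transfer ${}^*\!Y$ has elements between $n$ and $\xi$ for every $n\in\mathbb{N}$, hence elements in $[0,\xi)$ beyond any standard bound), and removing finitely many points keeps it nonempty since the set is infinite. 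Thus each stage of the transfinite recursion succeeds, and leftward unboundedness of $u(\alpha)$ in ${}^*\mathbb{N}\setminus\mathbb{N}$ follows because every infinite $\xi$ has a $u(\alpha)$-element strictly below it.

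For part $(3)$, suppose $\alpha\ueq\beta$ with $\alpha\ne\beta$ but $|\alpha-\beta|$ finite, say $|\alpha-\beta|=k\in\mathbb{N}$ with $k\ge 1$; without loss of generality $\beta=\alpha+k$ (the case $\beta=\alpha-k$ is symmetric). Consider the function $f:\mathbb{N}\to\mathbb{N}$ defined by $f(n)=n+k$ for all $n$ in a set computing the residue appropriately—actually the clean choice is $f(n)=n+k$, so that ${}^*\!f(\alpha)=\alpha+k=\beta\ueq\alpha$. Since $f(n)\ne n$ for all $n$, Theorem \ref{u-identity} gives ${}^*\!f(\alpha)=\alpha$, i.e. $\beta=\alpha$, contradicting $\alpha\ne\beta$. (For $\beta=\alpha-k$ use $f(n)=n-k$ for $n>k$ and, say, $f(n)=n+1$ for $n\le k$, which still never fixes a point; then ${}^*\!f(\alpha)=\alpha-k=\beta$ since $\alpha>k$, and again Theorem \ref{u-identity} forces $\beta=\alpha$.) Hence $|\alpha-\beta|$ is infinite.

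The main obstacle is part $(1)$: producing an \emph{internal} injection whose entire range lies inside the external set $u(\alpha)$ requires packaging the ``$X$ is infinite'' facts for all $\mathfrak{c}$-many $X\in\mathfrak{U}_\alpha$ into a single saturation instance. The trick is to phrase, for each $X\in\mathfrak{U}_\alpha$ and each $n\in\mathbb{N}$, the internal set $D_{X,n}$ of all internal $1$-$1$ functions from $\{1,\dots,n\}$ into ${}^*\!X\cap(\xi,+\infty)$, check these have the FIP (a finite intersection corresponds to mapping into ${}^*(X_1\cap\dots\cap X_j)\cap(\xi,+\infty)$, nonempty since that standard set is infinite hence its star is unbounded past $\xi$—using again that infinite internal sets have size $\ge\mathfrak{c}^+$), and then realize the whole family $\{D_{X,n}\}$ of size $\mathfrak{c}$ by $\mathfrak{c}^+$-saturation to get one internal function defined on an initial segment of ${}^*\mathbb{N}$ of hyperfinite length $\nu>\mathbb{N}$; composing with a bijection ${}^*\mathbb{N}\to\{1,\dots,\nu\}$ (available internally since $\nu$ is infinite) finishes it. Everything else is routine bookkeeping.
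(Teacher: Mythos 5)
Your parts (2) and (3) are correct but take genuinely different routes from the paper. For (2) you run a transfinite recursion of length $\mathfrak{c}^+$, at each stage adjoining the co-singletons of the previously found witnesses to the family $\{{}^*\!X\cap[0,\xi)\mid X\in\mathfrak{U}_\alpha\}$ and invoking $\mathfrak{c}^+$-saturation again; your FIP check (the standard copy of an infinite $X$ already lies in $[0,\xi)$, and deleting finitely many points leaves it nonempty) is fine. The paper instead extracts a single witness $\zeta$, deduces leftward unboundedness, and then counts: the family of intervals $\{(k,\zeta)\mid k\in\mathbb{N},\ \zeta\in u(\alpha)\cap[0,\xi)\}$ has the FIP but empty intersection, so under $\mathfrak{c}^+$-saturation it must have cardinality $\ge\mathfrak{c}^+$. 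Your recursion is more pedestrian but buys the same conclusion. For (3) you reduce to Theorem \ref{u-identity} via the shift $f(n)=n+k$ (with a fixed-point-free modification for the downward case); since Theorem \ref{u-identity} precedes Theorem \ref{monads} there is no circularity. The paper argues instead with congruences: $\alpha\ueq\beta$ forces $\alpha\equiv\beta\ (\mathrm{mod}\ n)$ for every $n$, hence $n$ divides $|\alpha-\beta|$ for all $n$. Your route leans on the 3-coloring machinery behind Theorem \ref{u-identity}; the paper's is self-contained and more elementary.

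Part (1) is where your write-up breaks, even though the strategy (transfer the existence of 1-1 maps into $X\cap(k,+\infty)$, then saturate over $X\in\mathfrak{U}_\alpha$) is exactly the paper's. Two steps in your final packaging fail. First, the family $\{D_{X,n}\}$, with $D_{X,n}$ the set of internal 1-1 functions whose domain is exactly $\{1,\dots,n\}$, does not have the FIP: $D_{X,n}\cap D_{Y,m}=\emptyset$ whenever $n\ne m$, because a function cannot have two different domains. Second, and more seriously, there is no internal bijection --- nor even an internal injection --- from ${}^*\mathbb{N}$ into $\{1,\dots,\nu\}$: transferring ``for every $n$ there is no 1-1 map of $\mathbb{N}$ into $\{1,\dots,n\}$'' rules it out, or note that such a bijection would make ${}^*\mathbb{N}$ hyperfinite and hence possessed of a maximum element. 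So the composition at the end cannot be carried out, and a function with hyperfinite domain does not deliver the internal 1-1 map with domain ${}^*\mathbb{N}$ that the statement literally demands (nor, without further argument, a range of cardinality $|{}^*\mathbb{N}|$). The repair is the paper's: for each $X\in\mathfrak{U}_\alpha$ transfer ``for every $k\in\mathbb{N}$ there is a 1-1 map $f:\mathbb{N}\to X\cap(k,+\infty)$'' to conclude that $\Gamma(X)=\{\varphi:{}^*\mathbb{N}\to{}^*\!X\cap(\xi,+\infty)\mid\varphi\ \text{internal 1-1}\}$ is nonempty; then $\Gamma(X_1)\cap\cdots\cap\Gamma(X_n)=\Gamma(X_1\cap\cdots\cap X_n)$ gives the FIP, and a single application of $\mathfrak{c}^+$-saturation to $\{\Gamma(X)\mid X\in\mathfrak{U}_\alpha\}$ produces $\varphi$ with domain ${}^*\mathbb{N}$ in one stroke. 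Your ``slick formulation'' in mid-paragraph gestures at exactly this, but the argument you actually execute does not go through as written.
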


\begin{proof}
(1).\ Since $\alpha$ is infinite, every $X\in\mathfrak{U}_\alpha$ is an
infinite set and so for each $k\in\mathbb{N}$ there exists a 1-1 function $f:%
\mathbb{N}\to X\cap(k,+\infty)$. By \emph{transfer}, for every $\xi\in{}^*%
\mathbb{N}$ the following internal set is non-empty: 
\begin{equation*}
\Gamma(X)=\{\,\varphi:{}^*\mathbb{N}\to{}^*\!X\cap(\xi,+\infty)\mid \varphi 
\text{\ internal 1-1}\,\}.
\end{equation*}
Notice that $\Gamma(X_1)\cap\cdots\cap\Gamma(X_n)=$ $\Gamma(X_1\cap\cdots%
\cap X_n)$, and hence the family $\{\,\Gamma(X)\mid X\in\mathfrak{U}%
_\alpha\} $ has the finite intersection property. By $\mathfrak{c}^+$-\emph{%
saturation}, we can pick $\varphi\in\bigcap_{X\in\mathfrak{U}%
_\alpha}\Gamma(X)$. Clearly, $\text{range}(\varphi)$ is an internal subset
of $u(\alpha)\cap(\xi,+\infty)$ with the same cardinality as ${}^*\mathbb{N}$%
. Since $\text{range}(\varphi)$ is internal and hyperinfinite, it is
necessarily unbounded in ${}^*\mathbb{N}$.

(2).\ For any given $\xi\in{}^*\mathbb{N}\setminus\mathbb{N}$, the family $%
\{{}^*\!X\cap[0,\xi)\mid X\in\mathfrak{U}_\alpha\}$ is closed under finite
intersections, and all its elements are non-empty. So, by $\mathfrak{c}^+$-%
\emph{saturation}, there exists 
\begin{equation*}
\zeta\in\bigcap_{X\in\mathfrak{U}_\alpha}{}^*\!X\cap[0,\xi).
\end{equation*}
Clearly $\zeta\in u(\alpha)\cap[0,\xi)$, and this shows that $u(\alpha)$ is
unbounded leftward in ${}^*\mathbb{N}\setminus\mathbb{N}$. Now fix $\xi$
infinite. By what we have just proved, the family of open intervals 
\begin{equation*}
\mathcal{G}=\{(k,\zeta)\mid k\in\mathbb{N} \text{ and } \zeta\in
u(\alpha)\cap[0,\xi)\}
\end{equation*}
has empty intersection. Since $\mathcal{G}$ satisfies the finite
intersection property, and $\mathfrak{c}^+$-\emph{saturation} holds, it must
be $|\mathcal{G}|\ge\mathfrak{c}^+$, and hence also $|u(\alpha)\cap[0,\xi)|%
\ge\mathfrak{c}^+$.

(3).\ For every $n\ge 2$, let $k_n$ be the remainder of the Euclidean
division of $\alpha$ by $n$, and consider the set $X_n=\{x\cdot n+k_n\mid
x\in\mathbb{N}\}$. Then $\alpha\in{}^*\!X_n$ and $\alpha{\,{\sim}%
_{{}_{\!\!\!\!\! u}}\;}\beta$ implies that also $\beta\in{}^*\!X_n$, so $%
\alpha-\beta$ is a multiple of $n$. Since $\beta\ne\alpha$, it must be $%
|\alpha-\beta|\ge n$. As this is true for all $n\ge 2$, we conclude that $%
\alpha$ and $\beta$ have infinite distance.
\end{proof}

\section{Hausdorff S-topologies and Hausdorff ultrafilters}

\label{sec-Hausdorff}

It is natural to ask about properties of the \emph{ultrafilter map}%
\index{ultrafilter map}: 
\begin{equation*}
\mathfrak{U}:{}^*\mathbb{N}\to\beta\mathbb{N}\quad%
\text{where}\quad \mathfrak{U}:\alpha\mapsto\mathfrak{U}_\alpha.
\end{equation*}
We already noticed that if one assumes $\mathfrak{c}^+$-\emph{saturation}
then $\mathfrak{U}$ is onto $\beta\mathbb{N}$, \emph{i.e.} every ultrafilter
on $\mathbb{N}$ is of the form $\mathfrak{U}_\alpha$ for a suitable $\alpha%
\in{}^*\mathbb{N}$. However, in this section no saturation property will be
assumed.

As a first (negative) result, let us show that the ultrafilter map is never
a bijection.

\begin{proposition}
In any model of nonstandard analysis, if the ultrafilter map $\mathfrak{U}%
:{}^*\mathbb{N}\twoheadrightarrow\beta\mathbb{N}$ is onto then, for every
non-principal $\mathcal{U}\in\beta\mathbb{N}$, the set $\{\alpha\in{}^*%
\mathbb{N}\mid\mathfrak{U}_\alpha=\mathcal{U}\}$ contains at least $%
\mathfrak{c}$-many elements.
\end{proposition}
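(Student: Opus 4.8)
The plan is to show that if $\mathfrak{U}_\alpha = \mathcal{U}$ for some $\alpha$, then we can produce continuum-many distinct hypernatural numbers all generating $\mathcal{U}$, using only the fact that $\mathfrak{U}$ is onto (so no saturation is needed beyond what guarantees surjectivity, which we are assuming directly). The key idea is to encode $\mathfrak{c}$-many ``independent'' pieces of data into a single hypernatural number and then strip that data off in $\mathfrak{c}$-many ways without disturbing which ultrafilter is generated.

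Here is the approach in more detail. Fix a non-principal $\mathcal{U}$ and, by surjectivity, pick $\alpha$ with $\mathfrak{U}_\alpha = \mathcal{U}$. Since $\mathcal{U}$ is non-principal, $\alpha$ is infinite. First I would use a standard bijective coding $\mathbb{N} \to \mathbb{N} \times \mathbb{N}$, or more conveniently $\mathbb{N} \to \mathbb{N}^{<\omega}$ or $\mathbb{N} \to \prod$ of something; the cleanest is to identify $\mathbb{N}$ (via a fixed bijection $p$) with $\mathbb{N} \times \{0,1\}^{\mathbb{N}}_{\mathrm{fin}}$... actually the slick route: consider the function $f:\mathbb{N}\to\mathbb{N}$ coming from a bijection $\mathbb{N}\cong\mathbb{N}\times\mathbb{N}$, projecting onto the first coordinate, so each fiber $f^{-1}(m)$ is infinite. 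Set $\beta = {}^*\!f(\alpha)$; then $\mathfrak{U}_\beta = f(\mathcal{U})$. The point is that for each real-indexed choice of a ``branch'', one gets a different preimage of the relevant value, and these preimages all generate the same ultrafilter $\mathcal{U}$ provided the fiber structure is respected. Concretely, I would argue: there are $\mathfrak{c}$-many functions $g:\mathbb{N}\to\mathbb{N}$ which agree with each other on no set in $\mathcal{U}$ yet each satisfies ${}^*\!g({}^*\!f(\alpha)) \in u(\alpha)$, i.e. ${}^*\!g(\beta)\ueq\alpha$; distinctness of the ${}^*\!g(\beta)$ then follows because $g \not\equiv_{f(\mathcal{U})} g'$, so ${}^*\!g(\beta)\ne{}^*\!g'(\beta)$. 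This uses the Proposition identifying $f(\mathfrak{U}_\alpha)$ with $\mathfrak{U}_{{}^*\!f(\alpha)}$, and the elementary fact (already noted in the excerpt) that $g\equiv_{\mathcal{V}}g'$ implies $g(\mathcal{V})=g'(\mathcal{V})$, used in its contrapositive/pointwise form.

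The mechanism for producing $\mathfrak{c}$-many such $g$: fix an almost disjoint family $\{A_r : r\in\mathbb{R}\}$ of infinite subsets of $\mathbb{N}$ (pairwise intersections finite) — these exist in ZFC, $\mathfrak{c}$-many of them. Using the surjection $f$ with infinite fibers and a fixed bijection of each fiber $f^{-1}(m)$ with $\mathbb{N}$, for each $r$ define $g_r$ on the fiber over $m$ so that $g_r$ ``reads off'' the value $m$ after first moving within the fiber according to $A_r$; arrange that $g_r \circ f = \mathrm{id}$ on a cofinite set, hence on a set in $\mathcal{U}$, so ${}^*(g_r\circ f)(\alpha)=\alpha$ and thus ${}^*\!g_r(\beta)\ueq\alpha$, giving $\mathfrak{U}_{{}^*\!g_r(\beta)}=\mathcal{U}$. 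Meanwhile, for $r\ne r'$, the set $\{n : g_r(n)=g_{r'}(n)\}$ is controlled by $A_r\cap A_{r'}$ plus bookkeeping and turns out not to lie in $f(\mathcal{U})=\mathfrak{U}_\beta$, whence ${}^*\!g_r(\beta)\ne{}^*\!g_{r'}(\beta)$. This yields $\mathfrak{c}$-many distinct $\alpha' := {}^*\!g_r(\beta)$ with $\mathfrak{U}_{\alpha'}=\mathcal{U}$.

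The main obstacle I anticipate is the bookkeeping in the last step: arranging the family $\{g_r\}$ so that simultaneously (a) each $g_r$ inverts $f$ off a finite set — this is what forces ${}^*\!g_r(\beta)\ueq\alpha$ rather than merely mapping into $u(\alpha)$ — and (b) the coincidence sets $\{n:g_r(n)=g_{r'}(n)\}$ are small enough (not in $\mathfrak{U}_\beta$) to force the outputs to be genuinely distinct hypernaturals. Getting both properties from one construction requires choosing $f$'s fibers and the perturbations encoded by the almost disjoint family to interact correctly; a possible simplification is to first reduce to the case $\mathcal{U}=f(\mathcal{U})$ is itself a projection ultrafilter on a product, or to replace the almost-disjoint-family argument by a direct diagonalization producing, for each of $\mathfrak{c}$-many branches through a tree, a distinct $\beta'\in u(\alpha)$ that additionally satisfies ${}^*\!h(\beta')=\alpha$ for a fixed $h$. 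I would fall back to the cleaner formulation: it suffices to exhibit an injection from $\mathbb{R}$ into $u(\alpha)$, and since $u(\alpha)=\bigcap\{{}^*\!X : X\in\mathcal{U}\}$ with each ${}^*\!X$ of size $\ge\mathfrak{c}$ when $X$ is infinite, one shows the intersection itself has size $\ge\mathfrak{c}$ by a rank/saturation-free counting argument internal to the proof that $\mathfrak{U}$ is onto — but absent saturation this last route is exactly the delicate point, so the almost-disjoint-family-with-$f$ construction above is the one I would commit to.
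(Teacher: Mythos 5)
There is a genuine gap, and it is not the bookkeeping you flag at the end: the two requirements you place on the family $\{g_r\}$ are mutually exclusive. If $g_r\circ f=\mathrm{id}$ on a cofinite set $C_r$ (or on any set of $\mathcal{U}$), then $\alpha\in{}^*C_r$ gives ${}^*\!g_r(\beta)={}^*\!g_r({}^*\!f(\alpha))={}^*(g_r\circ f)(\alpha)=\alpha$ \emph{exactly}, for every $r$; so all your candidate elements coincide with $\alpha$. Seen from the other side, $g_r$ and $g_{r'}$ agree on $f(C_r\cap C_{r'})$, whose $f$-preimage contains the cofinite set $C_r\cap C_{r'}\in\mathcal{U}$, so $\{m\mid g_r(m)=g_{r'}(m)\}\in f(\mathcal{U})=\mathfrak{U}_\beta$ and hence ${}^*\!g_r(\beta)={}^*\!g_{r'}(\beta)$, no matter how the almost disjoint family is threaded through the fibers. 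The obstruction is structural rather than combinatorial: every element you build has the form ${}^*h(\alpha)$ for a standard $h=g_r\circ f:\mathbb{N}\to\mathbb{N}$, and Theorem \ref{u-identity} says that ${}^*h(\alpha){\,{\sim}_{{}_{\!\!\!\!\! u}}\;}\alpha$ forces ${}^*h(\alpha)=\alpha$. So no scheme that pushes the single point $\alpha$ around by standard functions can produce even a second element of $u(\alpha)$.

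The missing idea is to apply the surjectivity hypothesis a second time, to an auxiliary ultrafilter rather than to $\mathcal{U}$ itself. The paper fixes a bijection $\text{Fin}(\mathbb{N})\cong\mathbb{N}$ and uses it to transfer the filter generated by the sets $\Lambda(X,k)=\{F\in\text{Fin}(\mathbb{N})\mid F\subset X,\ |F|\ge k\}$ (for $X\in\mathcal{U}$, $k\in\mathbb{N}$) to a filter on $\mathbb{N}$; extending to an ultrafilter $\mathcal{V}$ and taking a generator $\beta$ of $\mathcal{V}$, one decodes $\beta$ into a single \emph{hyperfinite} set $G$ with $G\subseteq{}^*\!X$ for all $X\in\mathcal{U}$ and $|G|\ge k$ for all finite $k$. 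Thus $G$ is an infinite internal subset of $\{\gamma\mid\mathfrak{U}_\gamma=\mathcal{U}\}$, and the general fact that every infinite internal set has cardinality at least $\mathfrak{c}$ (via the injection $r\mapsto\min\{1\le i\le\nu\mid r\le i/\nu\}$ of $[0,1]$ into a hyperfinite initial segment) finishes the proof. Your fallback remark correctly identifies that bounding the size of $u(\alpha)$ from below without saturation is the delicate point; this coding of finite sets into a second application of surjectivity is precisely how the paper gets around it, and some such new input is unavoidable here.
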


\begin{proof}
Given a non-principal ultrafilter $\mathcal{U}$ on $\mathbb{N}$, for $X\in%
\mathcal{U}$ and $k\in\mathbb{N}$ let 
\begin{equation*}
\Lambda(X,k)\ =\ \left\{F\in\text{Fin}(\mathbb{N})\mid F\subset X\ \&\
|F|\ge k\right\},
\end{equation*}
where we denoted by $\text{Fin}(\mathbb{N})=\{F\subset\mathbb{N}\mid F\ 
\text{is finite}\}$. Notice that the family of sets $\mathcal{F}%
=\{\Lambda(X,k)\mid X\in\mathcal{U}\,,\,k\in\mathbb{N}\}$ has the finite
intersection property. Indeed, $\Lambda(X_1,k_1)\cap\ldots\cap%
\Lambda(X_m,k_m)=\Lambda(X,k)$ where $X=X_1\cap\ldots\cap X_m\in\mathcal{U}$
and $k=\max\{k_1,\ldots,k_m\}$; and every set $\Lambda(X,k)\ne\emptyset$
since all $X\in\mathcal{U}$ are infinite. Now fix a bijection $\Phi:\text{Fin%
}(\mathbb{N})\to\mathbb{N}$, and let 
\begin{equation*}
\Gamma(X,k)\ =\ \{\Psi(F)\mid F\in\Lambda(X,k)\}.
\end{equation*}
Then also the family $\{\Gamma(X,k)\mid X\in\mathcal{U}\,,\,k\in\mathbb{N}%
\}\subseteq\mathcal{P}(\mathbb{N})$ has the FIP, and so we can extend it to
an ultrafilter $\mathcal{V}$ on $\mathbb{N}$. By the hypothesis on the
ultrafilter map there exists $\beta\in{}^*\mathbb{N}$ such that $\mathfrak{U}%
_\beta=\mathcal{V}$; in particular, $\beta\in\bigcap_{X,k}{}^*\Gamma(X,k)$,
and so $\beta={}^*\!\left(\Psi(G)\right)$ for a suitable $%
G\in\bigcap_{X,k}{}^*\Lambda(X,k)$. Then $G\subseteq{}^*\!X$ for all $X\in%
\mathcal{U}$, and hence $\mathfrak{U}_\gamma=\mathcal{U}$ for all $\gamma\in
G$. Moreover, $|G|\ge k$ for all $k\in\mathbb{N}$, and so $G$ is an infinite
internal set. Finally, we use the following general fact: ``Every infinite
internal set has at least the cardinality of the continuum". To prove this
last property, notice that if $A$ is infinite and internal then there exists
a (internal) 1-1 map $f:\{1,\ldots,\nu\}\to A$ for some infinite $\nu\in{}^*%
\mathbb{N}\setminus\mathbb{N}$. Now, consider the unit real interval $[0,1]$
and define $\Psi:[0,1]\to\{1,\ldots,\nu\}$ by putting $\Psi(r)=\min\{1\le
i\le\nu\mid r\le i/\nu\}$. The map $\Psi$ is 1-1 because $%
\Psi(r)=\Psi(r^{\prime })\Rightarrow |r-r^{\prime }|\le 1/\nu\approx
0\Rightarrow r=r^{\prime }$, and so we conclude that $\mathfrak{c}%
=|[0,1]|\le|\{1,\ldots,\nu\}|\le|A|$, as desired. (When $\mathfrak{c}^+$-%
\emph{saturation holds}, then $|\{\alpha\in{}^*\mathbb{N}\mid\mathfrak{U}%
_\alpha=\mathcal{U}\}|\ge\mathfrak{c}^+$ by Theorem \ref{monads}.)
\end{proof}

We now show that the ultrafilter map is tied up with a topology that is
naturally considered in a nonstandard setting. (The notion of $S$-topology
was introduced by A. Robinson himself, the ``inventor" of nonstandard
analysis.)

\begin{definition}
\label{S-topology} For every set $X$, the \emph{S-topology}%
\index{S-topology} on ${}^*\!X$ is the topology having the family $%
\{{}^*\!A\mid A\subseteq X\}$ as a basis of open sets.
\end{definition}

The capital letter ``S" stands for ``standard", and in fact hyper-extensions 
${}^*\!A$ are often called \emph{standard sets} in the literature of
nonstandard analysis. The adjective ``standard" originated from the
distinction between a \emph{standard} universe and a \emph{nonstandard}
universe, according to the most used approaches to nonstandard analysis.
However, such a distinction is not needed, and indeed one can adopt a
foundational framework where there is a single mathematical universe, and
take hyper-extensions of \emph{any} object under study (see, \emph{e.g.}, 
\cite{11bh}).

Every basic open set ${}^*\!A$ is also closed because ${}^*\!X%
\setminus{}^*\!A={}^*(X\setminus A)$, and so the $S$-topologies are totally
disconnected. A first relationship between $S$-topology and ultrafilter map
is the following.

\begin{proposition}
\label{ultramaponto} The $S$-topology on ${}^*\mathbb{N}$ is compact if and
only if the ultrafilter map $\mathfrak{U}:{}^*\mathbb{N}\to\beta\mathbb{N}$
is onto.
\end{proposition}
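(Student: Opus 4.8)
The plan is to prove the two implications separately, using the characterization of $u(\alpha)$ and the surjectivity discussion from Section \ref{sec-uequivalence}. The key dictionary is: the basic open sets of the $S$-topology are exactly the hyper-extensions ${}^*\!A$; the map $\mathfrak{U}$ sends $\alpha$ to the ultrafilter of all $A$ with $\alpha\in{}^*\!A$; and $\mathfrak{U}$ is onto precisely when every ultrafilter $\mathcal{U}$ on $\mathbb{N}$ ``is realized'', \emph{i.e.} $\bigcap_{A\in\mathcal{U}}{}^*\!A\ne\emptyset$.

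First I would prove the contrapositive of the ``onto $\Rightarrow$ compact'' direction (or rather prove it directly): assume $\mathfrak{U}$ is onto, and let $\{{}^*\!A_i\mid i\in I\}$ be a cover of ${}^*\mathbb{N}$ by basic open sets with no finite subcover. Then the complements $\{{}^*\mathbb{N}\setminus{}^*\!A_i\}=\{{}^*(\mathbb{N}\setminus A_i)\}$ have the finite intersection property, so the family $\{\mathbb{N}\setminus A_i\mid i\in I\}$ has the FIP in $\mathcal{P}(\mathbb{N})$ and extends to an ultrafilter $\mathcal{U}$. Since $\mathfrak{U}$ is onto, pick $\alpha$ with $\mathfrak{U}_\alpha=\mathcal{U}$; then $\alpha\in{}^*(\mathbb{N}\setminus A_i)$ for all $i$, so $\alpha$ is covered by no ${}^*\!A_i$, a contradiction. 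Hence every basic open cover has a finite subcover, and by the Alexander subbase lemma (or just because the basic sets are both open and closed, one can argue directly) the $S$-topology is compact. A small caveat: I should either invoke Alexander's subbase theorem explicitly, or note that since each ${}^*\!A$ is clopen, an arbitrary open cover can be refined to a basic open cover, so checking basic covers suffices.

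For the converse, assume the $S$-topology on ${}^*\mathbb{N}$ is compact and let $\mathcal{U}$ be any ultrafilter on $\mathbb{N}$; I want $\alpha$ with $\mathfrak{U}_\alpha=\mathcal{U}$, equivalently $\bigcap_{A\in\mathcal{U}}{}^*\!A\ne\emptyset$. Suppose not. The sets ${}^*\!A$ for $A\in\mathcal{U}$ are closed in the $S$-topology; their complements ${}^*(\mathbb{N}\setminus A)$ then form an open cover of ${}^*\mathbb{N}$. By compactness finitely many suffice: ${}^*\mathbb{N}={}^*(\mathbb{N}\setminus A_1)\cup\dots\cup{}^*(\mathbb{N}\setminus A_n)={}^*(\mathbb{N}\setminus(A_1\cap\dots\cap A_n))$. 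But $A_1\cap\dots\cap A_n\in\mathcal{U}$, so it is nonempty, so ${}^*(\mathbb{N}\setminus(A_1\cap\dots\cap A_n))={}^*\mathbb{N}\setminus{}^*(A_1\cap\dots\cap A_n)$ is a proper subset of ${}^*\mathbb{N}$, contradiction. Therefore $\bigcap_{A\in\mathcal{U}}{}^*\!A\ne\emptyset$, and any $\alpha$ in this intersection satisfies $\mathfrak{U}_\alpha=\mathcal{U}$; since $\mathcal{U}$ was arbitrary, $\mathfrak{U}$ is onto.

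I do not expect any serious obstacle here; the only point needing a word of care is the reduction, in the first direction, from arbitrary open covers to \emph{basic} open covers — this is exactly where the clopen-basis structure (or Alexander's subbase lemma) is used, and it is the sort of step that is easy to state but worth mentioning rather than glossing over.
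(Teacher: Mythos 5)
Your proposal is correct and is essentially the paper's own argument in dual form: the paper works directly with the finite-intersection-property characterization of compactness applied to families of closed hyper-extensions, while you phrase the same reasoning in terms of open covers and complements. The reduction you flag (from arbitrary open covers to basic ones) is exactly the dual of the paper's ``without loss of generality the closed family consists of hyper-extensions'' step, and both are justified in the same elementary way.
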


\begin{proof}
According to one of the equivalent definitions of compactness, the $S$%
-topology is compact if and only if every non-empty family $\mathcal{C}$ of
closed sets with the FIP has non-empty intersection $\bigcap_{C\in\mathcal{C}%
}C\ne\emptyset$. Without loss of generality, one can assume that $\mathcal{C}
$ is a family of hyper-extensions. Notice that $\mathcal{C}=\{{}^*\!A_i\mid
i\in I\}$ has the FIP if and only if $\mathcal{C}^{\prime }=\{A_i\mid i\in
I\}\subset\mathcal{P}(\mathbb{N})$ has the FIP, and so we can extend $%
\mathcal{C}^{\prime }$ to an ultrafilter $\mathcal{V}$ on $\mathbb{N}$. If
the ultrafilter map is onto $\beta\mathbb{N}$, then $\mathcal{V}=\mathfrak{U}%
_\alpha$ for a suitable $\alpha$, and therefore $\alpha\in\bigcap_{i\in
I}{}^*\!A_i\ne\emptyset$.

Conversely, if $\mathcal{U}$ is an ultrafilter on $\mathbb{N}$, then $%
\mathcal{C}=\{{}^*\!X\mid X\in\mathcal{U}\}$ is a family of closed sets with
the FIP. If $\alpha$ is any element in the intersection of $\mathcal{C}$,
then $\mathfrak{U}_\alpha=\mathcal{U}$.
\end{proof}

In consequence of the above proposition, the $S$-topology on ${}^*\mathbb{N}$
is compact when the $\mathfrak{c}^+$-\emph{saturation} property holds. More
generally, $\kappa$-\emph{saturation} implies that the $S$-topology is
compact on every hyper-extension ${}^*\!X$ where $2^{|X|}<\kappa$.
(Actually, the $\kappa$-\emph{enlarging} property suffices: see Definition %
\ref{def-enlargement}.)

A natural question that one may ask is whether the S-topologies are
Hausdorff or not. This depends on the considered model, and giving a
complete answer turns out to be a difficult issue involving deep
set-theoretic matters, which will be briefly discussed below. So, it is not
surprising that this simple question was not addressed explicitly in the
early literature in nonstandard analysis, despite the fact that the
S-topology was a common object of study.

As a first remark, notice that having a Hausdorff S-topology on ${}^*\!X$ is
preserved when passing to lower cardinalities.

\begin{proposition}
\label{Hausdorffbelow} If the S-topology is Hausdorff on ${}^*\!X$ and $%
|Y|\le|X|$, then the S-topology is Hausdorff on ${}^*Y$ as well.
\end{proposition}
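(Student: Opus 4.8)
The plan is to exploit the hypothesis $|Y|\le|X|$ to realize $\,{}^*Y$ as a ``slice'' of $\,{}^*\!X$, and then transport the separation property across. Fix an injection $j:Y\hookrightarrow X$. By transfer, ${}^*\!j:{}^*Y\to{}^*\!X$ is an injective internal map, so ${}^*Y$ can be identified with the internal set $R={}^*\!j({}^*Y)={}^*(j(Y))\subseteq{}^*\!X$; this $R$ is itself a basic open (hence closed) set in the $S$-topology on ${}^*\!X$, being of the form ${}^*\!A$ with $A=j(Y)$. The key observation is that ${}^*\!j$ is not merely injective but a \emph{homeomorphism} from $\,{}^*Y$ with its $S$-topology onto $R$ with the subspace $S$-topology: for every $B\subseteq Y$ we have ${}^*\!j({}^*B)={}^*(j(B))={}^*(j(Y)\cap j(B))\cap\text{(nothing else)}={}^*\!X$-basic-open $\cap\,R$, using that $j(B)={}^*(j(Y))\cap{}^*C$ pulls back correctly; more precisely, for $C\subseteq X$ one checks ${}^*\!j^{-1}({}^*C)={}^*(j^{-1}(C))$, which is a basic open set of $\,{}^*Y$, and conversely every basic open ${}^*B$ of $\,{}^*Y$ equals ${}^*\!j^{-1}({}^*C)$ for $C$ any set with $j^{-1}(C)=B$ (e.g.\ $C=j(B)$). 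So the basic open sets match up under ${}^*\!j$ exactly.

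With this identification in hand, the separation transfers immediately. Suppose $\alpha\ne\beta$ in $\,{}^*Y$. Then ${}^*\!j(\alpha)\ne{}^*\!j(\beta)$ are distinct points of $R\subseteq{}^*\!X$. Since the $S$-topology on $\,{}^*\!X$ is Hausdorff, there are disjoint $S$-open sets $U\ni{}^*\!j(\alpha)$ and $V\ni{}^*\!j(\beta)$ in $\,{}^*\!X$; we may take them to be basic, say $U={}^*\!C_1$, $V={}^*\!C_2$ with $C_1\cap C_2=\emptyset$ (shrink if necessary, using that the basic opens form a basis and that ${}^*(C_1)\cap{}^*(C_2)={}^*(C_1\cap C_2)$ forces disjointness of the pieces inside $X$). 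Pulling back along ${}^*\!j$, the sets ${}^*\!j^{-1}({}^*\!C_1)={}^*(j^{-1}(C_1))$ and ${}^*\!j^{-1}({}^*\!C_2)={}^*(j^{-1}(C_2))$ are disjoint basic open sets of $\,{}^*Y$ separating $\alpha$ and $\beta$. Hence the $S$-topology on $\,{}^*Y$ is Hausdorff.

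The only genuinely delicate point is the verification that ${}^*\!j$ intertwines the two $S$-topologies — i.e.\ the identities ${}^*\!j^{-1}({}^*C)={}^*(j^{-1}(C))$ for $C\subseteq X$ and ${}^*\!j({}^*B)\subseteq{}^*\!X$ is the trace of a basic open on $R$ — and these are routine applications of transfer applied to the first-order statements ``$n\in j^{-1}(C)\leftrightarrow j(n)\in C$'' and ``$j$ is injective''. Note the cardinality hypothesis $|Y|\le|X|$ is used precisely to guarantee the existence of the injection $j:Y\hookrightarrow X$; no saturation is invoked anywhere, consistent with the standing remark that this section assumes no saturation. One subtlety worth flagging: we never needed $\,{}^*Y$ to be a \emph{subset} of $\,{}^*\!X$ literally — only that it embeds as an internal subspace — so the argument is insensitive to whatever set-theoretic coding of hyper-extensions the ambient model uses.
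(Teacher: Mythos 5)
Your argument is correct and is essentially the paper's own proof: fix an injection $j:Y\to X$, separate ${}^*\!j(\xi)\ne{}^*\!j(\eta)$ by disjoint sets $A,B\subseteq X$, and pull back to the disjoint sets $j^{-1}(A),j^{-1}(B)\subseteq Y$ via the transfer identity ${}^*\!j^{-1}({}^*\!A)={}^*(j^{-1}(A))$. The surrounding discussion of ${}^*\!j$ as a homeomorphism onto its image is harmless but not needed; the preimage computation alone suffices.
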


\begin{proof}
Fix a 1-1 map $f:Y\to X$. Given $\xi\ne\eta$ in ${}^*Y$, consider $%
{}^*\!f(\xi)\ne{}^*\!f(\eta)$ in ${}^*\!X$. By the hypothesis, we can pick
disjoint sets $A,B\subseteq X$ with ${}^*\!f(\xi)\in{}^*\!A$ and $%
{}^*\!f(\eta)\in{}^*B$. Then $C=f^{-1}(A)$ and $D=f^{-1}(B)$ are disjoint
subsets of $Y$ such that $\xi\in{}^*C$ and $\eta\in{}^*D$.
\end{proof}

Recall a notion that was introduced in \cite{11hr}: A model of nonstandard
analysis is $\kappa$-\emph{constrained} if the following property holds: 
\begin{equation*}
\forall X\ \forall \xi\in{}^*\!X\ \exists A\subseteq X\ \text{such that}\
|A|\le\kappa\ \text{and}\ \xi\in{}^*\!A.
\end{equation*}

We remark that any ultrapower model of nonstandard analysis constructed by
means of an ultrafilter over a set of cardinality $\kappa$ is $\kappa$%
-constrained.

In the countable case, the notion of \emph{constrained} already appeared at
the beginnings of nonstandard analysis, under the name of $\sigma$-\emph{%
quasi standardness} (see W.A.J. Luxemburg's lecture notes \cite{11lux62}).
The existence of nonstandard universes which are not $\kappa$-constrained
for any $\kappa$ is problematic and appears to be closely related to the
existence large cardinals. The reader interested in this foundational issue
is referred to \cite{11hr}.

\begin{proposition}
Assume that our model of nonstandard analysis is $\kappa$-constrained. Then
the S-topology is Hausdorff on ${}^*\kappa$ if and only if the S-topology is
Hausdorff on every hyper-extension ${}^*\!X$.
\end{proposition}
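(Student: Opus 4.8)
The plan is to prove the two implications separately, the forward one being trivial and the reverse one being the real content. First I would dispatch the easy direction: if the $S$-topology is Hausdorff on every hyper-extension ${}^*\!X$, then in particular it is Hausdorff on ${}^*\kappa$, since $\kappa$ is a set and hence a legitimate object to which we may apply the star-map. Nothing further is needed here.

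For the substantial direction, assume the $S$-topology on ${}^*\kappa$ is Hausdorff and that the model is $\kappa$-constrained; I want to show the $S$-topology is Hausdorff on ${}^*\!X$ for an arbitrary set $X$. Given distinct $\xi,\eta\in{}^*\!X$, the idea is to reduce to the situation on ${}^*\kappa$. By $\kappa$-constrainedness applied to the pair $\xi,\eta$, there are sets $A,B\subseteq X$ with $|A|,|B|\le\kappa$ such that $\xi\in{}^*\!A$ and $\eta\in{}^*\!B$. Replacing $A$ by $A\cup B$, I may assume there is a single set $A\subseteq X$ with $|A|\le\kappa$ and $\xi,\eta\in{}^*\!A$. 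Now fix an injection $g:A\hookrightarrow\kappa$. By transfer, ${}^*\!g$ is an internal injection from ${}^*\!A$ into ${}^*\kappa$, so ${}^*\!g(\xi)\neq{}^*\!g(\eta)$ are distinct elements of ${}^*\kappa$. Using the Hausdorff hypothesis on ${}^*\kappa$, pick disjoint $P,Q\subseteq\kappa$ with ${}^*\!g(\xi)\in{}^*\!P$ and ${}^*\!g(\eta)\in{}^*\!Q$. Pulling back, set $C=g^{-1}(P)$ and $D=g^{-1}(Q)$, which are disjoint subsets of $A\subseteq X$; since $\xi\in{}^*\!A$ and ${}^*\!g(\xi)\in{}^*\!P$, transfer of the statement ``$a\in A\ \&\ g(a)\in P\Rightarrow a\in g^{-1}(P)$'' gives $\xi\in{}^*C$, and similarly $\eta\in{}^*D$. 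These are the required disjoint basic open sets in the $S$-topology on ${}^*\!X$ separating $\xi$ and $\eta$.

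I expect the only mild subtlety to be the bookkeeping with transfer: one must be careful that $g^{-1}(P)$ means the preimage inside $A$, so that ${}^*(g^{-1}(P))$ really equals $\{x\in{}^*\!A\mid {}^*\!g(x)\in{}^*\!P\}$, and likewise that disjointness of $P$ and $Q$ in $\kappa$ transfers to disjointness of ${}^*\!P$ and ${}^*\!Q$ (hence of ${}^*C$ and ${}^*D$, which sit inside them via ${}^*\!g$) — but this is exactly the kind of routine application of transfer already used in the proof of Proposition~\ref{Hausdorffbelow}, so there is no genuine obstacle. In essence the argument is: $\kappa$-constrainedness lets us confine any two points to a hyper-extension of a set of size $\le\kappa$, and Proposition~\ref{Hausdorffbelow} (or its proof technique) lets us transport the Hausdorff property from ${}^*\kappa$ down to that hyper-extension.
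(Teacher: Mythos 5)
Your proof is correct and follows essentially the same route as the paper's: use $\kappa$-constrainedness to confine $\xi,\eta$ to ${}^*(A\cup B)$ with $|A\cup B|\le\kappa$, transport the Hausdorff property from ${}^*\kappa$ down to that hyper-extension via an injection, and pull the separating sets back to $X$. The only cosmetic difference is that you inline the injection argument, whereas the paper simply invokes Proposition~\ref{Hausdorffbelow} at that step.
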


\begin{proof}
Let $\xi,\eta\in{}^*\!X$ be given. By the property of $\kappa$-constrained,
we can pick sets $A,B$ with $\xi\in{}^*\!A$, $\eta\in{}^*B$ and $%
|A|,|B|\le\kappa$. Since $|A\cup B|\le\kappa$, by the previous Proposition %
\ref{Hausdorffbelow}, the S-topology on ${}^*(A\cup B)$ is Hausdorff. Pick
disjoint subsets $C,D\subseteq A\cup B$ with $\xi\in{}^*C$ and $\eta\in{}^*D$%
. Then $\xi\in{}^*(C\cap X)$ and $\eta\in{}^*(D\cap X)$ where $C\cap X$ and $%
D\cap X$ are disjoint subsets of $X$.
\end{proof}

So, in any ultrapower model of nonstandard analysis determined by an
ultrafilter on $\mathbb{N}$, if the S-topology is Hausdorff on ${}^*\mathbb{N%
}$ then it is Hausdorff on \emph{all} hyper-extensions ${}^*\!X$.

\begin{proposition}
\label{ultramap11} The $S$-topology on ${}^*\!X$ is Hausdorff if and only if
the ultrafilter map $\mathfrak{U}:{}^*\!X\to\beta X$ is 1-1.
\end{proposition}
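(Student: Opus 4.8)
The plan is to prove both implications directly by unwinding the definitions. Recall that $\mathfrak{U}_\xi = \{A \subseteq X \mid \xi \in {}^*\!A\}$, so $\mathfrak{U}_\xi = \mathfrak{U}_\eta$ precisely when no hyper-extension ${}^*\!A$ separates $\xi$ from $\eta$; this is exactly the statement that $\xi$ and $\eta$ cannot be placed in disjoint basic open sets of the $S$-topology. So the content of the proposition is essentially the observation that in a space whose basic open sets are also closed, the Hausdorff property reduces to separating points by a single clopen basic set.

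\emph{Proof.} Suppose first that the $S$-topology on ${}^*\!X$ is Hausdorff, and let $\xi \ne \eta$ in ${}^*\!X$. By the Hausdorff property there are disjoint open sets separating them, and since the ${}^*\!A$ with $A \subseteq X$ form a basis, we may pick $A, B \subseteq X$ with $\xi \in {}^*\!A$, $\eta \in {}^*B$, and ${}^*\!A \cap {}^*B = \emptyset$. Now ${}^*\!A \cap {}^*B = {}^*(A \cap B)$, so $A \cap B = \emptyset$; in particular $A \in \mathfrak{U}_\xi$ while $A \notin \mathfrak{U}_\eta$ (as $\eta \in {}^*B \subseteq {}^*\!X \setminus {}^*\!A$). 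Hence $\mathfrak{U}_\xi \ne \mathfrak{U}_\eta$, and the ultrafilter map is 1-1.

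Conversely, suppose the ultrafilter map $\mathfrak{U} : {}^*\!X \to \beta X$ is 1-1, and let $\xi \ne \eta$ in ${}^*\!X$. Then $\mathfrak{U}_\xi \ne \mathfrak{U}_\eta$, so there is some $A \subseteq X$ lying in exactly one of the two ultrafilters; say $A \in \mathfrak{U}_\xi \setminus \mathfrak{U}_\eta$. Then $\xi \in {}^*\!A$, while $\eta \notin {}^*\!A$ means $\eta \in {}^*\!X \setminus {}^*\!A = {}^*(X \setminus A)$. The basic open sets ${}^*\!A$ and ${}^*(X \setminus A)$ are disjoint (their intersection is ${}^*(A \cap (X \setminus A)) = {}^*\emptyset = \emptyset$), so they are the required separating neighborhoods of $\xi$ and $\eta$. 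Thus the $S$-topology is Hausdorff. \qed

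There is essentially no obstacle here: the only facts used are that hyper-extensions respect complements and intersections (so basic open sets are clopen and indexed by subsets of $X$) and that an ultrafilter, being a maximal filter, contains exactly one of $A$ and $X \setminus A$. The argument runs parallel in spirit to Proposition \ref{ultramaponto}, where onto-ness corresponded to compactness; here injectivity corresponds to the Hausdorff property. If one wanted, the two directions could even be compressed into a single chain of equivalences, but spelling them out separately keeps the correspondence transparent.
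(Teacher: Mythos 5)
Your proof is correct and follows essentially the same route as the paper's: both arguments reduce the Hausdorff property to separating points by disjoint basic open sets ${}^*\!A$, ${}^*B$, and then translate this into the statement that $\mathfrak{U}_\xi$ and $\mathfrak{U}_\eta$ contain disjoint sets, i.e.\ are distinct. The paper compresses this into a single chain of equivalences where you spell out the two implications, but the content is identical.
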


\begin{proof}
By definition, the $S$-topology on ${}^*\!X$ is Hausdorff if and only for
every pair of elements $\xi\ne\eta$ in ${}^*\!X$ there exist basic open sets 
${}^*\!A,{}^*B\subseteq{}^*\!X$ such that $\xi\in{}^*\!A$, $\eta\in{}^*B$
and ${}^*\!A\cap{}^*B=\emptyset$. But this means that $A\in\mathfrak{U}_\xi$
and $B\in\mathfrak{U}_\eta$ for suitable disjoint sets $A\cap B=\emptyset$.
We reach the thesis by noticing that this last property holds if and only if
the ultrafilters $\mathfrak{U}_\xi$ and $\mathfrak{U}_\eta$ are different.
\end{proof}

Hausdorff $S$-topologies are tied up with special ultrafilters.

\begin{proposition}
Let $\mathcal{U}$ be a non-principal ultrafilter on the set $I$. Then the
following are equivalent:

\begin{enumerate}
\item In the ultrapower model of nonstandard analysis determined by $%
\mathcal{U}$, the $S$-topology on ${}^*I$ is Hausdorff.

\item In any model of nonstandard analysis, if $\mathcal{U}=\mathfrak{U}%
_\alpha$ is generated by a point $\alpha\in{}^*I$, then: 
\begin{equation*}
{}^*\!f(\alpha){\,{\sim}_{{}_{\!\!\!\!\! u}}\;}{}^*\!g(\alpha)\
\Longrightarrow\ {}^*\!f(\alpha)={}^*\!g(\alpha).
\end{equation*}

\item For every $f,g:I\to I$, 
\begin{equation*}
f(\mathcal{U})=g(\mathcal{U})\ \Longrightarrow\ f\equiv_\mathcal{U} g,\ 
\emph{i.e.}\ \{i\in I\mid f(i)= g(i)\}\in\mathcal{U}.
\end{equation*}
\end{enumerate}
\end{proposition}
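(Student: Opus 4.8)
The plan is to prove the cyclic chain of implications $(3)\Rightarrow(2)\Rightarrow(1)\Rightarrow(3)$, exploiting the dictionary between ultrafilters on $I$ and $u$-monads that was developed in Section \ref{sec-uequivalence}, together with Proposition \ref{ultramap11} which characterizes the Hausdorff property of the $S$-topology in terms of injectivity of the ultrafilter map.

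First I would show $(3)\Rightarrow(2)$. Suppose $\mathcal{U}=\mathfrak{U}_\alpha$ in some model, and suppose ${}^*\!f(\alpha)\ueq{}^*\!g(\alpha)$. By the Proposition relating image ultrafilters to generated ultrafilters, $f(\mathcal{U})=f(\mathfrak{U}_\alpha)=\mathfrak{U}_{{}^*\!f(\alpha)}$ and similarly $g(\mathcal{U})=\mathfrak{U}_{{}^*\!g(\alpha)}$; since ${}^*\!f(\alpha)\ueq{}^*\!g(\alpha)$ these two ultrafilters coincide, so $f(\mathcal{U})=g(\mathcal{U})$. By hypothesis $(3)$ this forces $f\equiv_\mathcal{U} g$, i.e.\ $E=\{i\mid f(i)=g(i)\}\in\mathcal{U}=\mathfrak{U}_\alpha$, hence $\alpha\in{}^*\!E={}^*\{i\mid f(i)=g(i)\}$, and therefore ${}^*\!f(\alpha)={}^*\!g(\alpha)$.

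Next $(2)\Rightarrow(1)$: work inside the ultrapower model of nonstandard analysis determined by $\mathcal{U}$. In that model the diagonal embedding sends $I$ into ${}^*I$, and the equivalence class of the identity function $\mathrm{id}:I\to I$ is a point $\alpha\in{}^*I$ with $\mathfrak{U}_\alpha=\mathcal{U}$ — this is the standard fact that the diagonal point of an ultrapower generates the ultrafilter used to build it. Now take any $\xi\ne\eta$ in ${}^*I$; by the basic structure of an ultrapower, $\xi=[f]_\mathcal{U}={}^*\!f(\alpha)$ and $\eta=[g]_\mathcal{U}={}^*\!g(\alpha)$ for suitable $f,g:I\to I$. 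By the contrapositive of $(2)$, since ${}^*\!f(\alpha)\ne{}^*\!g(\alpha)$ we get ${}^*\!f(\alpha)\nueq{}^*\!g(\alpha)$, i.e.\ $\mathfrak{U}_\xi\ne\mathfrak{U}_\eta$, so the ultrafilter map on ${}^*I$ is $1$-$1$ and by Proposition \ref{ultramap11} the $S$-topology on ${}^*I$ is Hausdorff.

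Finally $(1)\Rightarrow(3)$: again in the ultrapower model determined by $\mathcal{U}$, suppose $f(\mathcal{U})=g(\mathcal{U})$ for some $f,g:I\to I$. With $\alpha$ the diagonal point as above, $\mathfrak{U}_{{}^*\!f(\alpha)}=f(\mathfrak{U}_\alpha)=f(\mathcal{U})=g(\mathcal{U})=\mathfrak{U}_{{}^*\!g(\alpha)}$, so ${}^*\!f(\alpha)\ueq{}^*\!g(\alpha)$. By $(1)$ and Proposition \ref{ultramap11} the ultrafilter map on ${}^*I$ is injective, which means ${}^*\!f(\alpha)={}^*\!g(\alpha)$; unraveling the ultrapower this says $\{i\mid f(i)=g(i)\}\in\mathcal{U}$, i.e.\ $f\equiv_\mathcal{U} g$. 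The main obstacle is purely bookkeeping: I need the precise statement that in the ultrapower model built from $\mathcal{U}$ the diagonal point $\alpha$ satisfies $\mathfrak{U}_\alpha=\mathcal{U}$ and that every element of ${}^*I$ is ${}^*\!f(\alpha)$ for some $f$ — both are immediate from \L o\'s's theorem but must be invoked cleanly since the statement quantifies over \emph{the} ultrapower model rather than an arbitrary saturated one. Note also that a single model suffices to settle $(2)$, since the displayed implication in $(2)$ is invariant under the choice of model once $\mathcal{U}=\mathfrak{U}_\alpha$, so there is no circularity in passing freely between ``any model'' in $(2)$ and ``the ultrapower model'' in $(1)$ and $(3)$.
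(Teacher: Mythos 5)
Your proposal is correct and follows essentially the same route as the paper: both arguments rest on the identification $f(\mathcal{U})=\mathfrak{U}_{[f]_\mathcal{U}}$ in the ultrapower $I^I/\mathcal{U}$ (your $[f]_\mathcal{U}={}^*\!f(\alpha)$ with $\alpha$ the diagonal point) together with Proposition \ref{ultramap11}, and on the observation that ${}^*\!f(\alpha)={}^*\!g(\alpha)\Leftrightarrow f\equiv_{\mathfrak{U}_\alpha}g$. The only difference is organizational — you prove the cycle $(3)\Rightarrow(2)\Rightarrow(1)\Rightarrow(3)$ where the paper proves the two biconditionals $(1)\Leftrightarrow(3)$ and $(2)\Leftrightarrow(3)$ — and your handling of the quantifier ``in any model'' in $(2)$ is sound.
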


\begin{proof}
$(1)\Leftrightarrow(3)$. Notice first that if $\xi=[f]_\mathcal{U}$ is the
element of ${}^*I$ given by the $\mathcal{U}$-equivalence class of the
function $f:I\to I$, then $f(\mathcal{U})=\mathfrak{U}_\xi$. Indeed, for
every $A\subseteq I$, one has $A\in f(\mathcal{U})\Leftrightarrow
f^{-1}(A)\in\mathcal{U} \Leftrightarrow\{i\in I\mid f(i)\in A\}\in\mathcal{U}%
\Leftrightarrow \xi=[f]_\mathcal{U}\in{}^*\!A\Leftrightarrow A\in\mathfrak{U}%
_\xi$. Now let $\xi=[f]_\mathcal{U}$ and $\eta=[g]_\mathcal{U}$ be arbitrary
elements of ${}^*I=I^I/\mathcal{U}$. By definition, $\xi=\eta\Leftrightarrow
f\equiv_\mathcal{U} g$; besides, by what just seen above, $f(\mathcal{U})=g(%
\mathcal{U})\Leftrightarrow \mathfrak{U}_\xi=\mathfrak{U}_\eta$. Now, by the
previous proposition the $S$-topology on $I$ is Hausdorff if and only if the
ultrafilter map on $I$ is 1-1, and hence the thesis follows.

$(2)\Leftrightarrow(3)$. Notice that ${}^*\!f(\alpha)={}^*\!g(\alpha)%
\Leftrightarrow\alpha\in{}^*\{i\in I\mid f(i)=g(i)\} \Leftrightarrow
f\equiv_{\mathfrak{U}_\alpha}g$. The thesis follows by recalling that $%
\mathfrak{U}_{{}^*\!f(\alpha)}=f(\mathfrak{U}_\alpha)$ and $\mathfrak{U}%
_{{}^*\!g(\alpha)}=g(\mathfrak{U}_\alpha)$.
\end{proof}

Because of the above equivalences, non-principal ultrafilters that satisfy
property $(3)$, were named \emph{Hausdorff} in \cite{11dfH}. To the author's
knowledge, the problem of existence of such ultrafilters was first
explicitly considered by A. Connes in his paper \cite{11co} of 1970, where
he needed special ultrafilters $\mathcal{U}$ with the property that the maps 
$[\varphi]_\mathcal{U}\mapsto\varphi(\mathcal{U})$ defined on ultrapowers $K^%
\mathbb{N}/\mathcal{U}$ ($K$ a field) be injective into $\beta\mathbb{N}$.
He noticed that such a property was satisfied by selective ultrafilters,
introduced three years before by G. Choquet \cite{11cho} under the name of 
\emph{ultrafiltres absolus}. In consequence, Hausdorff ultrafilters are
consistent. Indeed, selective ultrafilters exist under the \emph{continuum
hypothesis} (this was already proved by G. Choquet \cite{11cho} in 1968).
However, we remark that the existence of selective ultrafilters cannot be
proved in \textsf{ZFC}, as first shown by K. Kunen \cite{11ku}.
Independently, in their 1972 paper \cite{11ch}, G. Cherlin and J. Hirschfeld
proved that non-principal ultrafilters exist which are \emph{not} Hausdorff,
and asked whether Hausdorff ultrafilters exist at all in \textsf{ZFC}. It is
worth remarking that this problem is still open to this day (see \cite%
{11dfH,11bs}).

We close this section by mentioning another result, proved in \cite{11dfS},
that connects Hausdorff ultrafilters and nonstandard analysis

\begin{theorem}
Assume that $\mathfrak{N}\subset\beta\mathbb{N}$ is a set of ultrafilters on 
$\mathbb{N}$ such that

\begin{itemize}
\item $\mathbb{N}\varsubsetneq\mathfrak{N}$, \emph{i.e.} $\mathfrak{N}$
properly contains all principal ultrafilters\,;

\item Every non-principal $\mathcal{U}\in\mathfrak{N}$ is Hausdorff\,;

\item $\mathfrak{N}$ is RK-downward closed, \emph{i.e.} $\mathcal{U}\in 
\mathfrak{N}$ implies that $f(\mathcal{U})\in\mathfrak{N}$ for every $f:%
\mathbb{N}\to\mathbb{N}$\,;

\item $\mathfrak{N}$ is ``strongly" RK-filtered in the following sense: For
every $\mathcal{U},\mathcal{V}\in\mathfrak{N}$ there exist $\mathcal{W}\in%
\mathfrak{N}$ and $f,g:\mathbb{N}\to\mathbb{N}$ such that $f(\mathcal{W})=%
\mathcal{U}$ and $g(\mathcal{W})=\mathcal{V}$.
\end{itemize}

Then $\mathfrak{N}$ is a set of hypernatural numbers of nonstandard analysis
where:

\begin{itemize}
\item For $A\subseteq\mathbb{N}^k$, the hyper-extension ${}^*\!A\subseteq%
\mathfrak{N}^k$ is defined by letting for every $\mathcal{U}\in\mathfrak{N}$ and for
every $f_1,\ldots,f_k:\mathbb{N}\to\mathbb{N}$: 
\begin{equation*}
(f_1(\mathcal{U}),\ldots,f_k(\mathcal{U}))\in{}^*\!A\ \Longleftrightarrow\
\{n\in\mathbb{N}\mid (f_1(n),\ldots,f_k(n))\in A\}\in\mathcal{U}
\end{equation*}

\item For $F:\mathbb{N}^k\to\mathbb{N}$, the hyper-extension ${}^{*}\!F :%
\mathfrak{N}^k\to\mathfrak{N}$ is defined by letting for every $\mathcal{U}%
\in\mathfrak{N}$ and for every $f_1,\ldots,f_k:\mathbb{N}\to\mathbb{N}$: 
\begin{equation*}
{}^{*}\!F (f_1(\mathcal{U}),\ldots,f_k(\mathcal{U}))\ =\
(F\circ(f_1,\ldots,f_k))(\mathcal{U}).
\end{equation*}
($F\circ(f_1,\ldots,f_k):\mathbb{N}\to\mathbb{N}$ is the function $n\mapsto
F(f_1(n),\ldots,f_k(n))$.)
\end{itemize}
\end{theorem}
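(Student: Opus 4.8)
The plan is to verify directly that the data $(\mathfrak{N}, {}^*)$ described in the statement satisfies the axioms required of a (full, but possibly non-$\aleph_1$-saturated) model of nonstandard analysis; that is, that the assignment $A \mapsto {}^*\!A$ and $F \mapsto {}^{*}\!F$ is well-defined and satisfies the transfer principle. The key preliminary observation is that each $\mathcal{U} \in \mathfrak{N}$ should be thought of as the hypernatural number $\alpha_\mathcal{U}$ with $\mathfrak{U}_{\alpha_\mathcal{U}} = \mathcal{U}$, and that an arbitrary element of $\mathfrak{N}$ can be presented in the form $f(\mathcal{U})$ for $f:\mathbb{N}\to\mathbb{N}$ and $\mathcal{U}\in\mathfrak{N}$ — this uses RK-downward closedness (so $f(\mathcal{U})\in\mathfrak{N}$) together with the fact that $\mathbb{N}\varsubsetneq\mathfrak{N}$ is a nontrivial starting point. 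So one first checks that the standard embedding $n \mapsto c_n(\mathcal{U})$ (with $c_n$ the constant function) identifies $\mathbb{N}$ with the principal ultrafilters inside $\mathfrak{N}$, and that ${}^*\!A \cap \mathbb{N} = A$ for $A \subseteq \mathbb{N}$.

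Next I would check that the definitions of ${}^*\!A$ and ${}^{*}\!F$ do not depend on the chosen presentation $f_i(\mathcal{U})$ of a point. This is where the Hausdorff hypothesis is essential: if $(f_1(\mathcal{U}),\ldots,f_k(\mathcal{U})) = (g_1(\mathcal{V}),\ldots,g_k(\mathcal{V}))$, one wants to conclude that the truth value of membership in ${}^*\!A$ computed via $\mathcal{U}$ and via $\mathcal{V}$ agree. The case $\mathcal{U}=\mathcal{V}$ reduces to showing $f_i(\mathcal{U})=g_i(\mathcal{U}) \Rightarrow f_i \equiv_\mathcal{U} g_i$, which is precisely property $(3)$ of the definition of Hausdorff ultrafilter applied to each coordinate (coding $k$-tuples of functions into single functions via a fixed bijection $\mathbb{N}^k \cong \mathbb{N}$, and noting $\mathfrak{N}$ is RK-downward closed so this coding stays inside $\mathfrak{N}$). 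For general $\mathcal{U},\mathcal{V}$, use "strong" RK-filteredness to find $\mathcal{W}\in\mathfrak{N}$ and $p,q$ with $p(\mathcal{W})=\mathcal{U}$, $q(\mathcal{W})=\mathcal{V}$; then $f_i(\mathcal{U}) = (f_i\circ p)(\mathcal{W})$ and $g_i(\mathcal{V})=(g_i\circ q)(\mathcal{W})$, so both presentations are pulled back to presentations over the single ultrafilter $\mathcal{W}$, reducing to the case already handled. Well-definedness of ${}^{*}\!F$ follows by the same pullback trick once one observes $(F\circ(f_1,\ldots,f_k))\circ p = F\circ(f_1\circ p,\ldots,f_k\circ p)$.

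With well-definedness in hand, transfer is essentially the Łoś-type computation already visible in the excerpt. For an atomic formula, say $F(x_1,\ldots,x_k) \in A$ or $F(x_1,\ldots,x_k) = G(y_1,\ldots,y_m)$, one unwinds the definitions: a point $(f_i(\mathcal{U}))_i$ satisfies the $*$-ed atomic formula iff $\{n \mid (f_i(n))_i \text{ satisfies the original formula}\} \in \mathcal{U}$, which is immediate from the displayed defining clauses and the identity $(F\circ(f_1,\ldots,f_k))(\mathcal{U}) = (\text{that composite function})(\mathcal{U})$. Boolean connectives pass through because $\mathcal{U}$ is an ultrafilter (so the set of $n$ witnessing a negation/conjunction is the complement/intersection of the witnessing sets). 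The quantifier step is the real content: given $\exists y\, \phi(y, f_1(\mathcal{U}),\ldots)$ holding in $\mathfrak{N}$, one has a witness $g(\mathcal{V})$; pull $f_i(\mathcal{U})$ and $g(\mathcal{V})$ back to a common $\mathcal{W}$ as above, so the witness becomes $h(\mathcal{W})$ with $\phi(h(\mathcal{W}), (f_i\circ p)(\mathcal{W}),\ldots)$, whence by the atomic/Boolean steps $\{n \mid \phi(h(n), f_i(p(n)),\ldots)\} \in \mathcal{W}$, and pushing forward by $p$ gives $\{m \mid \exists y\, \phi(y, f_i(m),\ldots)\} \in \mathcal{U}$; conversely if that set is in $\mathcal{U}$ one uses a choice function $n \mapsto$ (a witness at $n$) to build $h:\mathbb{N}\to\mathbb{N}$ with $h(\mathcal{U})\in\mathfrak{N}$ a witness. (Higher-order quantifiers over subsets of $\mathbb{N}$ are handled analogously using the $k=1$ clauses applied to characteristic-function codings; one works in the appropriate superstructure.)

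The main obstacle is precisely the confluence argument for well-definedness and for the quantifier step — it is exactly here, and only here, that \emph{all four} hypotheses on $\mathfrak{N}$ get used in concert, and one must be careful that every auxiliary function (compositions, codings of tuples, choice functions extracting witnesses) keeps the relevant ultrafilters inside $\mathfrak{N}$, which is guaranteed by RK-downward closedness. A secondary bookkeeping point is to check that the structure so defined is \emph{full} in the sense needed (all standard sets and functions have hyper-extensions, and the hyper-extensions of first-order-definable sets are the expected ones), but this follows formally once transfer for atomic formulas and the Boolean/quantifier induction are established.
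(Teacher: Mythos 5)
The paper itself gives no proof of this theorem: it is stated as a quoted result with a pointer to \cite{11dfS}, so there is no in-paper argument to compare against. Your proposal is the natural (and, as far as I can judge, the intended) argument: $\mathfrak{N}$ is a directed union of ultrapowers $\mathbb{N}^\mathbb{N}/\mathcal{W}$, where the Hausdorff hypothesis makes each map $[f]_\mathcal{W}\mapsto f(\mathcal{W})$ injective (hence the definitions of ${}^*\!A$ and ${}^{*}\!F$ are independent of the chosen presentation), strong RK-filtering makes the system directed (so parameters and witnesses can be pulled back to a single $\mathcal{W}$, which is exactly what the well-definedness and the quantifier step of the \L o\'s induction require), and RK-downward closure keeps all the auxiliary functions' images inside $\mathfrak{N}$ --- you have correctly located where each of the four hypotheses is used. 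The only details I would insist on making explicit are that presenting an arbitrary $k$-tuple of elements of $\mathfrak{N}$ over a single ultrafilter requires iterating the pairwise filtering hypothesis, and that the induction should be run on the full \L o\'s equivalence ``for all $\mathcal{U}\in\mathfrak{N}$ and all $f_1,\ldots,f_k$: $\mathfrak{N}\models\varphi(f_1(\mathcal{U}),\ldots,f_k(\mathcal{U}))$ iff $\{n\mid \mathbb{N}\models\varphi(f_1(n),\ldots,f_k(n))\}\in\mathcal{U}$'', so that the inductive hypothesis may legitimately be invoked at the common refinement $\mathcal{W}$ rather than at the original $\mathcal{U}$; both points are routine.
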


\section{Regular and good ultrafilters}

\label{sec-regulargood}

A fundamental notion used in the theory of ultrafilters is that of
regularity. We recall that an ultrafilter $\mathcal{U}$ on an infinite set $%
I $ is called \emph{regular}%
\index{ultrafilter!regular} if there exists a family $\{A_i\mid i\in
I\}\subseteq\mathcal{U}$ such that $\bigcap_{i\in I_0}A_i=\emptyset$ for
every infinite $I_0\subseteq I$. When $I$ is countable, it is easily seen
that $\mathcal{U}$ is regular if and only if it is non-principal, but in
general regularity is a stronger condition. A simple nonstandard
characterization holds.

Recall the following weakened version of saturation, where only families of
hyper-extensions are considered (compare with Definition \ref{def-saturation}%
.)

\begin{definition}
\label{def-enlargement} Let $\kappa$ be an infinite cardinal. A model of
nonstandard analysis is a $\kappa$-\emph{enlargement} if it satisfies the
property:

\begin{itemize}
\item Let $\mathcal{G}$ be a family of sets with cardinality $|\mathcal{G}%
|<\kappa$. If $\mathcal{G}$ has the FIP, then $\bigcap_{B\in\mathcal{G}%
}{}^*B\ne\emptyset$
\end{itemize}
\end{definition}

\begin{proposition}
\label{regularity} Let $\mathcal{U}$ be an ultrafilter over the infinite set 
$I$. Then the following are equivalent:

\begin{enumerate}
\item $\mathcal{U}$ is regular.

\item In the ultrapower model of nonstandard analysis determined by $%
\mathcal{U}$, the $|I|^+$-enlarging property holds.
\end{enumerate}
\end{proposition}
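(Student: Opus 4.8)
The plan is to prove the equivalence by unwinding both notions in the ultrapower $\prod_I I/\mathcal{U}$ and recognizing that a regularizing family is precisely a witness for the failure/success of $|I|^+$-enlargement applied to a family of cardinality $|I|$. Throughout, write $\kappa = |I|$, so $\kappa^+$-enlargement applies to families of size $\le \kappa$.

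\medskip

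\textbf{Proof.}
$(1)\Rightarrow(2)$. Suppose $\mathcal{U}$ is regular, witnessed by $\{A_i\mid i\in I\}\subseteq\mathcal{U}$ with $\bigcap_{i\in I_0}A_i=\emptyset$ for every infinite $I_0\subseteq I$. Let $\{B_j\mid j\in J\}$ be any family of subsets of some set $Y$ with $|J|\le\kappa$ and with the FIP; we must find a point in $\bigcap_{j\in J}{}^*B_j$ inside the ultrapower model. Fix a surjection $\sigma:I\twoheadrightarrow J$ (possible since $|J|\le\kappa=|I|$, and we may repeat values). For each $i\in I$ put $C_i=\{j\in J\mid i\in A_j\}$; then by the regularity property each $C_i$ is finite — indeed if some $C_i$ were infinite, the infinitely many indices $j\in C_i$ would all contain $i$ in $A_j$, contradicting $\bigcap_{I_0}A_j=\emptyset$. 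Moreover $\{C_i\}$ covers $J$ in the sense that every $j$ lies in all but finitely… — more simply, for each $j$ the set $\{i\mid j\in C_i\}=A_j\in\mathcal{U}$. Now define $h:I\to Y$ as follows: for $i\in I$, the set $\{\sigma(i)\}\cup\{j: j\in C_i\}$ is finite, so $\bigcap$ of the corresponding $B_j$'s is nonempty by the FIP; let $h(i)$ be a chosen element of that finite intersection. Then $[h]_\mathcal{U}\in{}^*B_j$ for every $j$: indeed $\{i\mid h(i)\in B_j\}\supseteq A_j\in\mathcal{U}$ since $i\in A_j$ forces $j\in C_i$, hence $h(i)\in B_j$ by construction. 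Thus $\bigcap_{j\in J}{}^*B_j\ne\emptyset$, proving $\kappa^+$-enlargement.

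\medskip

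$(2)\Rightarrow(1)$. Conversely assume the ultrapower model determined by $\mathcal{U}$ is a $\kappa^+$-enlargement. Consider the family $\mathcal{G}=\{\Lambda_i\mid i\in I\}$ of subsets of $\mathrm{Fin}(I)$, where $\Lambda_i=\{F\in\mathrm{Fin}(I)\mid i\in F\}$. This family has the FIP: for finitely many indices $i_1,\dots,i_n$ the set $\{i_1,\dots,i_n\}$ itself lies in $\Lambda_{i_1}\cap\cdots\cap\Lambda_{i_n}$. Since $|\mathcal{G}|\le\kappa$, by $\kappa^+$-enlargement there is a point $H\in\bigcap_{i\in I}{}^*\Lambda_i$ in the ultrapower; write $H=[\widehat F]_\mathcal{U}$ for some $\widehat F:I\to\mathrm{Fin}(I)$, i.e. each $\widehat F(i)$ is a finite subset of $I$. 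For each $i\in I$ let $A_i=\{j\in I\mid i\in\widehat F(j)\}$. From $H\in{}^*\Lambda_i$, transfer (or the definition of membership in the ultrapower) gives $\{j\in I\mid i\in\widehat F(j)\}\in\mathcal{U}$, so $A_i\in\mathcal{U}$ for every $i$. Finally, fix any infinite $I_0\subseteq I$ and suppose for contradiction that some $j\in\bigcap_{i\in I_0}A_i$; then $i\in\widehat F(j)$ for all $i\in I_0$, forcing the finite set $\widehat F(j)$ to contain the infinite set $I_0$, a contradiction. Hence $\bigcap_{i\in I_0}A_i=\emptyset$, and $\{A_i\mid i\in I\}$ witnesses that $\mathcal{U}$ is regular. \qed

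\medskip

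The step I expect to require the most care is the passage between the "external" statement ($|I|^+$-enlargement as a property of the ultrapower model, quantifying over arbitrary families of subsets of arbitrary sets $Y$) and the concrete combinatorial witness living on $\mathrm{Fin}(I)$. In the forward direction the delicate point is checking that the map $h$ is well defined using only finitely many constraints at each coordinate $i$ — this is exactly what regularity buys us (the "$C_i$ finite" observation), and it is what makes the intersection of the relevant $B_j$'s nonempty at each coordinate without needing more than the plain FIP. In the reverse direction, the only subtlety is translating $H\in{}^*\Lambda_i$ back to the Łoś-style condition $\{j\mid i\in\widehat F(j)\}\in\mathcal{U}$; once that is in hand, the contradiction with $\widehat F(j)$ being finite is immediate. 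Everything else is bookkeeping about surjections $I\twoheadrightarrow J$ and the identification of elements of the ultrapower with $\mathcal{U}$-classes of functions.
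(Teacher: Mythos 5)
Your proof is correct and follows essentially the same route as the paper's: in the forward direction you exploit regularity to ensure only finitely many constraints are active at each coordinate and define the witnessing function pointwise via the FIP, and in the reverse direction you apply enlargement to the family $\{\,\{F\in\mathrm{Fin}(I)\mid i\in F\}\mid i\in I\,\}$ and read off the regularizing family from the resulting $\mathcal{U}$-class of a function $I\to\mathrm{Fin}(I)$, exactly as in the paper. The only blemish is a bit of index bookkeeping in $(1)\Rightarrow(2)$ (you want an injective re-indexing $J\hookrightarrow I$ of the regularizing family rather than a surjection $I\twoheadrightarrow J$, so that infinitely many distinct $j\in C_i$ really yield infinitely many distinct members of the regularizing family), which is easily repaired and does not affect the argument.
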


\begin{proof}
$(1)\Rightarrow(2)$. Pick a family $\{C_x\mid x\in I\}\subseteq\mathcal{U}$
such that $\bigcap_{x\in\Lambda}C_x=\emptyset$ whenever $\Lambda\subseteq I$
is infinite. Given a family $\{A_x\mid x\in I\}$ with the FIP, for every $%
y\in I$ pick an element $\varphi(y)\in\bigcap_{y\in C_x}A_x$ (this is
possible because $\{x\mid y\in C_x\}$ is finite). If $\xi=[\varphi]_\mathcal{%
U}$ is the element given by the $\mathcal{U}$-equivalence class of the
function $\varphi$, then $\xi%
\in{}^*\!A_x$ for every $x\in I$, since $\{y\in I\mid \varphi(y)\in
A_x\}\supseteq\{y\in I\mid y\in C_x\}=C_x\in\mathcal{U}$.

$(2)\Rightarrow(1)$. Let $\text{Fin}(I)$ be the set of finite parts of $I$,
and for every $x\in I$, let $\widehat{x}=\{a\in \text{Fin}(I)\mid x\in a\}$.
The family $\{\widehat{x}\mid x\in I\}$ satisfies the FIP and so, by the $%
\kappa^+$-enlarging property, there exists $A\in\bigcap_{x\in I}{}^*\widehat{%
x}$. Now pick a function $\varphi:I\to\text{Fin}(I)$ such that $A=[\varphi]_%
\mathcal{U}$ is the $\mathcal{U}$-equivalence class of $\varphi$. Then for
every $x\in I$, one has that $A\in{}^*\widehat{x}$ if and only if 
\begin{equation*}
C_x\ =\ \{y\in I\mid\varphi(y)\in\widehat{x}\}\ =\ \{y\in I\mid
x\in\varphi(y)\}\,\in\,\mathcal{U}.
\end{equation*}
The family $\{C_x\mid x\in I\}\subseteq\mathcal{U}$ witnesses the regularity
of $\mathcal{U}$ because for every infinite $\Lambda\subseteq I$, the
intersection $\bigcap_{x\in\Lambda}C_x=\{y\in I\mid
\Lambda\subseteq\varphi(y)\}=\emptyset$.
\end{proof}

One can also easily characterize points that generate regular ultrafilters.

\begin{theorem}
\label{regularity2} Given any model of nonstandard analysis, let $\alpha\in{}%
^*I$. Then the following are equivalent:

\begin{enumerate}
\item The ultrafilter $\mathfrak{U}_\alpha$ on $I$ generated by $\alpha$ is
regular\,;

\item There exists a function $\varphi:I\to\text{Fin}(I)$ such that ${}^*x%
\in{}^*\!\varphi(\alpha)$ for all $x\in I$.
\end{enumerate}
\end{theorem}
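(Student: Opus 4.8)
The statement is an ``iff'' about a single point $\alpha\in{}^*I$, and both directions should reduce to unravelling the definition of regularity (``there is a family $\{A_x\mid x\in I\}\subseteq\mathfrak U_\alpha$ with $\bigcap_{x\in\Lambda}A_x=\emptyset$ for all infinite $\Lambda$'') together with the basic equivalence $x\in\varphi(n)\Leftrightarrow n\in\{m\mid x\in\varphi(m)\}$ and the transfer of the membership relation. The key translation is: for $\varphi:I\to\Fin(I)$ one has ${}^*x\in{}^*\!\varphi(\alpha)$ if and only if $\alpha\in{}^*\{n\in I\mid x\in\varphi(n)\}$, i.e.\ if and only if $C_x:=\{n\in I\mid x\in\varphi(n)\}\in\mathfrak U_\alpha$. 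So condition $(2)$ says exactly that there is $\varphi:I\to\Fin(I)$ with $C_x\in\mathfrak U_\alpha$ for every $x\in I$.

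First I would do $(2)\Rightarrow(1)$. Given such a $\varphi$, set $A_x=C_x=\{n\in I\mid x\in\varphi(n)\}$; by the translation above each $A_x\in\mathfrak U_\alpha$. For an infinite $\Lambda\subseteq I$, an element $n\in\bigcap_{x\in\Lambda}A_x$ would satisfy $x\in\varphi(n)$ for every $x\in\Lambda$, i.e.\ $\Lambda\subseteq\varphi(n)\in\Fin(I)$, contradicting $|\Lambda|$ infinite. Hence $\bigcap_{x\in\Lambda}A_x=\emptyset$ and $\{A_x\mid x\in I\}$ witnesses regularity of $\mathfrak U_\alpha$. This mirrors the $(2)\Rightarrow(1)$ argument in Proposition~\ref{regularity}.

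For $(1)\Rightarrow(2)$, start from a witnessing family $\{A_x\mid x\in I\}\subseteq\mathfrak U_\alpha$ with $\bigcap_{x\in\Lambda}A_x=\emptyset$ for every infinite $\Lambda\subseteq I$. Define $\psi:I\to\PP(I)$ by $\psi(n)=\{x\in I\mid n\in A_x\}$. The emptiness condition on infinite intersections says precisely that each $\psi(n)$ is \emph{finite}: if some $\psi(n)$ were infinite, then that $n$ would lie in $A_x$ for infinitely many $x$, contradicting $\bigcap_{x\in\Lambda}A_x=\emptyset$ for $\Lambda=\psi(n)$. So $\psi$ is really a map $\varphi:I\to\Fin(I)$. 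Finally, for each fixed $x$, the set $\{n\in I\mid x\in\varphi(n)\}=\{n\in I\mid n\in A_x\}=A_x\in\mathfrak U_\alpha$, so by the translation ${}^*x\in{}^*\!\varphi(\alpha)$; as $x$ was arbitrary this is exactly $(2)$.

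The only genuine subtlety — the ``main obstacle,'' though it is mild — is making the translation ${}^*x\in{}^*\!\varphi(\alpha)\Leftrightarrow\{n\mid x\in\varphi(n)\}\in\mathfrak U_\alpha$ fully rigorous: one applies transfer to the first-order statement $\forall n\,(x\in\varphi(n)\leftrightarrow n\in E_x)$ where $E_x=\{n\mid x\in\varphi(n)\}$, instantiated at $n=\alpha$, which gives ${}^*x\in{}^*\!\varphi(\alpha)\Leftrightarrow\alpha\in{}^*E_x$, and then one uses the definition of $\mathfrak U_\alpha$. A harmless point worth a remark is that $\Fin(I)$, $\varphi$, etc.\ are honest mathematical objects to which the star-map and transfer apply; nothing beyond the basic transfer principle is needed, and in particular no saturation or enlargement is used in this theorem (unlike Proposition~\ref{regularity}). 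Everything else is bookkeeping, so the proof should be short.
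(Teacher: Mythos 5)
Your proposal is correct and takes essentially the same route as the paper: both directions rest on the translation ${}^*x\in{}^*\!\varphi(\alpha)\Leftrightarrow\{n\in I\mid x\in\varphi(n)\}\in\mathfrak{U}_\alpha$ and on passing between a regularizing family $\{C_x\mid x\in I\}$ and the finite-set-valued map $n\mapsto\{x\mid n\in C_x\}$, exactly as in the paper's proof. The only difference is cosmetic: you make explicit the check that each $\psi(n)$ is finite (which the paper leaves implicit in its definition of $\varphi$), and you carry out the transfer step directly rather than via the auxiliary function $\vartheta:y\mapsto C_y$.
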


\begin{proof}
$(1)\Rightarrow(2)$. Given a family $\{C_x\mid x\in I\}$ that witnesses the
regularity of $\mathfrak{U}_\alpha$, let $\varphi:I\to\text{Fin}(I)$ be the
function defined by setting $\varphi(x)=\{y\in I\mid x\in C_y\}$. If $%
\vartheta$ is the function $y\mapsto C_y$, then we obtain the thesis by the
following equivalences, that hold for any $x\in I$: 
\begin{equation*}
C_x\in\mathfrak{U}_\alpha\ \Leftrightarrow\ \alpha\in{}^*(C_x)={}^*%
\vartheta({}^*x)\ \Leftrightarrow\ {}^*x\in\{\xi\in{}^*I\mid \alpha\in{}%
^*\vartheta(\xi)\}={}^*\varphi(\alpha).
\end{equation*}

$(2)\Rightarrow(1)$. Pick $\varphi$ as in the hypothesis, and let 
\begin{equation*}
C_x\ =\ \{y\in I\mid x\in\varphi(y)\}.
\end{equation*}
Notice that ${}^*x\in{}^*\!\varphi(\alpha)\Leftrightarrow \alpha\in{}%
^*(C_x)\Leftrightarrow C_x\in\mathfrak{U}_\alpha$. Moreover, for $X\subseteq
I$, whenever it is possible to pick an element $y\in\bigcap_{x\in X}C_x$,
one has that $X\subseteq\varphi(y)$, and so $X$ is finite.
\end{proof}

As a corollary of the above characterizations of regularity, we can now give
a nonstandard proof of a limiting result about the existence of Hausdorff
ultrafilters.

We recall that the \emph{ultrafilter number} $\mathfrak{u}$ denotes the
minimum cardinality of any (non-principal) ultrafilter base on $\mathbb{N}$.
(A family $\mathcal{B}$ is an \emph{ultrafilter base} on $\mathbb{N}$ if $%
\{A\subseteq\mathbb{N}\mid \exists B\in\mathcal{B},\ B\subseteq A\}$ is an
ultrafilter.) It is not hard to show that $\aleph_0<\mathfrak{u}\le\mathfrak{%
c}$ (see \cite[\S 9]{11blh}).

\begin{theorem}
\cite{11dfH}\ If $\mathcal{U}$ is a regular ultrafilter on a cardinal $%
\kappa\ge\mathfrak{u}$, then $\mathcal{U}$ is not Hausdorff.
\end{theorem}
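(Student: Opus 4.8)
The plan is to use the nonstandard characterization of regularity to produce, inside the ultrapower model determined by $\mathcal{U}$, a pair of functions witnessing a failure of the Hausdorff property, and then transport this failure back to a ZFC statement about $\mathcal{U}$ via the equivalences established earlier. Concretely, work in the ultrapower model of nonstandard analysis determined by $\mathcal{U}$, so that $\mathcal{U} = \mathfrak{U}_\alpha$ for a point $\alpha \in {}^*\kappa$. Since $\mathcal{U}$ is regular, Proposition \ref{regularity} tells us the $\kappa^+$-enlarging property holds, and Theorem \ref{regularity2} gives a function $\varphi:\kappa\to\text{Fin}(\kappa)$ with ${}^*x \in {}^*\varphi(\alpha)$ for every $x\in\kappa$; in other words $\eta = {}^*\varphi(\alpha)$ is an internal finite set of hypernatural numbers containing all of $\kappa$ (identified with the standard copy ${}^*x$).

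The heart of the argument will be to exploit $\kappa \ge \mathfrak{u}$. I would fix an ultrafilter base $\mathcal{B} = \{B_i \mid i < \mathfrak{u}\}$ on $\mathbb{N}$ of minimal cardinality $\mathfrak{u} \le \kappa$, and use it to code two maps $f, g : \kappa \to \kappa$ (or into $\mathbb{N}$, composing with a bijection) whose hyper-extensions behave differently at $\alpha$ but generate the same image ultrafilter. The idea is that the hyperfinite set $\eta$, having at most a nonstandard-finite number of elements yet containing all $\kappa$-many standard points, is "too small" to separate the structure coming from an ultrafilter base of size $\mathfrak{u}$: one can arrange that on the finitely-many elements of $\varphi(y)$ the relevant Boolean combinations of the $B_i$ stabilize, and this stabilization forces $f(\mathcal{U}) = g(\mathcal{U})$ while ${}^*f(\alpha) \ne {}^*g(\alpha)$. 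Then by Proposition (the Hausdorff-equivalences proposition, item $(3) \Leftrightarrow (1)$) the $S$-topology on ${}^*\kappa$ is not Hausdorff, equivalently (item $(1)\Leftrightarrow(3)$) $\mathcal{U}$ fails condition $(3)$, i.e.\ $\mathcal{U}$ is not Hausdorff.

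The main obstacle I anticipate is the combinatorial construction of $f$ and $g$: making precise how the minimal ultrafilter base of size $\mathfrak{u}$, together with the regularity witness $\varphi$, yields two functions that are $\mathcal{U}$-inequivalent yet push $\mathcal{U}$ to the same ultrafilter. The delicate point is that $\varphi(y)$ is finite for each standard $y$ but $\eta = {}^*\varphi(\alpha)$ is only hyperfinite — one must extract from the finiteness of each $\varphi(y)$ a genuine ZFC-level consequence, presumably by a counting or diagonalization argument over the index set of the base $\mathcal{B}$. I would expect the argument to run: enumerate $\mathcal{B}$ along $\mathfrak{u} \le \kappa$, use $\varphi$ to assign to each $y$ the finite trace $\{x < \kappa : y \in C_x\}$ of the regularity family, and define $f(y), g(y)$ by reading off, respectively, the least and (say) second-least index $i$ for which $y \in B_i$ (suitably modified so both are defined); the finiteness of the traces then guarantees that the set where $f$ and $g$ disagree, pulled back through the coding, is not in $\mathcal{U}$ only if the base were smaller than $\mathfrak{u}$, a contradiction. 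Ironing out exactly which Boolean combinations to use so that $f(\mathcal{U}) = g(\mathcal{U})$ holds simultaneously with $f \not\equiv_\mathcal{U} g$ is where the real work lies; everything else is a routine application of the machinery already developed in Sections \ref{sec-Hausdorff} and \ref{sec-regulargood}.
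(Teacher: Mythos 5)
Your overall frame is the right one (regularity of $\mathcal{U}$ yields the $\kappa^+$-enlarging property via Proposition \ref{regularity}; $\kappa\ge\mathfrak{u}$ puts a non-principal ultrafilter base within reach of that enlarging property; and failure of injectivity of the ultrafilter map kills Hausdorffness via Proposition \ref{ultramap11} and Proposition \ref{Hausdorffbelow}), but the proof has a genuine gap exactly where you say "the real work lies." You never actually construct the pair $f,g$ with $f(\mathcal{U})=g(\mathcal{U})$ and $f\not\equiv_{\mathcal{U}}g$: the "least index / second-least index" recipe is not justified, and there is no reason the two image ultrafilters on the index set of the base should coincide. Moreover the detour through Theorem \ref{regularity2} and the hyperfinite set ${}^*\varphi(\alpha)$ containing all standard points is never used in a substantive way; the "too small to separate" intuition is not turned into an argument. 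As written, the proposal is a plan with its central step missing, not a proof.

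The paper closes this gap without ever constructing $f$ and $g$ explicitly. Fix a non-principal ultrafilter base $\mathcal{G}\subset\mathcal{P}(\mathbb{N})$ of cardinality $\mathfrak{u}\le\kappa$, and for each $A\in\mathcal{G}$ let $\Gamma(A)=\{X\subseteq A\mid X\ \text{infinite}\}$. This family of $\mathfrak{u}$-many sets has the FIP, so the $\kappa^+$-enlarging property supplied by regularity gives an element $H\in\bigcap_{A\in\mathcal{G}}{}^*\Gamma(A)$: an internal, hyperinfinite subset of ${}^*\mathbb{N}$ contained in ${}^*\!A$ for every $A\in\mathcal{G}$. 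Hence every $\xi\in H$ generates the same ultrafilter $\mathcal{V}$ (the one generated by $\mathcal{G}$), so the ultrafilter map on ${}^*\mathbb{N}$ is not 1-1 and the $S$-topology on ${}^*\mathbb{N}$, hence on ${}^*\kappa$, is not Hausdorff --- contradicting the assumption that $\mathcal{U}$ is Hausdorff. If you want your two functions explicitly, they fall out afterwards: any two distinct $\xi=[f]_{\mathcal{U}}$ and $\eta=[g]_{\mathcal{U}}$ in $H$ satisfy $f(\mathcal{U})=\mathfrak{U}_\xi=\mathcal{V}=\mathfrak{U}_\eta=g(\mathcal{U})$ while $f\not\equiv_{\mathcal{U}}g$. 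The lesson is that the enlarging property does the combinatorial work for you; there is no need to hand-build a Boolean-combination coding from the base $\mathcal{B}$.
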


\begin{proof}
By contradiction, pick $\mathcal{U}$ a regular Hausdorff ultrafilter on $%
\kappa$, and consider the corresponding ultrapower model of nonstandard
analysis. Then the $S$-topology on ${}^*\kappa$, and hence on ${}^*\mathbb{N}
$, is Hausdorff. Moreover, by the characterization of Proposition \ref%
{regularity}, the $\kappa^+$-enlarging property holds. Now pick $\mathcal{G}%
\subset\mathcal{P}(\mathbb{N})$ a non-principal ultrafilter base of
cardinality $\mathfrak{u}$, and for every $A\in\mathcal{G}$, let $%
\Gamma(A)=\{X\subseteq A\mid X\ \text{is infinite}\}$. Clearly the family $%
\{\Gamma(A)\mid A\in\mathcal{G}\}$ has the FIP, and so by the enlarging
property there exists $H\in\bigcap_{A\in\mathcal{G}}{}^*\Gamma(A)$. Then $H%
\subset{}^*\mathbb{N}$ is hyperinfinite and $\mathfrak{U}_\xi=\mathcal{V}$
for every $\xi\in H$, where $\mathcal{V}$ is the ultrafilter on $\mathbb{N}$
generated by $\mathcal{G}$. This shows that the ultrafilter map $\mathfrak{U}%
:{}^*\mathbb{N}\to\beta\mathbb{N}$ is not 1-1, and hence the $S$-topology on 
${}^*\mathbb{N}$ is not Hausdorff, a contradiction.
\end{proof}

We recall that an ultrafilter $\mathcal{U}$ is \emph{countably incomplete}
if it is not closed under countable intersections, \emph{i.e.} if there
exists a countable family $\{A_n\}$ of elements of $\mathcal{U}$ such that $%
\bigcap_n A_n\notin\mathcal{U}$ (equivalently, one may ask for that
intersection to be empty). We remark that an ultrapower modulo $\mathcal{U}$
determines a model of nonstandard analysis if and only if $\mathcal{U}$ is
countably incomplete, as otherwise one would have ${}^*\mathbb{N}=\mathbb{N}$%
. Indeed, if $\xi=[f]_\mathcal{U}$ is an infinite element in the ultrapower $%
{}^*\mathbb{N}=\mathbb{N}^I/\mathcal{U}$, then for every $n\in\mathbb{N}$,
the set $\Gamma_n=\{i\in\mathbb{N}\mid f(i)\ne n\}\in\mathcal{U}$ but the
countable intersection $\bigcap_n\Gamma_n=\emptyset$. An ultrafilter $%
\mathcal{U}$ on an infinite set $I$ is \emph{good}%
\index{ultrafilter!good} if for every antimonotonic function $f:%
\text{Fin}(I)\to\mathcal{U}$ there exists an antiadditive $g:\text{Fin}(I)\to%
\mathcal{U}$ such that $g(a)\subseteq f(a)$ for all $a\in\text{Fin}(I)$. $f$
is called \emph{antimonotonic} if $f(a)\supseteq f(b)$ whenever $a\subseteq
b $; and $g$ is called \emph{antiadditive} if $g(a\cup b)=g(a)\cap g(b)$.
(See, \emph{e.g.}, \cite[\S 6.1]{11ck}.)

\begin{theorem}
\label{goodness} Let $\mathcal{U}$ be a countably incomplete ultrafilter
over the infinite set $I$. Then the following are equivalent:

\begin{enumerate}
\item $\mathcal{U}$ is good.

\item In the ultrapower model of nonstandard analysis determined by $%
\mathcal{U}$, the $\kappa^+$-\emph{saturation} property holds, where $%
\kappa=|I|$.
\end{enumerate}
\end{theorem}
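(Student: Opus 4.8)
The statement is the classical theorem of Keisler–Kunen that $\kappa^+$-saturation of an ultrapower $\mathbb{N}^I/\mathcal{U}$ (with $\kappa = |I|$) is equivalent to $|I|^+$-goodness of $\mathcal{U}$, in the presence of countable incompleteness. I would prove the two implications separately, and since one direction is substantially harder than the other, I would spend most of the effort there. Throughout, I would freely use the ultrapower representation $\xi = [f]_{\mathcal{U}}$ for elements of ${}^*\mathbb{N}$ and, more generally, of ${}^*A$ for any internal set coded by a function into $\mathcal{P}(\mathbb{N})^I$ or into an appropriate power, together with the already-established dictionary between $\mathfrak{U}_\xi$ and $f(\mathcal{U})$.

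\smallskip

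\emph{$(1)\Rightarrow(2)$.} The plan is to reduce $\kappa^+$-saturation to the ``monotone" case and then apply goodness. Suppose $\{X_j \mid j \in J\}$ is a family of internal subsets of some internal set with $|J| \le \kappa$ and the FIP. Index $J$ by $I$ (possible since $|J| \le \kappa = |I|$, padding with repetitions if necessary) and, replacing each $X_j$ by the intersection of finitely many members, build a \emph{decreasing} chain indexed by $\mathrm{Fin}(I)$: for $a \in \mathrm{Fin}(I)$ set $Y_a = \bigcap_{j \in a} X_j$, so $Y_a \supseteq Y_b$ when $a \subseteq b$, and each $Y_a \ne \emptyset$. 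Each $Y_a$ is internal, hence coded by some $h_a : I \to \mathcal{P}(\mathbb{N})$ with $\{i \mid h_a(i) \ne \emptyset\} \in \mathcal{U}$; shrinking to that $\mathcal{U}$-large set, define $f : \mathrm{Fin}(I) \to \mathcal{U}$ by $f(a) = \{i \mid h_a(i) \ne \emptyset \text{ and consistency holds along } a\}$ so that $f$ is antimonotonic. Goodness produces an antiadditive $g \le f$ with values in $\mathcal{U}$; the antiadditivity is exactly what lets one pick, for each $i$, a single element $\varphi(i)$ lying simultaneously in all the relevant $h_a(i)$ for those finitely many $a$ with $i \in g(a)$ — this is the step where countable incompleteness enters, guaranteeing $\bigcap_n g(a_n)$ can be arranged empty so each $i$ meets only finitely many $a$'s. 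Then $[\varphi]_{\mathcal{U}} \in Y_{\{j\}} \subseteq X_j$ for every $j$, giving the common element.

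\smallskip

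\emph{$(2)\Rightarrow(1)$.} Here I would argue the contrapositive-flavoured direct construction: given an antimonotonic $f : \mathrm{Fin}(I) \to \mathcal{U}$, I want an antiadditive $g \le f$. Consider, inside the ultrapower, the internal sets indexed by $a \in \mathrm{Fin}(I)$ obtained by transferring the statement ``there is a finitely additive selection below $f$ on any finite subfamily" — concretely, for each finite $F \subseteq \mathrm{Fin}(I)$ one can choose by hand a finite-domain antiadditive refinement of $f\!\restriction\! F$, and these fit together into a family of internal sets with the FIP of size $|\mathrm{Fin}(I)| = \kappa$. By $\kappa^+$-saturation this family has a common point, which decodes to a global antiadditive $g : \mathrm{Fin}(I) \to \mathcal{U}$ with $g \le f$; one checks $g(a) \in \mathcal{U}$ because $g(a) \supseteq$ the appropriate $f$-value on a $\mathcal{U}$-large set. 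The bookkeeping in packaging ``an antiadditive refinement on a finite subfamily" as an internal object, and verifying the FIP, is routine but needs care.

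\smallskip

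\emph{Main obstacle.} The genuinely delicate point is the $(1)\Rightarrow(2)$ direction, specifically converting the given FIP family into a \emph{monotone} system indexed by $\mathrm{Fin}(I)$ and then extracting a pointwise selection from the antiadditive $g$: one must verify that for each $i$ the set $\{a \in \mathrm{Fin}(I) \mid i \in g(a)\}$ is \emph{directed downward and finite enough} that $\bigcap\{h_a(i) \mid i \in g(a)\} \ne \emptyset$, which is where antiadditivity ($g(a \cup b) = g(a) \cap g(b)$) combined with countable incompleteness does the real work. I would isolate this as the crux and present the rest — the indexing, the FIP verifications, the internal-set coding — telegraphically.
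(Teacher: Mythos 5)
Your direction $(1)\Rightarrow(2)$ is essentially the paper's argument: index the FIP family by $I$, define the antimonotonic $f(a)$ from the non-emptiness of the fibrewise intersections $\bigcap_{x\in a}\varphi_x(i)$, cut down to a decreasing chain $I_1\supseteq I_2\supseteq\cdots$ with empty intersection supplied by countable incompleteness, apply goodness to get an antiadditive $g\le f$, and use antiadditivity to conclude that $\Lambda_j=\{x\mid j\in g(\{x\})\}$ is finite (if it contained $k$ distinct points then $j\in g(\{y_1,\dots,y_k\})\subseteq f(\{y_1,\dots,y_k\})\subseteq I_k$), whence a pointwise selection $h(j)\in\bigcap_{x\in\Lambda_j}\varphi_x(j)$ exists and $[h]_{\mathcal{U}}$ is the common point. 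This matches the paper, modulo your leaving the cut-down to $I_{|a|}$ implicit in the phrase ``consistency holds along $a$''; that cut-down is precisely where countable incompleteness is consumed, so it should appear explicitly in the definition of $f$.

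Your direction $(2)\Rightarrow(1)$, however, has a genuine gap. You propose to saturate a family of ``internal sets of finite-domain antiadditive refinements of $f\!\restriction\!F$'' and decode the common point into a global $g:\text{Fin}(I)\to\mathcal{U}$. Two problems. First, the constraint ``$g(a)\in\mathcal{U}$'' is external ($\mathcal{U}$ is not an internal subset of ${}^*\mathcal{P}(I)$), so the sets you describe are not internal as stated. Second, if you drop that constraint to restore internality, the common point produced by saturation is an internal function whose values are internal subsets of ${}^*I$ lying below ${}^*(f(a))$ (or elements of ${}^*\,\mathcal{U}$), and such an internal set need not trace back to a $\mathcal{U}$-large subset of $I$ --- it may contain no standard points at all. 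So the decoding step fails, and your remark that ``$g(a)\in\mathcal{U}$ because $g(a)\supseteq$ the appropriate $f$-value on a $\mathcal{U}$-large set'' does not repair it: you need $g(a)\subseteq f(a)$, and membership in $\mathcal{U}$ must come from elsewhere. The paper avoids all of this by choosing the internal sets one type lower: for each $x\in I$ it takes $A_x=[G_{\{x\}}]_{\mathcal{U}}\subseteq{}^*\text{Fin}(I)$, where $G_a(i)=\{b\supseteq a\mid i\in f(b)\}$, so that the FIP is witnessed by the standard points ${}^*a\in\bigcap_{x\in a}A_x$, and $\kappa^+$-saturation yields a \emph{single element} $[\vartheta]_{\mathcal{U}}\in\bigcap_{x\in I}A_x$, i.e.\ an ordinary function $\vartheta:I\to\text{Fin}(I)$. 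One then sets $g(a)=\{i\in I\mid a\subseteq\vartheta(i)\ \text{and}\ i\in f(\vartheta(i))\}$; this is antiadditive and $\le f$ for \emph{any} choice of $\vartheta$ (a two-line computation from antimonotonicity of $f$), and saturation is used only to guarantee $g(a)\in\mathcal{U}$, which is exactly the statement that $[\vartheta]_{\mathcal{U}}\in\bigcap_{x\in a}A_x$. The missing idea in your plan is this ``pointwise'' encoding: approximate the choice $i\mapsto\vartheta(i)$ rather than the function $a\mapsto g(a)$.
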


\begin{proof}
$(1)\Rightarrow(2)$.\ Let $\{A_x\mid x\in I\}$ be a family of internal
subsets of an hyper-extension ${}^*Y$ with the FIP. For every $x\in I$, pick
a function $\varphi_x:I\to\mathcal{P}(Y)$ such that $A_x=[\varphi_x]_%
\mathcal{U}$. By countable incompleteness, we can fix a sequence of sets 
\begin{equation*}
I_1\supseteq I_2\supseteq\ldots\supseteq I_n\supseteq I_{n+1}\supseteq\ldots
\end{equation*}
such that $I_n\in\mathcal{U}$ for all $n$ and $\bigcap_n I_n=\emptyset$.
Given $a\in\text{Fin}(I)$ of cardinality $n$, define $f(a)=\{i\in I_n\mid
\bigcap_{x\in a}\varphi_x(i)\ne\emptyset\}$. By the hypothesis, the finite
intersection $\bigcap_{x\in a}A_x\ne\emptyset$, and so $f(a)\in\mathcal{U}$.
Since $f$ is antimonotonic, by the goodness property of $\mathcal{U}$, we
can pick an antiadditive $g:\text{Fin}(I)\to\mathcal{U}$ such that $%
g(a)\subseteq f(a)$ for all $a\in\text{Fin}(I)$. For $j\in I$, let $%
\Lambda_j=\{x\mid j\in g(\{x\})\}$. If there exist distinct elements $%
y_1,\ldots,y_k\in\Lambda_j$, then 
\begin{equation*}
j\in g(\{y_1\})\cap\ldots\cap g(\{y_k\})=g(\{y_1,\ldots,y_k\})\subseteq
f(\{y_1,\ldots,y_k\})\subseteq I_k.
\end{equation*}
By recalling that $\bigcap_k I_k=\emptyset$, we can conclude that every $%
\Lambda_j$ must be finite. Notice that $j\in\bigcap_{x\in\Lambda_j}g(\{x%
\})=g(\Lambda_j)\subseteq f(\Lambda_j)$ and so we can pick an element $%
h(j)\in\bigcap_{x\in\Lambda_j}\varphi_x(j)\ne\emptyset$. In this way, for
every $x\in I$, we have $j\in g(\{x\})\Leftrightarrow
x\in\Lambda_j\Rightarrow h(j)\in\varphi_x(j)$. If $\xi=[h]_\mathcal{U}\in{}%
^*Y$ is the element in the ultrapower model that corresponds to the function 
$h:I\to Y$, then $\xi\in\bigcap_{x\in I}A_x$, because for every $x$ one has
the inclusion $\{j\in I\mid h(j)\in\varphi_x(j)\}\supseteq g(\{x\})\in%
\mathcal{U}$.

$(2)\Rightarrow(1)$.\ Let $f:\text{Fin}(I)\to\mathcal{U}$ be antimonotonic.
For $a\in\text{Fin}(I)$, let $G_a:I\to\mathcal{P}(\text{Fin}(I))$ be the
function where $G_a(i)=\{b\supseteq a\mid i\in f(b)\}$. Notice that $%
G_a(i)\cap G_b(i)=G_{a\cup b}(i)$. For every $x\in I$, $A_x=[G_{\{x\}}]_%
\mathcal{U}$ is an internal subset of ${}^*\text{Fin}(I)$, and the family $%
\{A_x\mid x\in I\}$ has the FIP because ${}^*a\in\bigcap_{x\in a}A_x$ for
every $a\in\text{Fin}(I)$. Indeed, $a\in\bigcap_{x\in
a}G_{\{x\}}(i)=G_a(i)\Leftrightarrow i\in f(a)$, and $f(a)\in\mathcal{U}$.
By $\kappa^+$-\emph{saturation}, we can pick a function $\vartheta:I\to\text{%
Fin}(I)$ such that the corresponding element in the ultrapower model $%
\xi=[\vartheta]_\mathcal{U}\in\bigcap_{x\in I}A_x$. Finally, define $%
g(a)=\{i\in I\mid\vartheta(i)\in G_a(i)\}$. Clearly, $g(a)\in\mathcal{U}$
because $\xi\in\bigcap_{x\in a}A_x$. Moreover, $i\in g(a)\Leftrightarrow
\vartheta(i)\in G_a(i)\Leftrightarrow \vartheta(i)\supseteq a$ and $i\in
f(\vartheta(i))$, and so $i\in f(\vartheta(i))\subseteq f(a)$. This shows
that $g(a)\subseteq f(a)$. Besides, $g$ is antiadditive because $i\in
g(a)\cap g(b)\Leftrightarrow \vartheta(i)\in G_a(i)$ and $\vartheta(i)\in
G_b(i)\Leftrightarrow \vartheta(i)\in G_{a\cup b}(i)\Leftrightarrow i\in
g(a\cup b)$.
\end{proof}

Points that generate good ultrafilters are characterized as follows.

\begin{theorem}
\label{goodness2} Given a model of nonstandard analysis, let $\alpha\in{}^*I$
be such that $\mathfrak{U}_\alpha$ is countably incomplete. Then the
following are equivalent:

\begin{enumerate}
\item $\mathfrak{U}_\alpha$ is a good ultrafilter\,;

\item If $F:I\to\mathcal{P}(\text{Fin}(I))$ has the property that for every
finite $a\subset I$ there exists $A\in{}^*F(\alpha)$ with $^*a\subseteq A$,
then there exists a function $\vartheta:I\to\text{Fin}(I)$ such that ${}^*x%
\in{}^*\vartheta(\alpha)\in{}^*F(\alpha)$ for all $x\in I$.
\end{enumerate}
\end{theorem}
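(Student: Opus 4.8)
The plan is to mimic the structure of the proofs of Theorems~\ref{regularity2} and~\ref{goodness}, translating the combinatorial goodness condition into a statement about the point $\alpha$ via the correspondence $\mathfrak{U}_{{}^*h(\alpha)} = h(\mathfrak{U}_\alpha)$ and the basic equivalences $\,{}^*x \in {}^*\vartheta(\alpha) \Leftrightarrow \alpha \in {}^*\{y\mid x\in\vartheta(y)\}$ used earlier. The key dictionary entry is this: for a function $F:I\to\mathcal{P}(\Fin(I))$, the object ${}^*F(\alpha)$ is an internal subset of ${}^*\Fin(I)$, and ``there exists $A\in{}^*F(\alpha)$ with ${}^*a\subseteq A$'' holds iff $\{y\in I\mid \exists A\in F(y),\ a\subseteq A\}\in\mathfrak{U}_\alpha$; likewise ``${}^*x\in{}^*\vartheta(\alpha)$'' holds iff the set $C_x=\{y\in I\mid x\in\vartheta(y)\}\in\mathfrak{U}_\alpha$. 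So (2) is precisely a selection principle: given such an $F$, one wants $\vartheta$ with $C_x\in\mathfrak{U}_\alpha$ for all $x$ and, additionally, $\vartheta(y)\in F(y)$ on a large set.

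For the implication $(1)\Rightarrow(2)$, I would follow the $(1)\Rightarrow(2)$ half of Theorem~\ref{goodness}. Assume $\mathfrak{U}_\alpha$ is good and countably incomplete; fix a decreasing sequence $I_1\supseteq I_2\supseteq\cdots$ with each $I_n\in\mathfrak{U}_\alpha$ and $\bigcap_n I_n=\emptyset$. Given $F$ as in (2), define for $a\in\Fin(I)$ of cardinality $n$ the set $f(a)=\{i\in I_n\mid {}^*a\subseteq A \text{ for some } A\in F(i)\text{-style condition}\}$; more precisely $f(a)=\{i\in I_n\mid \exists A\in F(i),\ a\subseteq A\}$. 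By the hypothesis on $F$ and the dictionary above, $f(a)\in\mathfrak{U}_\alpha$, and $f$ is antimonotonic. Apply goodness to get an antiadditive $g:\Fin(I)\to\mathfrak{U}_\alpha$ with $g(a)\subseteq f(a)$. Exactly as in Theorem~\ref{goodness}, the sets $\Lambda_j=\{x\mid j\in g(\{x\})\}$ are finite (since $j\in g(\Lambda_j)\subseteq f(\Lambda_j)\subseteq I_{|\Lambda_j|}$ forces $|\Lambda_j|$ bounded by the emptiness of $\bigcap_k I_k$), and $j\in g(\Lambda_j)\subseteq f(\Lambda_j)$ means there is some $A_j\in F(j)$ with $\Lambda_j\subseteq A_j$. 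Define $\vartheta(j)=\Lambda_j$ when $\Lambda_j\subseteq A_j$ for the chosen $A_j$ — so that $\vartheta(j)\in F(j)$ is witnessed — or rather set $\vartheta(j)$ to be that $A_j\in F(j)$ itself; then for each $x\in I$, $\{j\mid x\in\vartheta(j)\}\supseteq\{j\mid x\in\Lambda_j\}=g(\{x\})\in\mathfrak{U}_\alpha$, so ${}^*x\in{}^*\vartheta(\alpha)$, and $\vartheta(j)\in F(j)$ on all of $I$ gives $\alpha\in{}^*\{j\mid\vartheta(j)\in F(j)\}$, i.e.\ ${}^*\vartheta(\alpha)\in{}^*F(\alpha)$.

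For $(2)\Rightarrow(1)$, I would run the $(2)\Rightarrow(1)$ argument of Theorem~\ref{goodness} in this pointwise language. Given antimonotonic $f:\Fin(I)\to\mathfrak{U}_\alpha$, set $G_a(i)=\{b\supseteq a\mid i\in f(b)\}$, so $G_a(i)\cap G_b(i)=G_{a\cup b}(i)$, and let $F:=G_{\{\cdot\}}$ packaged suitably — actually define $F(i)=\{G_a(i)\mid a\in\Fin(I)\}$ or, more directly, observe that the family $\{{}^*\{y\mid x\in G'(y)\}\}$ has the right FIP; the cleanest route is to let $F(i)=\{G_a(i):a\in\Fin(I)\}$ and check the hypothesis of (2): for finite $a$, the element ${}^*a$ of ${}^*\Fin(I)$ lies in ${}^*G_{\{x\}}(\alpha)$ for each $x\in a$ because $a\in G_{\{x\}}(i)\Leftrightarrow i\in f(\{x\})\cap\cdots$, wait — one uses $a\in\bigcap_{x\in a}G_{\{x\}}(i)=G_a(i)\Leftrightarrow i\in f(a)\in\mathfrak{U}_\alpha$. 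So (2) produces $\vartheta:I\to\Fin(I)$ with ${}^*x\in{}^*\vartheta(\alpha)$ for all $x$ and ${}^*\vartheta(\alpha)\in{}^*F(\alpha)$. Define $g(a)=\{i\in I\mid \vartheta(i)\in G_a(i)\}$; then $g(a)\in\mathfrak{U}_\alpha$ (from ${}^*x\in{}^*\vartheta(\alpha)$ for $x\in a$, via $i\in\bigcap_{x\in a}g(\{x\})$-type manipulations), $g(a)\subseteq f(a)$ because $\vartheta(i)\in G_a(i)$ gives $\vartheta(i)\supseteq a$ and $i\in f(\vartheta(i))\subseteq f(a)$, and $g$ is antiadditive since $\vartheta(i)\in G_a(i)\cap G_b(i)\Leftrightarrow\vartheta(i)\in G_{a\cup b}(i)$.

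The main obstacle I anticipate is purely bookkeeping: getting the object $F$ and its hypothesis in (2) to line up exactly with the $G_a$ construction — in particular making sure that the single function $F:I\to\mathcal{P}(\Fin(I))$ carries enough information that its image ${}^*F(\alpha)$ encodes all the $G_a(i)$ simultaneously, and that ``${}^*a\subseteq A$ for some $A\in{}^*F(\alpha)$'' unwinds to the correct $\mathfrak{U}_\alpha$-membership. Once that translation is pinned down, both directions are formal transcriptions of the proof of Theorem~\ref{goodness} with the ultrapower coordinate functions $\varphi_x, h, \vartheta$ replaced by their $u$-equivalence counterparts and the $\mathcal{U}$-membership statements replaced by ``$\alpha\in{}^*(\cdot)$'' statements, using countable incompleteness of $\mathfrak{U}_\alpha$ to supply the decreasing sequence $\{I_n\}$ exactly as before.
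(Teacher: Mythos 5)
Your $(1)\Rightarrow(2)$ direction is essentially the paper's own argument: the same $f(a)=\{i\in I_n\mid\exists b\in F(i),\ a\subseteq b\}$, the same appeal to goodness to obtain the antiadditive refinement $g$, the same finiteness of the sets $\Lambda_j$ via $\bigcap_n I_n=\emptyset$, and the same final choice of $\vartheta(j)$ as a member of $F(j)$ containing $\Lambda_j$ (your momentary hesitation between $\Lambda_j$ and $A_j$ resolves to the correct choice $\vartheta(j)=A_j$). That half is correct.

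The gap is in $(2)\Rightarrow(1)$, and it is exactly the point you flag as ``bookkeeping'' and then leave unresolved: the definition of $F$. The candidate you actually write down, $F(i)=\{G_a(i)\mid a\in\text{Fin}(I)\}$, is not of the required type: each $G_a(i)$ is a \emph{subset} of $\text{Fin}(I)$, so this $F$ maps into $\mathcal{P}(\mathcal{P}(\text{Fin}(I)))$ rather than $\mathcal{P}(\text{Fin}(I))$, and with it neither the hypothesis ``$\exists A\in{}^*F(\alpha)$ with ${}^*a\subseteq A$'' nor the conclusion ``${}^*\vartheta(\alpha)\in{}^*F(\alpha)$'' (where $\vartheta(i)\in\text{Fin}(I)$) can be read in the way your subsequent steps require. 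The point you are missing is that one should \emph{not} try to encode all the $G_a$'s: take simply $F(i)=\{b\in\text{Fin}(I)\mid i\in f(b)\}$, i.e.\ $G_\emptyset(i)$. Then for each finite $a$ one has $f(a)\in\mathfrak{U}_\alpha$, hence $\alpha\in{}^*(f(a))={}^*\{i\mid a\in F(i)\}$, hence ${}^*a\in{}^*F(\alpha)$, and the witness $A$ demanded by the hypothesis of $(2)$ is ${}^*a$ itself (trivially ${}^*a\subseteq{}^*a$). With this $F$, the function $\vartheta$ produced by $(2)$ satisfies ${}^*\vartheta(\alpha)\in{}^*F(\alpha)$, which unwinds to $\alpha\in{}^*\{i\mid i\in f(\vartheta(i))\}$ --- precisely what is needed both for $g(a)=\{i\mid a\subseteq\vartheta(i)\in F(i)\}$ to belong to $\mathfrak{U}_\alpha$ (together with ${}^*x\in{}^*\vartheta(\alpha)$ for the finitely many $x\in a$) and for the inclusion $g(a)\subseteq f(a)$ via antimonotonicity. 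Once $F$ is fixed this way, the rest of your computation ($g$ antiadditive, $g(a)\subseteq f(a)$) goes through as written and coincides with the paper's proof.
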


\begin{proof}
The proof is similar to the one of the previous proposition.

$(1)\Rightarrow(2)$. Fix a sequence $I_1\supseteq
I_2\supseteq\ldots\supseteq I_n\supseteq I_{n+1}\supseteq\ldots$ of sets in $%
\mathfrak{U}_\alpha$ such that $\bigcap_n I_n=\emptyset$. Now let $F$
satisfy the conditions in $(2)$. For every finite $a=\{x_1,\ldots,x_n\}%
\subset I$ with $n$-many elements, put: 
\begin{equation*}
f(a)\ =\ \{i\in I_n\mid \exists b\in F(i)\ \text{such that}\ a\subseteq b\}.
\end{equation*}
By the hypothesis, there exists $A\in{}^*F(\alpha)$ with $%
{}^*x_1,\ldots,{}^*x_n\in A$. But then $\alpha$ is an element of ${}^*I_n$
such that ${}^*a\subseteq A$ for a suitable $A\in{}^*F(\alpha)$, and this
means that $\alpha\in{}^*(f(a))$, \emph{i.e.} $f(a)\in\mathfrak{U}_\alpha$.
Moreover, it directly follows from the definition that $f:\text{Fin}(I)\to%
\mathfrak{U}_\alpha$ is antimonotonic. So, we can apply the hypothesis and
pick an antiadditive function $g:\text{Fin}(I)\to\mathfrak{U}_\alpha$ such
that $g(a)\subseteq f(a)$ for all $a$. For $i\in I$, put $\Lambda(i)=\{x\mid
i\in g(\{x\})\}$. Notice that if $a\subseteq\Lambda(i)$ has cardinality $n$,
then $i\in\bigcap_{x\in a}g(\{x\})=g(a)\subseteq f(a)\subseteq I_n$; and
since $\bigcap I_n=\emptyset$, this shows that $\Lambda(i)$ must be finite.
Finally, pick any function $\vartheta:I\to\text{Fin}(I)$ such that $%
\Lambda(i)\subseteq\vartheta(i)\in F(i)$ for all $i\in f(\Lambda(i))$. Since 
$\alpha\in{}^*(f(\Lambda(i))$, it is ${}^*\Lambda(\alpha)\subseteq{}%
^*\vartheta(\alpha)\in{}^*F(\alpha)$. Besides, for every $x\in I$ we have
that $\alpha\in{}^*(g(\{x\}))={}^*\!g(\{{}^*x\})$, so ${}^*x\in{}%
^*\Lambda(\alpha)$, and hence ${}^*x\in{}^*\vartheta(\alpha)$.

$(2)\Rightarrow(1)$.\ Given an antimonotonic function $f:\text{Fin}(I)\to%
\mathfrak{U}_\alpha$, define $F:I\to\mathcal{P}(\text{Fin}(I))$ by putting $%
F(i)=\{a\in\text{Fin}(I)\mid i\in f(a)\}$. For every $a=\{x_1,\ldots,x_n\}\in%
\text{Fin}(I)$, $\alpha\in{}^*(f(a))={}^*\{i\in I\mid a\in F(i)\}$, and so $%
{}^*a=\{{}^*x_1,\ldots,{}^*x_n\}\in{}^*F(\alpha)$. Then, by the hypothesis
there exists a function $\vartheta:I\to\text{Fin}(I)$ such that $%
{}^*\vartheta(\alpha)\in{}^*F(\alpha)$ and ${}^*x\in{}^*\vartheta(\alpha)$
for every $x\in I$. Finally, put $g(a)=\{i\in I\mid
a\subseteq\vartheta(i)\in F(i)\}$. Notice that $\alpha\in{}^*(g(a))$,
because ${}^*a\subseteq{}^*\vartheta(\alpha)\in{}^*F(\alpha)$. It is readily
verified that $g$ is antiadditive. Moreover, notice that $\vartheta(i)\in
F(i)\Leftrightarrow i\inf(\vartheta(i))$, so $i\in g(a)\Rightarrow
a\subseteq\vartheta(i)$ and $i\in f(\vartheta(i))\Rightarrow i\in f(a)$, and
also the desired inclusion $g(a)\subseteq f(a)$ is verified.
\end{proof}

\begin{remark}
A much simplied proof of the above theorem could also be obtained by
employing a known property from nonstandard set theory. Precisely, let $%
\mathfrak{M}$ be a given model of nonstandard analysis, and let $\alpha\in{}%
^*I$ be such that the generated ultrafilter $\mathfrak{U}_\alpha$ is
countably incomplete. Then the model of nonstandard analysis determined by $%
\mathfrak{U}_\alpha$ is isomorphic to the elementary submodel $\mathfrak{M}%
[\alpha]\prec\mathfrak{M}$ whose universe is given by the elements that are 
\emph{standard relative to $\alpha$}, \emph{i.e.} by the elements of the
form ${}^*\!f(\alpha)$ where $f$ is any function defined on $I$ (see \cite[%
\S 6]{11hr} and references therein). By working inside $\mathfrak{M}[\alpha]$%
, one can directly use the equivalence of Theorem \ref{goodness} to derive
Theorem \ref{goodness2}.
\end{remark}

\section{Ultrafilters generated by pairs}

As in Section \ref{sec-uequivalence}, we now work in a fixed $\mathfrak{c}^+$%
-\emph{saturated} model of nonstandard analysis, and extend the $u$%
-equivalence to pairs.

\begin{definition}
The \emph{ultrafilter generated}%
\index{ultrafilter!generated} by an ordered pair $(\alpha,\beta)%
\in{}^*\mathbb{N}\times{}^*\mathbb{N}$ is the family 
\begin{equation*}
\mathfrak{U}_{(\alpha,\beta)}\ =\ \{X\subseteq\mathbb{N}\times\mathbb{N}%
\mid(\alpha,\beta)\in{}^*\!X\}.
\end{equation*}
\end{definition}

The $\mathfrak{c}^+$-\emph{saturation} property guarantees that every
ultrafilter on $\mathbb{N}\times\mathbb{N}$ is generated by some pair $%
(\alpha,\beta)\in{}^*\mathbb{N}\times{}^*\mathbb{N}$. The $u$-equivalence ${%
\,{\sim}_{{}_{\!\!\!\!\! u}}\;}$ relation on ${}^*\mathbb{N}\times{}^*%
\mathbb{N}$ is defined in exactly in the same way as it was defined on ${}^*%
\mathbb{N}$, that is we set $(\alpha,\beta){\,{\sim}_{{}_{\!\!\!\!\! u}}\;}%
(\alpha^{\prime },\beta^{\prime })$ when $\mathfrak{U}_{(\alpha,\beta)}=%
\mathfrak{U}_{(\alpha^{\prime },\beta^{\prime })}$. We recall that the \emph{%
Cartesian product of filters} 
\begin{equation*}
\mathcal{U}\times\mathcal{V}\ =\ \{A\times B\mid A\in\mathcal{U}, B\in%
\mathcal{V}\}
\end{equation*}
is a filter; however, if $\mathcal{U}$ and $\mathcal{V}$ are non-principal
ultrafilters, then $\mathcal{U}\times\mathcal{V}$ is \emph{not} an
ultrafilter, and indeed there may be plenty of ultrafilters $\mathcal{W}%
\supset\mathcal{U}\times\mathcal{V}$ (see Remark \ref{manyextensions} below).

A canonical class of ultrafilters on the Cartesian product is given by the
so-called tensor products: If $\mathcal{U}$ and $\mathcal{V}$ are
ultrafilters on $\mathbb{N}$, the \emph{tensor product} $\mathcal{U}\otimes%
\mathcal{V}$ is the ultrafilter on $\mathbb{N}\times\mathbb{N}$ defined by
setting: 
\begin{equation*}
X\in\mathcal{U}\otimes\mathcal{V}\ \Longleftrightarrow\ \{n\mid\{m\mid
(n,m)\in X\}\in\mathcal{V}\}\in\mathcal{U}.
\end{equation*}

It is easily seen that $\mathcal{U}\otimes\mathcal{V}\supseteq\mathcal{U}%
\times\mathcal{V}$. We recall that tensor products are not commutative in
all non-trivial cases; indeed, if we denote by $\Delta^+=\{(n,m)\mid n<m\}$
then $\Delta^+\in \mathcal{U}\otimes\mathcal{V}$ and $\Delta^+\notin\mathcal{%
V}\otimes\mathcal{U}$ for all non-principal $\mathcal{U}$ and $\mathcal{V}$.

A first easy observation about pairs is the following.

\begin{proposition}
\label{pairsandproducts} $\mathfrak{U}_{(\alpha,\beta)}\supseteq\mathfrak{U}%
_\alpha\times\mathfrak{U}_\beta$.
\end{proposition}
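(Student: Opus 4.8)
The plan is to unwind the definitions and reduce the containment $\mathfrak{U}_{(\alpha,\beta)}\supseteq\mathfrak{U}_\alpha\times\mathfrak{U}_\beta$ to an elementary transfer argument. Recall that $\mathfrak{U}_\alpha\times\mathfrak{U}_\beta$ consists exactly of sets of the form $A\times B$ with $A\in\mathfrak{U}_\alpha$ and $B\in\mathfrak{U}_\beta$; so it suffices to show that every such $A\times B$ lies in $\mathfrak{U}_{(\alpha,\beta)}$, i.e. that $(\alpha,\beta)\in{}^*(A\times B)$.

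First I would observe that hyper-extension commutes with Cartesian product: ${}^*(A\times B)={}^*\!A\times{}^*B$. This is a standard consequence of \emph{transfer} applied to the first-order statement $\forall x\,\forall y\,\big((x,y)\in A\times B\leftrightarrow x\in A\wedge y\in B\big)$, which transfers to $\forall x\,\forall y\,\big((x,y)\in{}^*(A\times B)\leftrightarrow x\in{}^*\!A\wedge y\in{}^*B\big)$. Then the desired membership becomes transparent: from $A\in\mathfrak{U}_\alpha$ we get $\alpha\in{}^*\!A$ by definition of $\mathfrak{U}_\alpha$, and from $B\in\mathfrak{U}_\beta$ we get $\beta\in{}^*B$; hence $(\alpha,\beta)\in{}^*\!A\times{}^*B={}^*(A\times B)$, which says precisely that $A\times B\in\mathfrak{U}_{(\alpha,\beta)}$.

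Finally, since $\mathfrak{U}_{(\alpha,\beta)}$ is a filter (being an ultrafilter on $\mathbb{N}\times\mathbb{N}$) it is closed under supersets; but in fact we do not even need this, because $\mathfrak{U}_\alpha\times\mathfrak{U}_\beta$ was defined to be exactly the family $\{A\times B\mid A\in\mathfrak{U}_\alpha,\ B\in\mathfrak{U}_\beta\}$, and we have shown every member of that family belongs to $\mathfrak{U}_{(\alpha,\beta)}$. This gives the containment.

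There is really no obstacle here; the only point requiring a moment's care is the identity ${}^*(A\times B)={}^*\!A\times{}^*B$, and even that is a routine instance of transfer once one is careful that ordered pairs are available as first-order objects in the setting. Everything else is pure bookkeeping with the definitions of $\mathfrak{U}_\alpha$, $\mathfrak{U}_\beta$, $\mathfrak{U}_{(\alpha,\beta)}$, and the Cartesian product of filters.
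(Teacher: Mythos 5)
Your proof is correct and follows essentially the same route as the paper's: from $A\in\mathfrak{U}_\alpha$ and $B\in\mathfrak{U}_\beta$ one gets $(\alpha,\beta)\in{}^*\!A\times{}^*B={}^*(A\times B)$, hence $A\times B\in\mathfrak{U}_{(\alpha,\beta)}$. You merely spell out the transfer argument for ${}^*(A\times B)={}^*\!A\times{}^*B$ that the paper labels ``trivial.''
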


\begin{proof}
If $A\in\mathfrak{U}_\alpha$ and $B\in\mathfrak{U}_\beta$, then trivially $%
(\alpha,\beta)\in{}^*\!A\times{}^*B={}^*(A\times B)$, \emph{i.e.}, $A\times
B\in\mathfrak{U}_{(\alpha,\beta)}$.
\end{proof}

It is also easy to improve on the above property, and characterize those
products of ultrafilters that are contained in a given ultrafilter on $%
\mathbb{N}\times\mathbb{N}$.

\begin{proposition}
\label{productofultrafilters} $\mathfrak{U}_{(\alpha,\beta)}\supseteq%
\mathfrak{U}_\gamma\times\mathfrak{U}_\delta$ if and only if $\alpha{\,{\sim}%
_{{}_{\!\!\!\!\! u}}\;}\gamma$ and $\beta{\,{\sim}_{{}_{\!\!\!\!\! u}}\;}%
\delta$. In consequence, $(\alpha,\beta){\,{\sim}_{{}_{\!\!\!\!\! u}}\;}%
(\gamma,\delta)\Rightarrow \alpha{\,{\sim}_{{}_{\!\!\!\!\! u}}\;}\gamma$ and 
$\beta{\,{\sim}_{{}_{\!\!\!\!\! u}}\;}\delta$.
\end{proposition}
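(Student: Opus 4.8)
The plan is to prove the equivalence directly by unpacking the definitions, using the key fact that for a single coordinate, $\alpha \ueq \gamma$ holds if and only if $\alpha$ and $\gamma$ cannot be separated by any hyper-extension ${}^*\!A$ with $A \subseteq \mathbb{N}$. For the forward direction, suppose $\mathfrak{U}_{(\alpha,\beta)} \supseteq \mathfrak{U}_\gamma \times \mathfrak{U}_\delta$. To show $\alpha \ueq \gamma$, I would take an arbitrary $A \in \mathfrak{U}_\gamma$, so that $A \times \mathbb{N} \in \mathfrak{U}_\gamma \times \mathfrak{U}_\delta \subseteq \mathfrak{U}_{(\alpha,\beta)}$; this means $(\alpha,\beta) \in {}^*(A \times \mathbb{N}) = {}^*\!A \times {}^*\mathbb{N}$, hence $\alpha \in {}^*\!A$, i.e. $A \in \mathfrak{U}_\alpha$. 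Thus $\mathfrak{U}_\gamma \subseteq \mathfrak{U}_\alpha$, and since both are ultrafilters, $\mathfrak{U}_\gamma = \mathfrak{U}_\alpha$, i.e. $\alpha \ueq \gamma$. The symmetric argument using $\mathbb{N} \times B$ for $B \in \mathfrak{U}_\delta$ gives $\beta \ueq \delta$.

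For the reverse direction, assume $\alpha \ueq \gamma$ and $\beta \ueq \delta$. Given a basic rectangle $A \times B$ with $A \in \mathfrak{U}_\gamma$ and $B \in \mathfrak{U}_\delta$, I need $A \times B \in \mathfrak{U}_{(\alpha,\beta)}$. But $\alpha \ueq \gamma$ gives $A \in \mathfrak{U}_\alpha$, i.e. $\alpha \in {}^*\!A$; similarly $\beta \in {}^*B$; hence $(\alpha,\beta) \in {}^*\!A \times {}^*B = {}^*(A \times B)$, which is exactly $A \times B \in \mathfrak{U}_{(\alpha,\beta)}$. This establishes $\mathfrak{U}_\gamma \times \mathfrak{U}_\delta \subseteq \mathfrak{U}_{(\alpha,\beta)}$.

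The final consequence follows immediately: if $(\alpha,\beta) \ueq (\gamma,\delta)$, then $\mathfrak{U}_{(\alpha,\beta)} = \mathfrak{U}_{(\gamma,\delta)} \supseteq \mathfrak{U}_\gamma \times \mathfrak{U}_\delta$ by Proposition~\ref{pairsandproducts}, so the equivalence just proved yields $\alpha \ueq \gamma$ and $\beta \ueq \delta$.

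There is no real obstacle here — this is essentially a bookkeeping exercise once one observes the trick of testing membership against "cylinder" sets $A \times \mathbb{N}$ and $\mathbb{N} \times B$. The only point that needs a moment's care is invoking the fact that containment of one ultrafilter in another forces equality, which legitimizes passing from $\mathfrak{U}_\gamma \subseteq \mathfrak{U}_\alpha$ to $\alpha \ueq \gamma$; everything else is a direct transfer computation with ${}^*(A \times B) = {}^*\!A \times {}^*B$.
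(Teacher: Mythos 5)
Your proof is correct and follows essentially the same route as the paper: both arguments test $\mathfrak{U}_{(\alpha,\beta)}$ against cylinder sets of the form $A\times\mathbb{N}$, with the only cosmetic difference being that you argue the forward direction directly (via $\mathfrak{U}_\gamma\subseteq\mathfrak{U}_\alpha$ plus maximality of ultrafilters) while the paper argues the contrapositive with a separating set $A^c\times\mathbb{N}$. The reverse direction and the final consequence are handled identically, the paper simply citing Proposition~\ref{pairsandproducts} where you recompute $(\alpha,\beta)\in{}^*\!A\times{}^*B$ by hand.
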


\begin{proof}
If $\alpha{\,{\sim}_{{}_{\!\!\!\!\! u}}\;}\gamma$ and $\beta{\,{\sim}%
_{{}_{\!\!\!\!\! u}}\;}\delta$, then $\mathfrak{U}_{(\alpha,\beta)}\supseteq%
\mathfrak{U}_\alpha\times\mathfrak{U}_\beta=\mathfrak{U}_\gamma\times%
\mathfrak{U}_\delta$. Conversely, assume that $\alpha{\,{\not\sim}%
_{{}_{\!\!\!\! u}}\,}\gamma$ or $\beta{\,{\not\sim}_{{}_{\!\!\!\! u}}\,}%
\delta$, say $\alpha{\,{\not\sim}_{{}_{\!\!\!\! u}}\,}\gamma$, and pick $%
A\subseteq\mathbb{N}$ such that $\alpha\in{}^*\!A$ and $\gamma\notin{}^*\!A$%
. Then $A^c\times\mathbb{N}\in\mathfrak{U}_\gamma\times\mathfrak{U}_\delta$
because $\gamma\in{}^*\!A^c$ and trivially $\delta\in{}^*\mathbb{N}$, but $%
A^c\times\mathbb{N}\notin\mathfrak{U}_{(\alpha,\beta)}$ because $%
(\alpha,\beta)\notin{}^*(A^c\times\mathbb{N})$. Finally, if $(\alpha,\beta){%
\,{\sim}_{{}_{\!\!\!\!\! u}}\;}(\gamma,\delta)$ then $\mathfrak{U}%
_{(\alpha,\beta)}=\mathfrak{U}_{(\gamma,\delta)}\supseteq\mathfrak{U}%
_\gamma\times\mathfrak{U}_\delta$, and so by what just proved above, it must
be $\alpha{\,{\sim}_{{}_{\!\!\!\!\! u}}\;}\gamma$ and $\beta{\,{\sim}%
_{{}_{\!\!\!\!\! u}}\;}\delta$.
\end{proof}

The above proposition can be reformulated in ``standard" terms as follows:

\begin{proposition}
Let $\mathcal{U},\mathcal{V}$ be ultrafilters on $\mathbb{N}$ and let $%
\mathcal{W}$ be an ultrafilter on $\mathbb{N}\times\mathbb{N}$. Then $%
\mathcal{W}\supseteq\mathcal{U}\times\mathcal{V}$ if and only if $\mathcal{U}%
=\pi_1(\mathcal{W})$ and $\mathcal{V}=\pi_2(\mathcal{W})$ where $\pi_1$ and $%
\pi_2$ are the canonical projections on the first and second coordinate,
respectively.
\end{proposition}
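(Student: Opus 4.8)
The plan is to translate this into the nonstandard language already developed and invoke the previous proposition. First, I would pick $\alpha,\beta\in{}^*\mathbb{N}$ such that $(\alpha,\beta)$ generates $\mathcal{W}$, which is possible by $\mathfrak{c}^+$-saturation; set $\mathcal{U}'=\mathfrak{U}_\alpha$ and $\mathcal{V}'=\mathfrak{U}_\beta$. The key observation is that the projections translate into $u$-equivalence data on the coordinates: since $\pi_i:\mathbb{N}\times\mathbb{N}\to\mathbb{N}$ is a function, we have ${}^*\pi_1(\alpha,\beta)=\alpha$ and ${}^*\pi_2(\alpha,\beta)=\beta$, and hence $\pi_1(\mathcal{W})=\pi_1(\mathfrak{U}_{(\alpha,\beta)})=\mathfrak{U}_{{}^*\pi_1(\alpha,\beta)}=\mathfrak{U}_\alpha$, using the earlier proposition that $f(\mathfrak{U}_\xi)=\mathfrak{U}_{{}^*\!f(\xi)}$; similarly $\pi_2(\mathcal{W})=\mathfrak{U}_\beta$.

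With this in hand, both directions follow from Proposition \ref{productofultrafilters}. For the forward direction, suppose $\mathcal{W}\supseteq\mathcal{U}\times\mathcal{V}$, i.e. $\mathfrak{U}_{(\alpha,\beta)}\supseteq\mathcal{U}\times\mathcal{V}$. Writing $\mathcal{U}=\mathfrak{U}_\gamma$ and $\mathcal{V}=\mathfrak{U}_\delta$ for suitable $\gamma,\delta$ (again by $\mathfrak{c}^+$-saturation), Proposition \ref{productofultrafilters} gives $\alpha\ueq\gamma$ and $\beta\ueq\delta$, so $\mathcal{U}=\mathfrak{U}_\gamma=\mathfrak{U}_\alpha=\pi_1(\mathcal{W})$ and $\mathcal{V}=\mathfrak{U}_\delta=\mathfrak{U}_\beta=\pi_2(\mathcal{W})$. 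For the converse, if $\mathcal{U}=\pi_1(\mathcal{W})=\mathfrak{U}_\alpha$ and $\mathcal{V}=\pi_2(\mathcal{W})=\mathfrak{U}_\beta$, then directly from Proposition \ref{pairsandproducts}, $\mathcal{W}=\mathfrak{U}_{(\alpha,\beta)}\supseteq\mathfrak{U}_\alpha\times\mathfrak{U}_\beta=\mathcal{U}\times\mathcal{V}$.

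Alternatively, the whole statement can be proved purely combinatorially, bypassing the nonstandard machinery: $\mathcal{W}\supseteq\mathcal{U}\times\mathcal{V}$ means $A\times\mathbb{N}\in\mathcal{W}$ and $\mathbb{N}\times B\in\mathcal{W}$ for all $A\in\mathcal{U},B\in\mathcal{V}$; since $\pi_1^{-1}(A)=A\times\mathbb{N}$, this says exactly $A\in\pi_1(\mathcal{W})$, and as $\mathcal{U}$ and $\pi_1(\mathcal{W})$ are both ultrafilters, $\mathcal{U}\subseteq\pi_1(\mathcal{W})$ forces equality; symmetrically for $\mathcal{V}$. Conversely, if $\mathcal{U}=\pi_1(\mathcal{W})$ and $\mathcal{V}=\pi_2(\mathcal{W})$, then for $A\in\mathcal{U}$, $B\in\mathcal{V}$ we get $A\times\mathbb{N}=\pi_1^{-1}(A)\in\mathcal{W}$ and $\mathbb{N}\times B=\pi_2^{-1}(B)\in\mathcal{W}$, hence $A\times B=(A\times\mathbb{N})\cap(\mathbb{N}\times B)\in\mathcal{W}$.

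I expect no serious obstacle here; the only point requiring a little care is making explicit that $\pi_i(\mathcal{W})=\mathfrak{U}_\alpha$ respectively $\mathfrak{U}_\beta$ (that is, correctly invoking $f(\mathfrak{U}_\xi)=\mathfrak{U}_{{}^*\!f(\xi)}$ with $f=\pi_i$ on $\mathbb{N}\times\mathbb{N}$), and noting that an inclusion of ultrafilters is automatically an equality. Given how directly this reduces to Proposition \ref{productofultrafilters}, I would present the short nonstandard argument as the main proof.
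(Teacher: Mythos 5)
Your main argument is correct and is essentially identical to the paper's proof: pick $(\alpha,\beta)$ generating $\mathcal{W}$, identify $\pi_1(\mathcal{W})=\mathfrak{U}_\alpha$ and $\pi_2(\mathcal{W})=\mathfrak{U}_\beta$ via $f(\mathfrak{U}_\xi)=\mathfrak{U}_{{}^*\!f(\xi)}$, and then invoke Proposition \ref{productofultrafilters}. The purely combinatorial alternative you sketch is also valid, but as the main route you have reproduced the paper's argument.
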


\begin{proof}
Pick $\alpha,\beta,\gamma,\delta\in{}^*\mathbb{N}$ such that $\mathcal{W}=%
\mathfrak{U}_{(\alpha,\beta)}$, $\mathcal{U}=\mathfrak{U}_\gamma$ and $%
\mathcal{V}=\mathfrak{U}_\delta$. By Proposition \ref{ueq1}, $\pi_1(\mathcal{%
W})=\mathfrak{U}_{\,{}^*\!\pi_1(\alpha,\beta)}=\mathfrak{U}_\alpha$ and
similarly $\pi_2(\mathcal{W})=\mathfrak{U}_\beta$. Then apply the previous
proposition.
\end{proof}

\begin{remark}
\label{manyextensions} The implication $(\alpha,\beta){\,{\sim}%
_{{}_{\!\!\!\!\! u}}\;}(\gamma,\delta)\Rightarrow \alpha{\,{\sim}%
_{{}_{\!\!\!\!\! u}}\;}\gamma$ and $\beta{\,{\sim}_{{}_{\!\!\!\!\! u}}\;}%
\delta$ cannot be reversed. Indeed, it is well known that for every
non-principal ultrafilters $\mathcal{U},\mathcal{V}$ on $\mathbb{N}$ there
are at least two different ultrafilters $\mathcal{W}\supset\mathcal{U}\times%
\mathcal{V}$, namely $\mathcal{U}\otimes\mathcal{V}$ and $\sigma(\mathcal{V}%
\otimes\mathcal{U})$ where $\sigma(n,m)=(m,n)$ is the map that permutes the
coordinates. Actually, provided there are no P-points RK-below $\mathcal{U}$
or $\mathcal{V}$, there exist infinitely many $\mathcal{W}\supset\mathcal{U}%
\times\mathcal{V}$ (see \cite{11bm} and references therein).
\end{remark}

About the existence of pairs of hypernatural numbers that generate
ultrafilters $\mathcal{W}\supseteq\mathcal{U}\times\mathcal{V}$, the
following result holds.

\begin{proposition}
\label{karel} Let $\mathcal{U},\mathcal{V}$ be ultrafilters on $\mathbb{N}$,
and let $X\in{}^*\mathcal{U}$ and $Y\in{}^*\mathcal{V}$. Then for every
ultrafilter $\mathcal{W}\supset\mathcal{U}\times\mathcal{V}$, there exist $%
\alpha\in X$ and $\beta\in Y$ such that $\mathfrak{U}_{(\alpha,\beta)}=%
\mathcal{W}$.
\end{proposition}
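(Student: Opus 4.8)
The plan is to produce the pair $(\alpha,\beta)$ by $\mathfrak{c}^+$-\emph{saturation}: I would write down the obvious family of internal constraints that such a pair must satisfy, check that it has cardinality $\le\mathfrak{c}$, and reduce the whole statement to verifying that this family has the finite intersection property.

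Concretely, first recall that since $X\in{}^*\mathcal{U}$ and the members of ${}^*\mathcal{U}\subseteq{}^*\mathcal{P}(\mathbb{N})$ are precisely the internal subsets of ${}^*\mathbb{N}$, the object $X$ is an internal subset of ${}^*\mathbb{N}$, and likewise $Y$. Hence $X\times{}^*\mathbb{N}$, ${}^*\mathbb{N}\times Y$ and all the hyper-extensions ${}^*Z$ (for $Z\subseteq\mathbb{N}\times\mathbb{N}$) are internal subsets of ${}^*\mathbb{N}\times{}^*\mathbb{N}={}^*(\mathbb{N}\times\mathbb{N})$. I would then consider the family of internal sets
\begin{equation*}
\mathcal{F}\ =\ \{\,{}^*Z\mid Z\in\mathcal{W}\,\}\ \cup\ \{\,X\times{}^*\mathbb{N}\,\}\ \cup\ \{\,{}^*\mathbb{N}\times Y\,\},
\end{equation*}
whose cardinality is at most $|\mathcal{P}(\mathbb{N}\times\mathbb{N})|=\mathfrak{c}$. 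A point $(\alpha,\beta)\in\bigcap\mathcal{F}$ is exactly a pair with $\alpha\in X$, $\beta\in Y$ and $(\alpha,\beta)\in{}^*Z$ for all $Z\in\mathcal{W}$, i.e.\ $\mathcal{W}\subseteq\mathfrak{U}_{(\alpha,\beta)}$, hence $\mathcal{W}=\mathfrak{U}_{(\alpha,\beta)}$ since both are ultrafilters. So by $\mathfrak{c}^+$-\emph{saturation} it is enough to show that $\mathcal{F}$ has the FIP.

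For the FIP: as $\{{}^*Z\mid Z\in\mathcal{W}\}$ is closed under finite intersections ($\mathcal{W}$ being a filter) and $\mathbb{N}\times\mathbb{N}\in\mathcal{W}$, it suffices to prove that for every single $Z\in\mathcal{W}$
\begin{equation*}
{}^*Z\ \cap\ (X\times{}^*\mathbb{N})\ \cap\ ({}^*\mathbb{N}\times Y)\ =\ {}^*Z\cap(X\times Y)\ \neq\ \emptyset .
\end{equation*}
This is where the hypothesis $\mathcal{W}\supseteq\mathcal{U}\times\mathcal{V}$ enters. I would read it as the first-order property of the parameters $\mathcal{U},\mathcal{V},\mathcal{W}$ stating ``$A\times B\in\mathcal{W}$ for all $A\in\mathcal{U}$ and all $B\in\mathcal{V}$'', in which the Cartesian product operation occurs; transferring it yields ``$A\times B\in{}^*\mathcal{W}$ for all internal $A\in{}^*\mathcal{U}$ and all internal $B\in{}^*\mathcal{V}$''. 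Instantiating at $A=X$ and $B=Y$ gives $X\times Y\in{}^*\mathcal{W}$. Since also ${}^*Z\in{}^*\mathcal{W}$ (because $Z\in\mathcal{W}$), transfer of ``$\mathcal{W}$ is closed under intersection and contains no empty set'' gives ${}^*Z\cap(X\times Y)\in{}^*\mathcal{W}$, in particular non-empty, as wanted.

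The only step needing care — really the one place the argument could be botched — is the transfer producing $X\times Y\in{}^*\mathcal{W}$: the inclusion $\mathcal{U}\times\mathcal{V}\subseteq\mathcal{W}$ must be phrased so that its transfer ranges over \emph{all} internal members of ${}^*\mathcal{U}$ and ${}^*\mathcal{V}$, and then genuinely instantiated at the given $X,Y$, which need not be standard sets ${}^*A$, ${}^*B$; merely knowing the inclusion for standard sets is nothing more than the original hypothesis and would not suffice, as $X$ and $Y$ need not be hyper-extensions. Everything else is the routine saturation-plus-FIP pattern already used above, \emph{e.g.}\ in Propositions \ref{ueq1} and \ref{ultramaponto}.
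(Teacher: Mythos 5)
Your proposal is correct and follows essentially the same route as the paper: both arguments transfer the compatibility of $\mathcal{U}\times\mathcal{V}$ with $\mathcal{W}$ so as to instantiate it at the internal (generally non-standard) sets $X$ and $Y$, deduce that the relevant family of internal sets has the FIP, and conclude by $\mathfrak{c}^+$-saturation. The only cosmetic difference is that the paper transfers the statement ``$(U\times V)\cap W\ne\emptyset$'' in all three variables at once, whereas you first obtain $X\times Y\in{}^*\mathcal{W}$ and then invoke the transferred filter properties of ${}^*\mathcal{W}$; your closing caveat about needing the transferred inclusion to range over all internal members of ${}^*\mathcal{U}$ and ${}^*\mathcal{V}$ is exactly the right point of care.
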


\begin{proof}
Since $\mathcal{W}\supset\mathcal{U}\times\mathcal{V}$, the intersection $%
(U\times V)\cap W\ne\emptyset$ is non-empty for every $U\in\mathcal{U}$, $%
V\in\mathcal{V}$, and $W\in\mathcal{W}$. Then, by \emph{transfer}, $(X\times
Y)\cap Z\ne\emptyset$ for all $Z\in{}^*\mathcal{W}$. In particular, the
family $\{(X\times Y)\cap{}^*W\mid W\in\mathcal{W}\}$ has the finite
intersection property. By $\mathfrak{c}^+$-\emph{saturation}, we can pick a
pair $(\alpha,\beta)\in X\times Y$ such that $(\alpha,\beta)\in{}^*W$ for
all $W\in\mathcal{W}$, so that $\mathfrak{U}_{(\alpha,\beta)}=\mathcal{W}$.
(This proof was communicated to the author by Karel Hrb\'a\u{c}ek, and it is
reproduced here under his permission.)
\end{proof}

Now let us fix some useful notation. For $X\subseteq\mathbb{N}\times\mathbb{N%
}$, $n\in\mathbb{N}$, and $\xi\in{}^*\mathbb{N}$:

\begin{itemize}
\item $X_n=\{\,m\in\mathbb{N}\mid (n,m)\in X\}$ is the vertical $n$-fiber of 
$X$;

\item ${}^*\!X_\xi=\{\zeta\in{}^*\mathbb{N}\mid(\xi,\zeta)\in{}^*\!X\}$ is
the vertical $\xi$-fiber of ${}^*\!X$.
\end{itemize}

Notice that ${}^*\!X_\xi={}^*\chi(\xi)$ is the value taken at $\xi$ by the
hyper-extension of the sequence $\chi(n)=X_n$. Notice also that for finite $%
k\in\mathbb{N}$, one has ${}^*\!X_k={}^*(X_k)$.

It directly follows from the definitions that 
\begin{equation*}
X\in\mathfrak{U}_\alpha\otimes\mathfrak{U}_\beta\ \Longleftrightarrow\ \alpha%
\in{}^*\{n\mid X_n\in\mathfrak{U}_\beta\}\ \Longleftrightarrow\
{}^*\!X_\alpha\in{}^*\mathfrak{U}_\beta.
\end{equation*}

\begin{theorem}
\label{alltensorpairs} Let $\alpha,\beta\in{}^*\mathbb{N}$ be infinite
numbers. Then the following properties are equivalent:

\begin{enumerate}
\item $\mathfrak{U}_{(\alpha,\beta)}=\mathfrak{U}_\alpha\otimes\mathfrak{U}%
_\beta$;

\item $(\alpha,\beta)$ generates a tensor product;

\item For every $F:\mathbb{N}\to\mathcal{P}(\mathbb{N})$, if $\beta\in{}%
^{*}\!F (\alpha)$, then ${}^{*}\!F (\alpha)\in{}^*\mathfrak{U}_\beta$;

\item For every $F:\mathbb{N}\to\mathcal{P}(\mathbb{N})$, if ${}^{*}\!F
(\alpha)\in{}^*\mathfrak{U}_\beta$, then $\beta\in{}^{*}\!F (\alpha)$;

\item For every $X\subseteq\mathbb{N}\times\mathbb{N}$, if $(n,\beta)\in{}%
^*\!X$ for all $n\in\mathbb{N}$, then $(\alpha,\beta)\in{}^*\!X$;

\item For every $X\subseteq\mathbb{N}\times\mathbb{N}$, if $(\alpha,\beta)%
\in{}^*\!X$, then $(n,\beta)\in{}^*\!X$ for some $n\in\mathbb{N}$;

\item For every $f:\mathbb{N}\to\mathbb{N}$, if ${}^*\!f(\beta)\notin\mathbb{%
N}$, then ${}^*\!f(\beta)>\alpha$.
\end{enumerate}
\end{theorem}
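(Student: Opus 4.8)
The plan is to prove the cycle of implications establishing all seven equivalences, using throughout the basic translation dictionary between membership in hyper-extensions and membership in the generated ultrafilter, together with the identity $X\in\mathfrak{U}_\alpha\otimes\mathfrak{U}_\beta\Leftrightarrow{}^*\!X_\alpha\in{}^*\mathfrak{U}_\beta$ recorded just before the statement. First I would observe that $(1)\Leftrightarrow(2)$ is essentially definitional once one knows that $\pi_i$ applied to $\mathfrak{U}_{(\alpha,\beta)}$ recovers $\mathfrak{U}_\alpha$ and $\mathfrak{U}_\beta$ (Proposition \ref{ueq1}): if $(\alpha,\beta)$ generates \emph{some} tensor product $\mathcal{U}\otimes\mathcal{V}$, then projecting forces $\mathcal{U}=\mathfrak{U}_\alpha$, $\mathcal{V}=\mathfrak{U}_\beta$, hence $(1)$.

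Next I would handle the reformulations $(1)\Leftrightarrow(5)$, $(1)\Leftrightarrow(6)$, and $(1)\Leftrightarrow(3)$, $(1)\Leftrightarrow(4)$, which are really two pairs of dual statements. For $(1)\Rightarrow(6)$: if $(\alpha,\beta)\in{}^*\!X$ then $X\in\mathfrak{U}_{(\alpha,\beta)}=\mathfrak{U}_\alpha\otimes\mathfrak{U}_\beta$, so $\{n\mid X_n\in\mathfrak{U}_\beta\}\in\mathfrak{U}_\alpha$; in particular this set is nonempty, giving some finite $n$ with $X_n\in\mathfrak{U}_\beta$, i.e.\ $(n,\beta)\in{}^*\!X$ (using ${}^*\!X_n={}^*(X_n)$ for finite $n$). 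Conversely, to prove $(6)\Rightarrow(1)$, I would show $\mathfrak{U}_{(\alpha,\beta)}\subseteq\mathfrak{U}_\alpha\otimes\mathfrak{U}_\beta$ (the reverse inclusion is automatic since both are ultrafilters, or one argues directly): given $X\in\mathfrak{U}_{(\alpha,\beta)}$, the goal is $\{n\mid X_n\in\mathfrak{U}_\beta\}\in\mathfrak{U}_\alpha$; if not, then its complement $\{n\mid X_n\notin\mathfrak{U}_\beta\}=\{n\mid \mathbb{N}\setminus X_n\in\mathfrak{U}_\beta\}$ lies in $\mathfrak{U}_\alpha$, and one builds a set $Y\subseteq\mathbb{N}\times\mathbb{N}$ witnessing a failure of $(6)$ — this is where care is needed. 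Statements $(5)$ and $(6)$ are contrapositive-dual (apply the hypothesis to the complement $\mathbb{N}\times\mathbb{N}\setminus X$), and similarly $(3),(4)$ are dual; moreover $(3)$ is just $(6)$ rewritten via $F(n)=X_n$, since ${}^{*}\!F(\alpha)={}^*\!X_\alpha$ and ${}^{*}\!F(\alpha)\in{}^*\mathfrak{U}_\beta$ is exactly $X\in\mathfrak{U}_\alpha\otimes\mathfrak{U}_\beta$. So the substantive content is one implication in each of the two dual pairs, plus the translation lemmas.

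Finally, for $(7)$, which singles out the case-study of \emph{function} values rather than arbitrary fiber-sets: I would prove $(4)\Rightarrow(7)$ and $(7)\Rightarrow(3)$ (or whichever direction is cleaner). Given $f:\mathbb{N}\to\mathbb{N}$ with ${}^*\!f(\beta)\notin\mathbb{N}$, consider $F(n)=\{m\mid f(m)>n\}$; then ${}^{*}\!F(\alpha)=\{m\mid{}^*\!f(m)>\alpha\}$ evaluated at the appropriate hyper-extension, and membership ${}^*\!f(\beta)>\alpha$ should translate into a fiber condition that $(3)$ or $(4)$ controls — the point being that $\{m\mid f(m)>n\}\in\mathfrak{U}_\beta$ for all finite $n$ precisely because ${}^*\!f(\beta)$ is infinite, and then the hypothesis upgrades this ``for all finite $n$'' to ``at $\alpha$''. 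The main obstacle I anticipate is bookkeeping the direction of the fibering — keeping straight whether one fibers $X$ over the first or second coordinate, and matching ${}^*\!X_\alpha={}^*\chi(\alpha)$ correctly against ${}^{*}\!F(\alpha)$ — together with the non-commutativity of $\otimes$ (the asymmetry between $\alpha$ and $\beta$ in the statement is real and must be respected throughout). Once the dictionary entries $X\in\mathfrak{U}_\alpha\otimes\mathfrak{U}_\beta\Leftrightarrow{}^*\!X_\alpha\in{}^*\mathfrak{U}_\beta\Leftrightarrow\alpha\in{}^*\{n\mid X_n\in\mathfrak{U}_\beta\}$ are pinned down, each implication becomes a short transfer-plus-ultrafilter argument with no appeal to saturation needed (the saturation hypothesis of this section is only used for \emph{existence} of such pairs, not for this equivalence).
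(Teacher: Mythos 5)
Your framework is sound and most of what you sketch does work: $(1)\Leftrightarrow(2)$ via projections, $(1)\Rightarrow(6)$, the dualities $(5)\leftrightarrow(6)$ and $(3)\leftrightarrow(4)$ by complementation, and the derivation of $(7)$ from $(5)$ via $X=\{(n,m)\mid n<f(m)\}$ are all essentially as in the paper. But there are two problems, one small and one fatal. The small one: $(3)$ is \emph{not} ``just $(6)$ rewritten''. Under $F(n)=X_n$, condition $(3)$ says that $(\alpha,\beta)\in{}^*\!X$ implies $\{n\mid X_n\in\mathfrak{U}_\beta\}\in\mathfrak{U}_\alpha$, i.e.\ the full inclusion $\mathfrak{U}_{(\alpha,\beta)}\subseteq\mathfrak{U}_\alpha\otimes\mathfrak{U}_\beta$, whereas $(6)$ only concludes that $\{n\mid X_n\in\mathfrak{U}_\beta\}$ is \emph{nonempty}. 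They are equivalent, but not by mere translation; the step you defer (``one builds a set $Y$\dots this is where care is needed'') is exactly the missing content. It can be completed: if $A=\{n\mid X_n\in\mathfrak{U}_\beta\}\notin\mathfrak{U}_\alpha$, take $Y=X\cap(A^c\times\mathbb{N})$; then $(\alpha,\beta)\in{}^*Y$ but $(n,\beta)\notin{}^*Y$ for every finite $n$, contradicting $(6)$. Done this way, your route is actually slightly more direct than the paper's, which never proves $(6)\Rightarrow(1)$ head-on but instead closes that loop through $(7)$.

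The fatal gap is the implication $(7)\Rightarrow(1)$ (equivalently $(7)\Rightarrow(3)$), which is the substantive content of the theorem and is credited to Puritz. Everything you write about $(7)$ --- that $\{m\mid f(m)>n\}\in\mathfrak{U}_\beta$ for all finite $n$ because ${}^*\!f(\beta)$ is infinite, and that ``the hypothesis upgrades `for all finite $n$' to `at $\alpha$'\,'' --- is the \emph{easy} direction, deriving $(7)$ from $(5)$. You name $(7)\Rightarrow(3)$ as something to be proved but supply no mechanism, and none of your dictionary entries provides one: $(7)$ quantifies only over functions $\mathbb{N}\to\mathbb{N}$ and the order relation, so one must manufacture a suitable function from an arbitrary $X\in\mathfrak{U}_\alpha\otimes\mathfrak{U}_\beta$. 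The paper's argument runs as follows: first $\alpha<\beta$ (apply $(7)$ to the identity), so it suffices to treat $X\subseteq\Delta^+=\{(n,m)\mid n<m\}$; then $F(m)=\{n\mid X_n\in\mathfrak{U}_\beta\ \&\ (n,m)\notin X\}$ is nonempty for every $m$, the function $f(m)=\min F(m)$ has ${}^*\!f(\beta)$ infinite, and $(7)$ yields $\alpha<{}^*\!f(\beta)=\min{}^{*}\!F(\beta)$, hence $\alpha\notin{}^{*}\!F(\beta)$, which unravels to $(\alpha,\beta)\in{}^*\!X$. Without this construction (or some substitute for it) condition $(7)$ is not tied back to the other six, and the chain of equivalences does not close.
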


\begin{proof}
$(1)\Leftrightarrow(2)$. One implication is trivial. Conversely, let us
assume that $\mathfrak{U}_{(\alpha,\beta)}=\mathfrak{U}_\gamma\otimes%
\mathfrak{U}_\delta$ for some $\gamma,\delta$. We have seen in Proposition %
\ref{pairsandproducts} that $\mathfrak{U}_{(\alpha,\beta)}$ extends $%
\mathfrak{U}_\alpha\times\mathfrak{U}_\beta$; moreover, the tensor product $%
\mathfrak{U}_\gamma\otimes\mathfrak{U}_\delta$ extends $\mathfrak{U}%
_\gamma\times\mathfrak{U}_\delta$. But then it must be $\mathfrak{U}_\alpha=%
\mathfrak{U}_\gamma$ and $\mathfrak{U}_\beta=\mathfrak{U}_\delta$, as
otherwise there would be disjoint sets in $\mathfrak{U}_{(\alpha,\beta)}$.

$(1)\Leftrightarrow(3)\Leftrightarrow(4)$. Given a function $F:\mathbb{N}\to%
\mathcal{P}(\mathbb{N})$, let 
\begin{equation*}
\Theta(F)\ =\ \{(n,m)\in\mathbb{N}\times\mathbb{N}\mid m\in F(n)\}
\end{equation*}
be the set of pairs whose vertical $n$-fibers $\Theta(F)_n=F(n)$. Then, we
have $\Theta(F)\in\mathfrak{U}_\alpha\otimes\mathfrak{U}_\beta%
\Leftrightarrow {}^*\Theta(F)_\alpha={}^{*}\!F (\alpha)\in{}^*\mathfrak{U}%
_\beta$. Moreover, $\beta\in{}^{*}\!F (\alpha)\Leftrightarrow (\alpha,\beta)%
\in{}^*\Theta(F)\Leftrightarrow \Theta(F)\in\mathfrak{U}_{(\alpha,\beta)}$.
Now notice that for every $X\subseteq\mathbb{N}\times\mathbb{N}$, one has $%
X=\Theta(F)$ where $F(n)=X_n$ is the sequence of the $n$-fibers of $X$. In
consequence, properties $(3)$ and $(4)$ are equivalent to the conditions $%
\mathfrak{U}_{(\alpha,\beta)}\subseteq\mathfrak{U}_\alpha\otimes\mathfrak{U}%
_\beta$ and $\mathfrak{U}_\alpha\otimes\mathfrak{U}_\beta\subseteq\mathfrak{U%
}_{(\alpha,\beta)}$, respectively. The proof is complete because inclusions
between ultrafilters imply equalities.

$(5)\Leftrightarrow(6)$. This is straightforward, because property $(5)$ for
a set $X$ is the contrapositive of property $(6)$ for the complement $X^c$.

$(1)\Rightarrow(5)$. Notice that $(n,\beta)\in{}^*\!X \Leftrightarrow \beta%
\in{}^*(X_n)\Leftrightarrow X_n\in\mathfrak{U}_\beta$. So, by the
hypothesis, the set $\{n\mid X_n\in\mathfrak{U}_\beta\}=\mathbb{N}$. As
trivially $\mathbb{N}\in\mathfrak{U}_\alpha$, we conclude that $X\in%
\mathfrak{U}_\alpha\otimes\mathfrak{U}_\beta=\mathfrak{U}_{(\alpha,\beta)}$,
and hence $(\alpha,\beta)\in{}^*\!X$.

$(5)\Rightarrow(7)$. Let $X=\{(n,m)\mid n<f(m)\}$. Since ${}^*\!f(\beta)$ is
infinite, we have that $(n,\beta)\in{}^*\!X$ for all finite $n\in\mathbb{N}$%
. But then $(\alpha,\beta)\in{}^*\!X$, \emph{i.e.} $\alpha<{}^*\!f(\beta)$.

$(7)\Rightarrow(1)$ (\cite{11pu2}, Th.\,3.4). The set $\Delta^+=\{(n,m)\mid
n<m\}$ belongs to $\mathfrak{U}_\alpha\otimes\mathfrak{U}_\beta$; moreover,
since $\alpha<\beta$, it is also $\Delta^+\in\mathfrak{U}_{(\alpha,\beta)}$.
Thus, it suffices to show that the implication $X\in\mathfrak{U}%
_\alpha\otimes\mathfrak{U}_\beta\Rightarrow X\in\mathfrak{U}%
_{(\alpha,\beta)} $ holds for all $X\subseteq\Delta^+$. By definition, $X\in%
\mathfrak{U}_\alpha\otimes\mathfrak{U}_\beta\Leftrightarrow \alpha\in{}%
^*\{n\mid X_n\in\mathfrak{U}_\beta\}$, and so $X_n\in\mathfrak{U}_\beta$ for
infinitely many $n$. In consequence, for every $m$ one can always find $n>m$%
, and hence $(n,m)\notin X$, such that $X_n\in\mathfrak{U}_\beta$. This
means that 
\begin{equation*}
F(m)=\{n\mid X_n\in\mathfrak{U}_\beta\ \&\ (n,m)\notin X\}\ne\emptyset.
\end{equation*}
If $f:\mathbb{N}\to\mathbb{N}$ is the function $f(m)=\min F(m)$, then the
number ${}^*\!f(\beta)$ is infinite. Indeed, if by contradiction $%
{}^*\!f(\beta)=k\in\mathbb{N}$, then we would have $k\in {}^{*}\!F (\beta)$,
that is $({}^*\!X)_k={}^*(X_k)\in{}^*\mathfrak{U}_\beta$ and $(k,\beta)%
\notin{}^*\!X$, and hence $X_k\in\mathfrak{U}_\beta$ and $X_k\notin\mathfrak{%
U}_\beta$, a contradiction. Now, by $(7)$ it follows that $%
\alpha<{}^*\!f(\beta)=\min {}^{*}\!F (\beta)$, and so $\alpha\notin{}^{*}\!F
(\beta)$. This means that it is not the case that both ${}^*\!X_\alpha\in{}^*%
\mathfrak{U}_\beta$ and $(\alpha,\beta)\notin{}^*\!X$. We reach the thesis $%
(\alpha,\beta)\in{}^*\!X$ by recalling that ${}^*\!X_\alpha\in{}^*\mathfrak{U%
}_\beta\Leftrightarrow X\in\mathfrak{U}_\alpha\otimes\mathfrak{U}_\beta$.
\end{proof}

\begin{definition}
We say that $(\alpha,\beta)\in{}^*\mathbb{N}\times{}^*\mathbb{N}$ is a \emph{%
tensor pair}%
\index{tensor pair} if it satisfies all the equivalent conditions in the
previous theorem.
\end{definition}

Notice that if $n\in\mathbb{N}$ is finite, then all pairs $(n,\beta)$ are
trivially tensor pairs. The property of generating tensor products is
preserved under a special class of maps.

\begin{proposition}
\label{imagetensor} If $(\alpha,\beta)$ is a tensor pair then for every $f,g:%
\mathbb{N}\to\mathbb{N}$ also $({}^*\!f(\alpha),{}^*\!g(\beta))$ is a tensor
pair. In ``standard" terms, this means that if $(f,g)$ is the map $%
(n,m)\mapsto(f(n),g(m))$, then for all ultrafilters $\mathcal{U}$ and $%
\mathcal{V}$ the image $(f,g)(\mathcal{U}\otimes\mathcal{V})=f(\mathcal{U}%
)\otimes g(\mathcal{V})$.
\end{proposition}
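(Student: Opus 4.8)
The plan is to check that $({}^*\!f(\alpha),{}^*\!g(\beta))$ satisfies condition $(5)$ of Theorem \ref{alltensorpairs} (equivalently condition $(6)$), after disposing of the degenerate cases. If ${}^*\!f(\alpha)\in\mathbb{N}$ or ${}^*\!g(\beta)\in\mathbb{N}$, then $({}^*\!f(\alpha),{}^*\!g(\beta))$ is trivially a tensor pair, since a pair with a finite coordinate always generates the corresponding tensor product (this is the observation recorded right after the definition of tensor pair, together with the identical computation for a finite second coordinate). So we may assume both ${}^*\!f(\alpha)$ and ${}^*\!g(\beta)$ are infinite; then $\alpha$ and $\beta$ are infinite as well (a finite input yields a finite value), so the tensor pair $(\alpha,\beta)$ genuinely satisfies the equivalent conditions of Theorem \ref{alltensorpairs}.

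Write $(f,g)\colon\mathbb{N}\times\mathbb{N}\to\mathbb{N}\times\mathbb{N}$ for the map $(n,m)\mapsto(f(n),g(m))$; by \emph{transfer} its hyper-extension sends $(\xi,\zeta)$ to $({}^*\!f(\xi),{}^*\!g(\zeta))$. To verify condition $(5)$ for $({}^*\!f(\alpha),{}^*\!g(\beta))$, let $Y\subseteq\mathbb{N}\times\mathbb{N}$ satisfy $(n,{}^*\!g(\beta))\in{}^*Y$ for every $n\in\mathbb{N}$, and set $X=(f,g)^{-1}(Y)=\{(n,m)\mid(f(n),g(m))\in Y\}$. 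By \emph{transfer}, $(\xi,\zeta)\in{}^*\!X\Leftrightarrow({}^*\!f(\xi),{}^*\!g(\zeta))\in{}^*Y$; since ${}^*\!f(n)=f(n)$ for finite $n$ and $f(n)\in\mathbb{N}$, the hypothesis on $Y$ gives $(n,\beta)\in{}^*\!X$ for every $n\in\mathbb{N}$. Applying condition $(5)$ of Theorem \ref{alltensorpairs} to the tensor pair $(\alpha,\beta)$ yields $(\alpha,\beta)\in{}^*\!X$, i.e.\ $({}^*\!f(\alpha),{}^*\!g(\beta))\in{}^*Y$. Hence $({}^*\!f(\alpha),{}^*\!g(\beta))$ satisfies condition $(5)$, and is therefore a tensor pair.

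For the ``standard'' reformulation, recall the identity $\mathfrak{U}_{{}^*\!f(\alpha)}=f(\mathfrak{U}_\alpha)$ proved earlier, whose proof (a chain of equivalences) applies verbatim to maps between arbitrary sets, in particular to $f$, to $g$, and to the product map $(f,g)$ on $\mathbb{N}\times\mathbb{N}$. Given ultrafilters $\mathcal{U},\mathcal{V}$ on $\mathbb{N}$, use $\mathfrak{c}^+$-\emph{saturation} to pick a pair $(\alpha,\beta)$ generating $\mathcal{U}\otimes\mathcal{V}$; projecting onto the coordinates (Proposition \ref{ueq1}) gives $\mathfrak{U}_\alpha=\pi_1(\mathcal{U}\otimes\mathcal{V})=\mathcal{U}$ and $\mathfrak{U}_\beta=\mathcal{V}$, whence $\mathfrak{U}_{(\alpha,\beta)}=\mathfrak{U}_\alpha\otimes\mathfrak{U}_\beta$, so $(\alpha,\beta)$ is a tensor pair. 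By the first part, $\mathfrak{U}_{({}^*\!f(\alpha),{}^*\!g(\beta))}=\mathfrak{U}_{{}^*\!f(\alpha)}\otimes\mathfrak{U}_{{}^*\!g(\beta)}=f(\mathcal{U})\otimes g(\mathcal{V})$, while on the other hand $\mathfrak{U}_{({}^*\!f(\alpha),{}^*\!g(\beta))}=(f,g)(\mathfrak{U}_{(\alpha,\beta)})=(f,g)(\mathcal{U}\otimes\mathcal{V})$; equating the two gives $(f,g)(\mathcal{U}\otimes\mathcal{V})=f(\mathcal{U})\otimes g(\mathcal{V})$.

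I do not anticipate any real obstacle here. The only genuine choice is which of the seven equivalent conditions of Theorem \ref{alltensorpairs} to use, and $(5)$--$(6)$ are the convenient ones precisely because, being phrased in terms of arbitrary subsets of $\mathbb{N}\times\mathbb{N}$, they pull back cleanly along $(f,g)$; the remaining bookkeeping — transfer of the product map and the fact that ${}^*\!f$ fixes finite inputs — is routine. (Alternatively one could bypass nonstandard methods altogether and prove the identity $(f,g)(\mathcal{U}\otimes\mathcal{V})=f(\mathcal{U})\otimes g(\mathcal{V})$ directly, by unwinding the definitions of preimage, tensor product, and image ultrafilter into a single chain of equivalences, and then deduce the statement about tensor pairs from it exactly as above.)
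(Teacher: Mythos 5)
Your proof is correct and follows essentially the same route as the paper: the paper verifies condition $(6)$ of Theorem \ref{alltensorpairs} for $({}^*\!f(\alpha),{}^*\!g(\beta))$ by pulling a set back along $(f,g)$ via transfer and invoking the corresponding property of $(\alpha,\beta)$, which is exactly your argument stated in the contrapositive form $(5)$. Your extra care with the degenerate finite-coordinate cases and your explicit derivation of the ``standard'' reformulation are details the paper leaves implicit, not a different method.
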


\begin{proof}
We use the characterization of tensor pairs as given by (6) of Proposition %
\ref{alltensorpairs}. Notice that ${}^*(f,g)=({}^*\!f,{}^*\!g)$, and so 
\begin{equation*}
({}^*\!f(\alpha),{}^*\!g(\beta))%
\in{}^*\!X\ \Leftrightarrow\
(\alpha,\beta)\in({}^*\!f,{}^*\!g)^{-1}({}^*\!X)\ =\ {}^*[(f,g)^{-1}(X)].
\end{equation*}
By the hypothesis, we can pick $n\in\mathbb{N}$ such that $(n,\beta)\in{}%
^*[(f,g)^{-1}(X)]$, that is $({}^*\!f(n),{}^*\!g(\beta))\in{}^*\!X$, where $%
{}^*\!f(n)=f(n)\in\mathbb{N}$.
\end{proof}

As we already noticed in Remark \ref{manyextensions}, both the tensor
product $\mathcal{U}\otimes\mathcal{U}$ and its image $\sigma(\mathcal{U}%
\otimes\mathcal{U})$ under the map $\sigma(n,m)=(m,n)$ extend the Cartesian
product $\mathcal{U}\times\mathcal{U}$. However, there is another canonical
ultrafilter that extend the product of an ultrafilter $\mathcal{U}$ by
itself, namely the \emph{diagonal ultrafilter} determined by $\mathcal{U}$: 
\begin{equation*}
\Delta_\mathcal{U}\ =\ \{X\subseteq \mathbb{N}\times\mathbb{N}\mid\{n\mid
(n,n)\in X\}\in\mathcal{U}\}.
\end{equation*}
Clearly $\Delta_\mathcal{U}\cong\mathcal{U}$. Notice that the diagonal $%
\Delta=\{(n,n)\mid n\in\mathbb{N}\}\in\Delta_\mathcal{U}$, but $\Delta\notin%
\mathcal{U}\otimes\mathcal{U}$ and $\Delta\notin \sigma(\mathcal{U}\otimes%
\mathcal{U})$ whenever $\mathcal{U}$ is non-principal. Notice also that if $%
\mathcal{U}=\mathfrak{U}_\alpha$, then $\Delta_\mathcal{U}=\mathfrak{U}%
_{(\alpha,\alpha)}$.

\begin{proposition}
Let $\mathcal{U}$ be non-principal, and let $\mathcal{W}$ be an ultrafilter
that extends the product filter $\mathcal{U}\times\mathcal{U}$. Then $%
\mathcal{W}\le_{RK}\mathcal{U}$ if and only if $\mathcal{W}=\Delta_\mathcal{U%
}$.
\end{proposition}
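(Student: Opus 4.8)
The plan is to treat the two implications separately, the backward one being immediate and the forward one reducing to the rigidity Theorem \ref{u-identity}.

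For the easy direction, $\mathcal{W}=\Delta_\mathcal{U}\Rightarrow\mathcal{W}\le_{RK}\mathcal{U}$, I would simply exhibit a reducing function: let $d:\mathbb{N}\to\mathbb{N}\times\mathbb{N}$ be the diagonal map $d(n)=(n,n)$. For every $X\subseteq\mathbb{N}\times\mathbb{N}$ one has $d^{-1}(X)=\{n\mid(n,n)\in X\}$, so $X\in d(\mathcal{U})\Leftrightarrow\{n\mid(n,n)\in X\}\in\mathcal{U}\Leftrightarrow X\in\Delta_\mathcal{U}$; that is, $d(\mathcal{U})=\Delta_\mathcal{U}$, whence $\Delta_\mathcal{U}\le_{RK}\mathcal{U}$. (The hypothesis $\mathcal{W}\supseteq\mathcal{U}\times\mathcal{U}$ is not needed here, and it is automatically satisfied since $\Delta_\mathcal{U}\supseteq\mathcal{U}\times\mathcal{U}$.)

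For the converse, fix $\alpha\in{}^*\mathbb{N}$ with $\mathfrak{U}_\alpha=\mathcal{U}$ and, using $\mathfrak{c}^+$-saturation, pick $(\beta,\gamma)\in{}^*\mathbb{N}\times{}^*\mathbb{N}$ with $\mathfrak{U}_{(\beta,\gamma)}=\mathcal{W}$. Since $\mathcal{W}=\mathfrak{U}_{(\beta,\gamma)}\supseteq\mathfrak{U}_\alpha\times\mathfrak{U}_\alpha$, Proposition \ref{productofultrafilters} gives $\beta\ueq\alpha$ and $\gamma\ueq\alpha$. Now $\mathcal{W}\le_{RK}\mathcal{U}$ means there is a function $f=(f_1,f_2):\mathbb{N}\to\mathbb{N}\times\mathbb{N}$ with $f(\mathcal{U})=\mathcal{W}$; since $f(\mathfrak{U}_\alpha)=\mathfrak{U}_{{}^*\!f(\alpha)}$ (the proof of the image-ultrafilter proposition works verbatim for this codomain) and ${}^*\!f(\alpha)=({}^*\!f_1(\alpha),{}^*\!f_2(\alpha))$, we obtain $({}^*\!f_1(\alpha),{}^*\!f_2(\alpha))\ueq(\beta,\gamma)$. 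Applying the last assertion of Proposition \ref{productofultrafilters} to this $u$-equivalence of pairs yields ${}^*\!f_1(\alpha)\ueq\beta\ueq\alpha$ and ${}^*\!f_2(\alpha)\ueq\gamma\ueq\alpha$, and then Theorem \ref{u-identity} forces ${}^*\!f_1(\alpha)=\alpha$ and ${}^*\!f_2(\alpha)=\alpha$. Hence ${}^*\!f(\alpha)=(\alpha,\alpha)$ and therefore $\mathcal{W}=\mathfrak{U}_{{}^*\!f(\alpha)}=\mathfrak{U}_{(\alpha,\alpha)}=\Delta_\mathcal{U}$.

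I do not expect a genuine obstacle. The only point requiring attention is that $\mathcal{W}\le_{RK}\mathcal{U}$ by itself does not determine $\mathcal{W}$: it is the combination with $\mathcal{W}\supseteq\mathcal{U}\times\mathcal{U}$ (which, via Proposition \ref{productofultrafilters}, pins both coordinates of any reducing function to fix $\mathfrak{U}_\alpha$ up to $u$-equivalence) together with the rigidity of Theorem \ref{u-identity} that collapses $f$ to the diagonal on a set in $\mathcal{U}$. If a purely ``standard'' write-up were preferred, the same argument can be run using the theorem ``$f(\mathcal{U})=\mathcal{U}\Rightarrow\{n\mid f(n)=n\}\in\mathcal{U}$'' applied to $f_1$ and $f_2$ (after noting that $\mathcal{W}\supseteq\mathcal{U}\times\mathcal{U}$ forces $\pi_i(\mathcal{W})=\mathcal{U}$, hence $f_i(\mathcal{U})=\mathcal{U}$): intersecting the agreement sets of $f_1$ and $f_2$ with the identity produces $D\in\mathcal{U}$ on which $f(n)=(n,n)$, and then $X\in f(\mathcal{U})\Leftrightarrow f^{-1}(X)\cap D\in\mathcal{U}\Leftrightarrow\{n\mid(n,n)\in X\}\in\mathcal{U}\Leftrightarrow X\in\Delta_\mathcal{U}$.
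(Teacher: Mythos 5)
Your proof is correct and follows essentially the same route as the paper: the diagonal map for the easy direction, then Proposition \ref{productofultrafilters} to pin both coordinates of the generating pair (and of the reducing function's image) to the $u$-monad of $\alpha$, and Theorem \ref{u-identity} to force ${}^*\!f_1(\alpha)={}^*\!f_2(\alpha)=\alpha$. Your explicit remark that the image-ultrafilter identity extends verbatim to functions $\mathbb{N}\to\mathbb{N}\times\mathbb{N}$ addresses a point the paper leaves implicit.
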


\begin{proof}
Let $\mathcal{U}=\mathfrak{U}_\alpha$. If $h$ is the function such that $%
h(n)=(n,n)$ for all $n$, then $h(\mathcal{U})=\mathfrak{U}_{{}^*\!h(\alpha)}=%
\mathfrak{U}_{(\alpha,\alpha)}=\Delta_\mathcal{U}$, and so $\Delta_\mathcal{U%
}\le_{RK}\mathcal{U}$. Conversely, let $\mathcal{W}\supset\mathcal{U}\times%
\mathcal{U}$ and assume that $F(\mathfrak{U}_\alpha)=\mathcal{W}$ for a
suitable function $F:\mathbb{N}\to\mathbb{N}\times\mathbb{N}$, say $%
F(n)=(f(n),g(n))$. By Proposition \ref{productofultrafilters}, we can pick $%
\beta{\,{\sim}_{{}_{\!\!\!\!\! u}}\;}\gamma{\,{\sim}_{{}_{\!\!\!\!\! u}}\;}%
\alpha$ such that $\mathcal{W}=\mathfrak{U}_{(\beta,\gamma)}$. Then 
\begin{equation*}
\mathfrak{U}_{(\beta,\gamma)}=F(\mathfrak{U}_\alpha)=\mathfrak{U}_{{}^{*}\!F
(\alpha)}=\mathfrak{U}_{({}^*\!f(\alpha),{}^*\!g(\alpha))}.
\end{equation*}
Thus, ${}^*\!f(\alpha){\,{\sim}_{{}_{\!\!\!\!\! u}}\;}\beta{\,{\sim}%
_{{}_{\!\!\!\!\! u}}\;}\alpha$ and ${}^*\!g(\alpha){\,{\sim}_{{}_{\!\!\!\!\!
u}}\;}\gamma{\,{\sim}_{{}_{\!\!\!\!\! u}}\;}\alpha$ and so, by Theorem \ref%
{u-identity}, ${}^*\!f(\alpha)={}^*\!g(\alpha)=\alpha$. We conclude that $%
\mathcal{W}=\mathfrak{U}_{(\alpha,\alpha)}=\Delta_\mathcal{U}$.
\end{proof}

\begin{corollary}
If $\mathcal{U}$ is a non-principal ultrafilter, then $\mathcal{U}\otimes%
\mathcal{U}\not\le_{RK}\mathcal{U}$ (and hence $\mathcal{U}\otimes\mathcal{U}%
\not\cong\mathcal{U}$).
\end{corollary}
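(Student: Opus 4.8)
The plan is to deduce this immediately from the Proposition that precedes it, using the two elementary observations already recorded in the text: that $\mathcal{U}\otimes\mathcal{U}$ extends the product filter $\mathcal{U}\times\mathcal{U}$, and that the diagonal $\Delta=\{(n,n)\mid n\in\mathbb{N}\}$ lies in $\Delta_\mathcal{U}$ but not in $\mathcal{U}\otimes\mathcal{U}$ when $\mathcal{U}$ is non-principal.

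First I would argue by contradiction, assuming $\mathcal{U}\otimes\mathcal{U}\le_{RK}\mathcal{U}$. Since $\mathcal{W}=\mathcal{U}\otimes\mathcal{U}$ is an ultrafilter on $\mathbb{N}\times\mathbb{N}$ that extends $\mathcal{U}\times\mathcal{U}$, the preceding Proposition applies and yields $\mathcal{U}\otimes\mathcal{U}=\Delta_\mathcal{U}$. Then I would invoke the fact, noted just before that Proposition, that $\Delta\in\Delta_\mathcal{U}$ while $\Delta\notin\mathcal{U}\otimes\mathcal{U}$ for non-principal $\mathcal{U}$; this gives $\mathcal{U}\otimes\mathcal{U}\neq\Delta_\mathcal{U}$, the desired contradiction. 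Hence $\mathcal{U}\otimes\mathcal{U}\not\le_{RK}\mathcal{U}$.

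For the parenthetical consequence, I would simply recall that isomorphism $\mathcal{U}\otimes\mathcal{U}\cong\mathcal{U}$ would in particular give $\mathcal{U}\otimes\mathcal{U}\le_{RK}\mathcal{U}$ (via the isomorphism map itself), which has just been ruled out.

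There is no real obstacle here: everything needed has already been established, so the only care required is to check that the hypotheses of the cited Proposition (namely $\mathcal{U}$ non-principal and $\mathcal{W}\supseteq\mathcal{U}\times\mathcal{U}$) are met, which is immediate, and to correctly cite that $\Delta\notin\mathcal{U}\otimes\mathcal{U}$ for non-principal $\mathcal{U}$.
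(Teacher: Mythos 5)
Your proof is correct and is precisely the deduction the paper intends: the Corollary is stated without proof immediately after the Proposition, and the argument is to apply that Proposition to $\mathcal{W}=\mathcal{U}\otimes\mathcal{U}\supseteq\mathcal{U}\times\mathcal{U}$ and then use the already-noted fact that $\Delta\in\Delta_\mathcal{U}$ but $\Delta\notin\mathcal{U}\otimes\mathcal{U}$ for non-principal $\mathcal{U}$. Your handling of the parenthetical claim (isomorphism implies $\le_{RK}$) is also exactly right.
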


We close this section by showing that tensor pairs are found in abundance.

\begin{theorem}
\label{existencetensorpairs} Let $\alpha,\beta\in{}^*\mathbb{N}$ be
infinite. Then:

\begin{enumerate}
\item The set $R_{\alpha,\beta}=\{\beta^{\prime }\mid\beta^{\prime }{\,{\sim}%
_{{}_{\!\!\!\!\! u}}\;}\beta\ \&\ (\alpha,\beta^{\prime })\ \text{tensor pair%
}\}$ contains $|{}^*\mathbb{N}|$-many elements and it is unbounded in ${}^*%
\mathbb{N}$\,;

\item The set $L_{\alpha,\beta}=\{\alpha^{\prime }\mid\alpha^{\prime }{\,{%
\sim}_{{}_{\!\!\!\!\! u}}\;}\alpha\ \&\ (\alpha^{\prime },\beta)\ \text{%
tensor pair}\}$ is unbounded leftward in ${}^*\mathbb{N}\setminus\mathbb{N}$%
, and hence it contains at least $\mathfrak{c}^+$-many elements.
\end{enumerate}
\end{theorem}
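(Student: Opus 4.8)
The plan is to reduce both statements to a single clean condition and then argue by $\mathfrak{c}^+$-\emph{saturation}, following the template of Theorem~\ref{monads}. The key observation is that, for infinite $\alpha,\beta$, a pair $(\alpha,\beta')$ is a tensor pair with $\beta'\ueq\beta$ if and only if $\mathfrak{U}_{(\alpha,\beta')}=\mathfrak{U}_\alpha\otimes\mathfrak{U}_\beta$ (and symmetrically $(\alpha',\beta)$ is a tensor pair with $\alpha'\ueq\alpha$ iff $\mathfrak{U}_{(\alpha',\beta)}=\mathfrak{U}_\alpha\otimes\mathfrak{U}_\beta$). Indeed, if $(\alpha,\beta')$ is a tensor pair then $\mathfrak{U}_{(\alpha,\beta')}=\mathfrak{U}_\alpha\otimes\mathfrak{U}_{\beta'}$ by condition~(1) of Theorem~\ref{alltensorpairs}, and $\beta'\ueq\beta$ gives $\mathfrak{U}_{\beta'}=\mathfrak{U}_\beta$; conversely $\mathfrak{U}_{(\alpha,\beta')}=\mathfrak{U}_\alpha\otimes\mathfrak{U}_\beta$ contains $\mathfrak{U}_\alpha\times\mathfrak{U}_\beta$, so $\beta'\ueq\beta$ by Proposition~\ref{productofultrafilters}, and then $\mathfrak{U}_\alpha\otimes\mathfrak{U}_\beta=\mathfrak{U}_\alpha\otimes\mathfrak{U}_{\beta'}$, which is condition~(1). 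Since an inclusion between ultrafilters is an equality, this yields $R_{\alpha,\beta}=\bigcap\{\,{}^*\!Z_\alpha\mid Z\in\mathfrak{U}_\alpha\otimes\mathfrak{U}_\beta\,\}$, where ${}^*\!Z_\alpha=\{\zeta\in{}^*\mathbb{N}\mid(\alpha,\zeta)\in{}^*\!Z\}$ is the internal vertical fiber; this is an intersection of at most $\mathfrak{c}$ internal sets. For $L_{\alpha,\beta}$ I will instead work directly with characterization~(7) of Theorem~\ref{alltensorpairs}.

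For part~(1) I would imitate the proof of Theorem~\ref{monads}(1). Fix $\xi\in{}^*\mathbb{N}$. Since $\beta$ is infinite, every member of $\mathfrak{U}_\beta$ is an infinite subset of $\mathbb{N}$, so for $Z\in\mathfrak{U}_\alpha\otimes\mathfrak{U}_\beta$ one has $\{n\mid Z_n\ \text{infinite}\}\supseteq\{n\mid Z_n\in\mathfrak{U}_\beta\}\in\mathfrak{U}_\alpha$; hence $\alpha\in{}^*\{n\mid Z_n\ \text{infinite}\}$ and, by \emph{transfer}, the fiber ${}^*\!Z_\alpha$ is a hyperinfinite internal subset of ${}^*\mathbb{N}$. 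Transferring the statement ``for every $n$ with $Z_n$ infinite and every $k$ there is a $1$-$1$ map $\mathbb{N}\to Z_n\cap(k,+\infty)$'' produces, for each such $Z$, a non-empty internal set $\Gamma(Z)$ of internal $1$-$1$ maps ${}^*\mathbb{N}\to{}^*\!Z_\alpha\cap(\xi,+\infty)$. Because fibers commute with finite intersections, ${}^*\!(Z\cap W)_\alpha={}^*\!Z_\alpha\cap{}^*\!W_\alpha$ (by transfer), so $\Gamma(Z_1)\cap\dots\cap\Gamma(Z_n)\supseteq\Gamma(Z_1\cap\dots\cap Z_n)$ with $Z_1\cap\dots\cap Z_n\in\mathfrak{U}_\alpha\otimes\mathfrak{U}_\beta$; thus $\{\Gamma(Z)\}$ has the FIP, and by $\mathfrak{c}^+$-\emph{saturation} I may pick $\varphi\in\bigcap_Z\Gamma(Z)$. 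Then $\text{range}(\varphi)$ is an internal, hyperinfinite (hence unbounded in ${}^*\mathbb{N}$) subset of $R_{\alpha,\beta}\cap(\xi,+\infty)$ of cardinality $|{}^*\mathbb{N}|$. Letting $\xi$ range over ${}^*\mathbb{N}$ gives~(1).

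For part~(2) I would use characterization~(7): $(\alpha',\beta)$ is a tensor pair iff $\alpha'<{}^*\!f(\beta)$ for every $f:\mathbb{N}\to\mathbb{N}$ with ${}^*\!f(\beta)\notin\mathbb{N}$. Hence $L_{\alpha,\beta}=u(\alpha)\cap\bigcap\{\,[0,{}^*\!f(\beta))\mid {}^*\!f(\beta)\notin\mathbb{N}\,\}$, again an intersection of at most $\mathfrak{c}$ sets. Given an infinite $\xi$, adjoin $[0,\xi)$: a finite subfamily involves finitely many $X_i\in\mathfrak{U}_\alpha$ and $f_j$, and $\bigcap_i{}^*\!X_i\cap[0,\nu)$ with $\nu=\min(\xi,{}^*\!f_1(\beta),\dots,{}^*\!f_k(\beta))$ infinite contains every standard element of the infinite set $\bigcap_i X_i$, so the FIP holds. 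By $\mathfrak{c}^+$-\emph{saturation} there is $\alpha'\in L_{\alpha,\beta}$ with $\alpha'<\xi$, and $\alpha'$ is infinite since $\alpha'\ueq\alpha$ with $\alpha$ infinite; so $L_{\alpha,\beta}$ is unbounded leftward in ${}^*\mathbb{N}\setminus\mathbb{N}$. Finally, exactly as in Theorem~\ref{monads}(2), the family $\mathcal{G}=\{(k,\zeta)\mid k\in\mathbb{N},\ \zeta\in L_{\alpha,\beta}\cap[0,\xi)\}$ has the FIP but empty intersection, since no infinite number lies below all of the downward-unbounded set $L_{\alpha,\beta}\cap[0,\xi)$; hence $|\mathcal{G}|\ge\mathfrak{c}^+$ by saturation, and therefore $|L_{\alpha,\beta}|\ge|L_{\alpha,\beta}\cap[0,\xi)|\ge\mathfrak{c}^+$.

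The main obstacle is concentrated in part~(1): one must check that the vertical fibers ${}^*\!Z_\alpha$ are genuinely hyperinfinite — which is precisely where the hypothesis that $\beta$ is infinite is used, through the fact that a non-principal $\mathfrak{U}_\beta$ consists of infinite sets — and then that the internal sets $\Gamma(Z)$ of fiber-valued $1$-$1$ maps form a FIP family, which rests on the transferred identity ${}^*\!(Z\cap W)_\alpha={}^*\!Z_\alpha\cap{}^*\!W_\alpha$ and on $Z_1\cap\dots\cap Z_n$ still lying in the tensor product. Part~(2) is essentially routine once characterization~(7) of tensor pairs is used in place of the definition.
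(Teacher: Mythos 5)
Your proof is correct, and it departs from the paper's mainly in part (1). The paper proves (1) by working directly with characterization (7) of Theorem \ref{alltensorpairs}: for each pair $(B,f)$ with $\beta\in{}^*B$ and ${}^*\!f(\beta)$ infinite it forms the internal set $\Gamma_{B,f}$ of internal $1$-$1$ maps $\sigma:{}^*\mathbb{N}\to{}^*B$ with ${}^*\!f(\sigma(\nu))>\alpha$ for all $\nu$ (non-empty by transfer, since $f(B)$ is infinite), checks the FIP via $B_1\cap\dots\cap B_k$ and $\min\{f_1,\dots,f_k\}$, and extracts by saturation one internal $1$-$1$ map whose range consists of suitable $\beta'$. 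You instead first identify $R_{\alpha,\beta}$ with $\bigcap\{{}^*\!Z_\alpha\mid Z\in\mathfrak{U}_\alpha\otimes\mathfrak{U}_\beta\}$ --- in effect the $\alpha$-fiber of the monad of the tensor product, justified correctly via characterization (1)/(2) and Proposition \ref{productofultrafilters} --- and then run the proof of Theorem \ref{monads}(1) verbatim on that family, the only new check being that each fiber ${}^*\!Z_\alpha$ is hyperinfinite, which is exactly where the hypothesis that $\beta$ is infinite enters. Your reduction buys a cleaner conceptual picture ($R_{\alpha,\beta}$ is literally a monad-like intersection of $\mathfrak{c}$-many internal sets, so the earlier theorem applies with no new ideas) and hands you unboundedness directly, since the range can be placed above any prescribed $\xi$; the paper gets unboundedness instead from the fact that an internal hyperinfinite subset of ${}^*\mathbb{N}$ is unbounded. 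For part (2) the two arguments are essentially the same, both resting on characterization (7): the paper is slightly more economical, fixing by the coinitiality argument a single infinite $\xi_0$ below every infinite ${}^*\!f(\beta)$ and then observing $L_{\alpha,\beta}\supseteq u(\alpha)\cap[0,\xi_0)$ so that Theorem \ref{monads}(2) finishes, whereas you rederive the same facts inline with an explicit FIP family and a second copy of the $\mathcal{G}$-argument. Both routes are sound.
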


\begin{proof}
$(1)$. We prove the thesis by showing that there exists an internal
hyperinfinite set of elements $\beta^{\prime }{\,{\sim}_{{}_{\!\!\!\!\! u}}\;%
}\beta$ such that $(\alpha,\beta^{\prime })$ is a tensor pair. Let $%
B\subseteq\mathbb{N}$ and $f:\mathbb{N}\to\mathbb{N}$ be such that $\beta\in{%
}^*B$ and ${}^*\!f(\beta)\notin\mathbb{N}$. Then ${}^*\!f(\beta)\in{}%
^*(f(B)) $ implies that the image set $f(B)$ is infinite, and so the
following property holds: 
\begin{equation*}
\forall y\in\mathbb{N}\ \exists\,\sigma:\mathbb{N}\to B\ \text{1-1}\ \text{%
s.t.}\ \forall x\in\mathbb{N}\ f(\sigma(x))>y.
\end{equation*}
By \emph{transfer}, it follows that the following internal set is non-empty: 
\begin{equation*}
\Gamma_{B,f}\ =\ \{\sigma:{}^*\mathbb{N}\to{}^*B\ \text{internal}\mid
\sigma\ \text{1-1}\ \&\ {}^*\!f({}^*\sigma(\nu))>\alpha\ \text{for all }\nu%
\in{}^*\mathbb{N}\}.
\end{equation*}
Notice that $\Gamma_{B_1,f_1}\cap\ldots\cap\Gamma_{B_k,f_k}=\Gamma_{B,f}$
where $B=B_1\cap\ldots\cap B_k$ and $f=\min\{f_1,\ldots,f_k\}$. Then, the
family $\{\Gamma_{B,f}\mid \beta\in{}^*B\,\&\,{}^*\!f(\beta)\notin\mathbb{N}%
\}$ has the finite intersection property and so, by $\mathfrak{c}^+$%
-enlarging, we can pick an element 
\begin{equation*}
\tau\ \in\ \bigcap\left\{{}^*\Gamma_{B,f}\mid \beta\in{}^*B\ \&\
{}^*\!f(\beta)\notin\mathbb{N}\right\}.
\end{equation*}
Notice that $\tau(\nu){\,{\sim}_{{}_{\!\!\!\!\! u}}\;}\beta$ for all $\nu\in{%
}^*\mathbb{N}$, and therefore $\text{range}(\tau)$ is an hyperinfinite set
of elements $\beta^{\prime }{\,{\sim}_{{}_{\!\!\!\!\! u}}\;}\beta$ such that 
${}^*\!f(\beta^{\prime })>\alpha$ whenever ${}^*\!f(\beta)\notin\mathbb{N}$.
Finally, since $\beta^{\prime }{\,{\sim}_{{}_{\!\!\!\!\! u}}\;}\beta$, one
has that ${}^*\!f(\beta)\notin\mathbb{N}\Leftrightarrow{}^*\!f(\beta^{\prime
})\notin\mathbb{N}$; so, condition $(7)$ of Theorem \ref{alltensorpairs}
applies, and all pairs $(\alpha,\beta^{\prime })$ are tensor pairs.

$(2)$. We recall that by $\mathfrak{c}^+$-\emph{saturation}, the
coinitiality of ${}^*\mathbb{N}\setminus\mathbb{N}$ is greater than $%
\mathfrak{c}$, and so we can pick an infinite $\xi$ such that $%
\xi<{}^*\!f(\beta)$ for all $f$ with ${}^*\!f(\beta)\notin\mathbb{N}$. Then,
by $(7)$ of Theorem \ref{alltensorpairs}, $(\alpha^{\prime },\beta)$ is a
tensor pair for all $\alpha^{\prime }<\xi$, and the thesis follows because
the set $u(\alpha)\cap[0,\xi)$ is unbounded leftward in ${}^*\mathbb{N}%
\setminus\mathbb{N}$, by Theorem \ref{monads}.
\end{proof}

\section{Hyper-shifts}

The following notion was introduced by M. Beiglb\"ock in \cite{11be}:

\begin{definition}
Let $A\subseteq\mathbb{N}$ and let $\mathcal{U}$ be an ultrafilter on $%
\mathbb{N}$. The \emph{ultrafilter-shift} of $A$ by $\mathcal{U}$ is defined
as the set 
\begin{equation*}
A-\mathcal{U}\ =\ \{n\in\mathbb{N}\mid A-n\in\mathcal{U}\}.
\end{equation*}
\end{definition}

We now introduce a class of subsets of $\mathbb{N}$, which are found as
segments of hyper-extensions, and that precisely correspond to
ultrafilter-shifts.

\begin{definition}
The \emph{hyper-shift}%
\index{hyper-shift} of a set $A\subseteq\mathbb{N}$ by a number $\gamma%
\in{}^*\mathbb{N}$ is the following set: 
\begin{equation*}
A_\gamma\ =\ ({}^*\!A-\gamma)\cap\mathbb{N}\ =\ \{n\in\mathbb{N}\mid\gamma+n%
\in{}^*\!A\}.
\end{equation*}
\end{definition}

It is readily seen that hyper-shifts are coherent with respect to finite
shifts, intersections, unions, and complements.

\begin{proposition}
For every $A,B\subseteq\mathbb{N}$, for every $n\in\mathbb{N}$, and for
every $\gamma\in{}^*\mathbb{N}$, the following equalities hold:

\begin{enumerate}
\item $(A-n)_\gamma=A_\gamma-n$\,;

\item $(A\cap B)_\gamma=A_\gamma\cap B_\gamma$\,;

\item $(A\cup B)_\gamma=A_\gamma\cup B_\gamma$\,;

\item $(A^c)_\gamma=(A_\gamma)^c$.
\end{enumerate}
\end{proposition}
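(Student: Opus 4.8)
The plan is to verify each of the four equalities directly from the definition $A_\gamma=\{n\in\mathbb{N}\mid\gamma+n\in{}^*\!A\}$, using only the elementary properties of the star-map (transfer applied to the trivial set-theoretic identities for shifts, intersections, unions, and complements of subsets of $\mathbb{N}$). Each statement will follow by a short chain of ``if and only if'' equivalences, with no saturation needed.

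First, for (1): a natural number $n$ lies in $(A-n)_\gamma$ precisely when $\gamma+n\in{}^*(A-n)$; since ${}^*(A-n)={}^*\!A-n$ (the star-map commutes with translation by a fixed integer, by transfer of the identity $A-n=\{x\mid x+n\in A\}$), this says $\gamma+n+n\in{}^*\!A$ wait—I must be careful with the clash of the bound variable $n$ and the fixed shift; I will rename and write $(A-k)_\gamma=A_\gamma-k$, so that $n\in(A-k)_\gamma\Leftrightarrow \gamma+n\in{}^*(A-k)={}^*\!A-k\Leftrightarrow \gamma+n+k\in{}^*\!A\Leftrightarrow n+k\in A_\gamma\Leftrightarrow n\in A_\gamma-k$. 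For (2), $n\in(A\cap B)_\gamma\Leftrightarrow\gamma+n\in{}^*(A\cap B)={}^*\!A\cap{}^*B\Leftrightarrow\gamma+n\in{}^*\!A$ and $\gamma+n\in{}^*B\Leftrightarrow n\in A_\gamma\cap B_\gamma$. Items (3) and (4) are identical in form, using ${}^*(A\cup B)={}^*\!A\cup{}^*B$ and ${}^*(A^c)=({}^*\!A)^c$ (the latter taken relative to ${}^*\mathbb{N}$), together with the observation that intersecting with $\mathbb{N}$ in the definition of the hyper-shift already restricts $n$ to range over $\mathbb{N}$, so complementation behaves correctly.

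The only genuine subtlety—and the one place worth a sentence in the writeup—is the bound-variable collision in (1) and the fact that ``$A_\gamma-k$'' on the right-hand side means $\{n\in\mathbb{N}\mid n+k\in A_\gamma\}$, i.e.\ a shift of a subset of $\mathbb{N}$, whereas ${}^*\!A-\gamma$ in the definition of $A_\gamma$ lives in ${}^*\mathbb{N}$; once one writes everything out in terms of membership of $\gamma+n+k$ in ${}^*\!A$ these match up, but the notation invites confusion. There is no real obstacle: the whole proposition is a routine unwinding of definitions.

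I would present the proof compactly as follows.

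\begin{proof}
All four equalities follow by unwinding the definition of hyper-shift and using that the star-map commutes with the relevant set operations.

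$(1)$ For $n\in\mathbb{N}$, using ${}^*(A-k)={}^*\!A-k$:
\begin{equation*}
n\in(A-k)_\gamma\ \Leftrightarrow\ \gamma+n\in{}^*(A-k)\ \Leftrightarrow\ \gamma+n+k\in{}^*\!A\ \Leftrightarrow\ n+k\in A_\gamma\ \Leftrightarrow\ n\in A_\gamma-k.
\end{equation*}

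$(2)$ Since ${}^*(A\cap B)={}^*\!A\cap{}^*B$:
\begin{equation*}
n\in(A\cap B)_\gamma\ \Leftrightarrow\ \gamma+n\in{}^*\!A\cap{}^*B\ \Leftrightarrow\ n\in A_\gamma\ \text{and}\ n\in B_\gamma.
\end{equation*}

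$(3)$ Since ${}^*(A\cup B)={}^*\!A\cup{}^*B$:
\begin{equation*}
n\in(A\cup B)_\gamma\ \Leftrightarrow\ \gamma+n\in{}^*\!A\cup{}^*B\ \Leftrightarrow\ n\in A_\gamma\ \text{or}\ n\in B_\gamma.
\end{equation*}

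$(4)$ Since ${}^*(A^c)={}^*\mathbb{N}\setminus{}^*\!A$ and $\gamma+n\in{}^*\mathbb{N}$ for $n\in\mathbb{N}$:
\begin{equation*}
n\in(A^c)_\gamma\ \Leftrightarrow\ \gamma+n\notin{}^*\!A\ \Leftrightarrow\ n\notin A_\gamma\ \Leftrightarrow\ n\in(A_\gamma)^c.
\end{equation*}
\end{proof}
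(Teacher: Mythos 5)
Your proof is correct and is exactly the routine unwinding the paper has in mind: the paper states this proposition without proof (``it is readily seen''), and where the identical computation does appear later --- part $(2)$ of Proposition \ref{hyper-shifts}, proving $(A-n)_\beta=A_\beta-n$ --- it is the same chain of equivalences you give for item $(1)$. Your attention to the bound-variable clash and to the fact that complementation in $(4)$ is taken relative to $\mathbb{N}$ on one side and ${}^*\mathbb{N}$ on the other is appropriate but introduces no divergence from the intended argument.
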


\begin{proposition}
\label{ultrashifts} For every $A\subseteq\mathbb{N}$ and for every $\gamma%
\in{}^*\mathbb{N}$, 
\begin{equation*}
A_\gamma\ =\ A-\mathfrak{U}_\gamma.
\end{equation*}
\end{proposition}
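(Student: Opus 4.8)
The plan is to unwind both sides against the definition of membership in a hyper-extension and the definition of the generated ultrafilter $\mathfrak{U}_\gamma$. This is a short ``chain of equivalences'' argument, of exactly the same flavour as the proofs of Proposition~\ref{ueq1} and the proposition computing $f(\mathfrak{U}_\alpha)=\mathfrak{U}_{{}^*\!f(\alpha)}$.

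First I would fix $A\subseteq\mathbb{N}$ and $\gamma\in{}^*\mathbb{N}$, and take an arbitrary $n\in\mathbb{N}$. By the definition of hyper-shift, $n\in A_\gamma$ if and only if $\gamma+n\in{}^*\!A$. Now the key move: since $n$ is a \emph{finite} natural number, adding $n$ is the hyper-extension of the map $k\mapsto k+n$ on $\mathbb{N}$, so $\gamma+n\in{}^*\!A$ is equivalent to $\gamma\in{}^*\!\{k\in\mathbb{N}\mid k+n\in A\}={}^*(A-n)$. By the definition of $\mathfrak{U}_\gamma$, this says $A-n\in\mathfrak{U}_\gamma$. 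Finally, by the definition of the ultrafilter-shift, $A-n\in\mathfrak{U}_\gamma$ is exactly the condition $n\in A-\mathfrak{U}_\gamma$. Chaining these equivalences gives $n\in A_\gamma\Leftrightarrow n\in A-\mathfrak{U}_\gamma$ for every $n\in\mathbb{N}$, hence $A_\gamma=A-\mathfrak{U}_\gamma$.

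There is no real obstacle here; the only point that deserves a word of care is the identification $\gamma+n\in{}^*\!A\Leftrightarrow\gamma\in{}^*(A-n)$, which relies on $n$ being standard (so that $A-n$ makes sense as an ordinary subset of $\mathbb{N}$ whose hyper-extension is computed by transfer). One could also phrase this using part~(1) of the preceding proposition, $(A-n)_\gamma=A_\gamma-n$, together with the trivial observation $0\in A_\gamma-n\Leftrightarrow n\in A_\gamma$ and $0\in(A-n)_\gamma\Leftrightarrow \gamma\in{}^*(A-n)\Leftrightarrow A-n\in\mathfrak{U}_\gamma$, but the direct chain of equivalences is cleaner and self-contained. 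I would present it as a two-line displayed chain of ``$\Leftrightarrow$''s, mirroring the style already used in the paper.
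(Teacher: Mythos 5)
Your proof is correct and is essentially identical to the paper's: the paper's argument is exactly the chain $n\in A_\gamma\Leftrightarrow \gamma+n\in{}^*\!A\Leftrightarrow \gamma\in{}^*\!A-n={}^*(A-n)\Leftrightarrow A-n\in\mathfrak{U}_\gamma\Leftrightarrow n\in A-\mathfrak{U}_\gamma$. Your extra remark that the identification ${}^*\!A-n={}^*(A-n)$ rests on $n$ being standard is a reasonable clarification of a step the paper leaves implicit.
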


\begin{proof}
The following chain of equivalences is directly obtained from the
definitions: $n\in A_\gamma\Leftrightarrow \gamma+n\in{}^*\!A\Leftrightarrow
\gamma\in {}^*\!A-n={}^*(A-n)\Leftrightarrow A-n\in\mathfrak{U}%
_\gamma\Leftrightarrow n\in A-\mathfrak{U}_\gamma$.
\end{proof}

Ultrafilter-shifts (and their nonstandard counterparts, the hyper-shifts)
have a precise combinatorial meaning corresponding to a notion of
embeddability.

\begin{definition}
Let $A,B\subseteq\mathbb{N}$. We say that $A$ is \emph{exactly embedded} in $%
B$, and write $A\le_e B$, if for every finite interval $I$ there exists $x$ such
that $x+(A\cap I)=B\cap(x+I)$.
\end{definition}

A similar relation between sets of natural numbers, named ``finite
embeddability", has been considered in \cite[\S 4]{11dn} (see also \cite%
{11bd}, where that notion was extended to ultrafilters). The difference is
that ``$A$ finitely embedded in $B$" only requires the inclusion $x+(A\cap
I)\subseteq B\cap(x+I)$.

With respect to finite configurations, $A\le_e B$ tells that $B$ is at least
as combinatorially rich as $A$. For example, if $A$ contains arbitrarily
long arithmetic progressions and $A\le_e B$, then also $B$ contains
arbitrarily long arithmetic progressions.

\begin{proposition}
For $A,B\subseteq\mathbb{N}$, the following are equivalent:

\begin{enumerate}
\item $A\le_e B$\.;

\item $A$ is an ultrafilter-shift of $B$\,;

\item $A=B_\gamma$ for some $\gamma\in{}^*\mathbb{N}$.
\end{enumerate}
\end{proposition}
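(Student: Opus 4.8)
The plan is to prove the equivalences by establishing the cycle $(3)\Rightarrow(2)\Rightarrow(1)\Rightarrow(3)$, using the two previous propositions to collapse most of the work. The implication $(3)\Rightarrow(2)$ is immediate from Proposition~\ref{ultrashifts}, which states precisely that $B_\gamma=B-\mathfrak{U}_\gamma$; so if $A=B_\gamma$ for some $\gamma\in{}^*\mathbb{N}$, then $A$ is the ultrafilter-shift of $B$ by the ultrafilter $\mathfrak{U}_\gamma$. Conversely, $(2)\Rightarrow(3)$ is equally easy: if $A=B-\mathcal{U}$ for some ultrafilter $\mathcal{U}$ on $\mathbb{N}$, use $\mathfrak{c}^+$-\emph{saturation} (as recalled in Section~\ref{sec-uequivalence}) to pick $\gamma\in{}^*\mathbb{N}$ with $\mathfrak{U}_\gamma=\mathcal{U}$, and then $A=B-\mathcal{U}=B-\mathfrak{U}_\gamma=B_\gamma$ again by Proposition~\ref{ultrashifts}. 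So the real content is the equivalence of $(1)$ with these.

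First I would prove $(1)\Rightarrow(3)$. Assume $A\le_e B$, so for every finite interval $I$ there exists $x=x_I$ with $x_I+(A\cap I)=B\cap(x_I+I)$. Consider the family of subsets of $\mathbb{N}$ given, for each finite interval $I$, by $\Delta_I=\{x\in\mathbb{N}\mid x+(A\cap I)=B\cap(x+I)\}$; by hypothesis each $\Delta_I$ is non-empty, and the family $\{\Delta_I\mid I\text{ a finite interval}\}$ has the FIP since $\Delta_{I_1}\cap\cdots\cap\Delta_{I_n}\supseteq\Delta_J$ for $J$ any finite interval containing $I_1\cup\cdots\cup I_n$. By $\mathfrak{c}^+$-\emph{saturation} pick $\gamma\in\bigcap_I{}^*\Delta_I$. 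Then for every finite interval $I$, \emph{transfer} of the defining property of $\Delta_I$ gives $\gamma+({}^*\!A\cap{}^*I)={}^*B\cap(\gamma+{}^*I)$; intersecting with the relevant finite portion and letting $I$ exhaust $\mathbb{N}$ shows that for each $n\in\mathbb{N}$ one has $n\in A\Leftrightarrow\gamma+n\in{}^*B$, \emph{i.e.} $A=B_\gamma$.

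For $(3)\Rightarrow(1)$, suppose $A=B_\gamma$, so $\gamma+n\in{}^*B\Leftrightarrow n\in A$ for all $n\in\mathbb{N}$. Fix a finite interval $I$. Then $\gamma$ witnesses, inside ${}^*\mathbb{N}$, the statement ``there exists $\xi$ such that for all $n\in I$: $n\in A\Leftrightarrow \xi+n\in{}^*B$'' --- more precisely, $\gamma\in{}^*\{x\mid x+(A\cap I)=B\cap(x+I)\}$, because membership in the \emph{standard} finite set $A\cap I$ and in the \emph{standard} finite set $B\cap(x+I)$ is decided coordinatewise by the hyper-shift relation. Since this internal set is non-empty (it contains $\gamma$), by \emph{transfer} its standard counterpart $\{x\in\mathbb{N}\mid x+(A\cap I)=B\cap(x+I)\}$ is non-empty, which is exactly the assertion that there exists $x$ with $x+(A\cap I)=B\cap(x+I)$. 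As $I$ was an arbitrary finite interval, $A\le_e B$.

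The main obstacle, and the point deserving the most care, is the bookkeeping in the $(1)\Rightarrow(3)$ direction: one must check that the sets $\Delta_I$ are genuinely internal-friendly --- that is, that ``$x+(A\cap I)=B\cap(x+I)$'' transfers cleanly --- and that a single $\gamma$ in the saturated intersection really produces the hyper-shift equality $A=B_\gamma$ on \emph{all} of $\mathbb{N}$ at once, not merely on each $I$ separately. This is handled by noting that the conditions for different $I$ are compatible (nested intervals give nested constraints on the shift amount), so the FIP argument applies and the resulting $\gamma$ simultaneously satisfies every finite approximation; letting the intervals exhaust $\mathbb{N}$ then pins down $B_\gamma=A$ exactly. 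Everything else is a direct translation through Proposition~\ref{ultrashifts} and the surjectivity of the ultrafilter map under $\mathfrak{c}^+$-\emph{saturation}.
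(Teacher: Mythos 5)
Your proof is correct, and its overall architecture matches the paper's: $(2)\Leftrightarrow(3)$ is dispatched through Proposition~\ref{ultrashifts} (you make explicit the use of saturation to realize an arbitrary ultrafilter as some $\mathfrak{U}_\gamma$, which the paper leaves implicit), and your $(3)\Rightarrow(1)$ argument --- $\gamma$ witnesses the internal existential statement, then transfer down --- is the same as the paper's. The one genuine divergence is in $(1)\Rightarrow(3)$. You take the family of witness sets $\Delta_I$ indexed by all finite intervals, check it is downward directed (your inclusion $\Delta_J\subseteq\Delta_{I_i}$ for $J\supseteq I_i$ is correct), and invoke $\mathfrak{c}^+$-saturation (in fact enlarging on a countable family of hyper-extensions suffices) to find a single $\gamma$ in $\bigcap_I{}^*\Delta_I$; since for each finite $I$ the condition $\gamma\in{}^*\Delta_I$ unwinds to $n\in A\Leftrightarrow\gamma+n\in{}^*B$ for all $n\in I$, this pins down $A=B_\gamma$. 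The paper instead applies \emph{overspill} to the nested sequence $\Lambda_n$ and picks $\gamma\in{}^*\Lambda_\nu$ for a single infinite $\nu$, so that one equality $\gamma+({}^*\!A\cap[1,\nu])={}^*B\cap[\gamma+1,\gamma+\nu]$ covers all finite $n$ at once. Both are sound; the overspill route is slightly more economical in that it works in any model of nonstandard analysis without appealing to the ambient saturation hypothesis, while your saturation argument is a routine and equally valid substitute in the setting of this section.
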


\begin{proof}
$(2)\Leftrightarrow(3)$ is given by Proposition \ref{ultrashifts}.

$(1)\Rightarrow(3)$. Given $n\in\mathbb{N}$, let us consider the sets 
\begin{equation*}
\Lambda_n\ =\ \left\{x\in\mathbb{N}\mid x+(A\cap[1,n])=B\cap[x+1,x+n]%
\right\}.
\end{equation*}
By the hypothesis, $\Lambda_n\ne\emptyset$ for all $n\in\mathbb{N}$ and so,
by \emph{overspill}, we can pick $\gamma\in{}^*\Lambda_\nu$ for some
infinite $\nu\in{}^*\mathbb{N}$. (We denoted by ${}^*\Lambda_\nu={}^{*}\!F
(\nu)$ the value taken at $\nu$ by the hyper-extension of the function $%
F(n)=\Lambda_n$.) Then $\gamma+({}^*\!A\cap[1,\nu])={}^*B\cap[%
\gamma+1,\gamma+\nu]$, which implies $A=({}^*B-\gamma)\cap\mathbb{N}%
=B_\gamma$.

$(3)\Rightarrow(1)$.\ If $A=B_\gamma$, then for every interval $I\subset%
\mathbb{N}$ we have ${}^*\!A\cap I=A\cap I=B_\gamma\cap I=({}^*B-\gamma)\cap
I$. So, $\gamma$ is a witness of the following property: ``$\exists \xi\in{}%
^*\mathbb{N},\ \xi+({}^*\!A\cap I)={}^*B\cap(\xi+I)$." By \emph{transfer} we
obtain the thesis: ``$\exists x\in\mathbb{N},\ x+(A\cap I)=B\cap (x+I)$."
\end{proof}

Let us now see how ultrafilter-shifts and hyper-shifts can be used to
characterize density. Recall the following

\begin{definition}
The \emph{Schnirelmann density} of $A\subseteq\mathbb{N}$ is defined as 
\begin{equation*}
\sigma(A)\ =\ \inf_{n\in\mathbb{N}}\frac{|A\cap[1,n]|}{n}.
\end{equation*}

The \emph{asymptotic density} of $A\subseteq\mathbb{N}$ is defined as 
\begin{equation*}
d(A)\ =\ \lim_{n\to\infty}\frac{|A\cap[1,n]|}{n},
\end{equation*}
when the limit exists. Otherwise, one defines the \emph{upper density} $%
\overline{d}(A)$ and the \emph{lower density} $\underline{d}(A)$ by taking
the limit superior or the limit inferior of the above sequence, respectively.
\end{definition}

Another notion that is used in combinatorial number theory is the following
uniform version of the asymptotic density.

\begin{definition}
The \emph{Banach density}%
\index{Banach density} of $A\subseteq\mathbb{N}$ is defined as 
\begin{equation*}
\text{BD}(A)\ =\ \lim_{n\to\infty}\frac{\max_{k\in\mathbb{N}}|A\cap[k+1,k+n]|%
}{n}.
\end{equation*}
\end{definition}

Notice that the sequence $a_n=\max_k|A\cap[k+1,k+n]|$ is \emph{subadditive}, 
\emph{i.e.} it satisfies $a_{n+m}\le a_n+a_m$. In consequence, one can show
that $\lim_{n}a_n/n$ actually exists, and in fact $\lim_{n}a_n/n=%
\inf_{n}a_n/n$. It is readily checked that for every $A\subseteq\mathbb{N}$: 
\begin{equation*}
\sigma(A)\ \le\ \underline{d}(A)\ \le\ \overline{d}(A)\ \le\ \text{BD}(A).
\end{equation*}
On the other hand, there exist sets where all the above inequalities are
strict.

Positive Banach densities are preserved under exact embeddings (but the same
property does not hold neither for Schnirelmann nor for asymptotic
densities).

\begin{proposition}
If $B\le_e A$ then $\text{BD}(B)\le\text{BD}(A)$. In consequence:

\begin{enumerate}
\item $\text{BD}(A-\mathcal{U})\le\text{BD}(A)$ for all ultrafilters $%
\mathcal{U}$ on $\mathbb{N}$\,;

\item $\text{BD}(A_\gamma)\le\text{BD}(A)$ for all $\gamma\in{}^*\mathbb{N}$.
\end{enumerate}
\end{proposition}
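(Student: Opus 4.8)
The plan is to use the representation of Banach density as an infimum recalled just before the statement: writing $a_n^A=\max_{k\in\mathbb{N}}|A\cap[k+1,k+n]|$, one has $\text{BD}(A)=\inf_n a_n^A/n$, and likewise $\text{BD}(B)=\inf_n a_n^B/n$ with $a_n^B=\max_{k\in\mathbb{N}}|B\cap[k+1,k+n]|$. So it suffices to prove the pointwise inequality $a_n^B\le a_n^A$ for every $n\in\mathbb{N}$; taking infima then gives $\text{BD}(B)\le\text{BD}(A)$ at once.

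To establish $a_n^B\le a_n^A$, fix $n$ and choose $k_0$ with $|B\cap[k_0+1,k_0+n]|=a_n^B$, i.e.\ an interval $I=[k_0+1,k_0+n]$ on which $B$ attains its maximal count. The essential point is that the hypothesis $B\le_e A$ provides a witness for \emph{every} finite interval, in particular for this $I$: there is $x\in\mathbb{N}$ with $x+(B\cap I)=A\cap(x+I)$. Since translation by $x$ is injective, $|A\cap(x+I)|=|B\cap I|=a_n^B$, and $x+I=[x+k_0+1,x+k_0+n]$ is again an interval of length $n$; hence $a_n^A\ge|A\cap(x+I)|=a_n^B$, as wanted. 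I expect no serious obstacle, but it is worth noting why the translation freedom in the definition of $\le_e$ is indispensable: applying the definition only at the initial segment $[1,n]$ would yield merely $a_n^A\ge|B\cap[1,n]|$, which bounds Schnirelmann-type quantities rather than Banach density — in keeping with the parenthetical remark that Schnirelmann and asymptotic densities are \emph{not} preserved under exact embeddings. The one bookkeeping subtlety is to make sure the witness $x$ may be taken in $\mathbb{N}$, so that $x+I$ really is an interval of natural numbers of the same length; this is visible from the transfer argument in the preceding proposition, where $x$ is produced as a genuine natural-number shift.

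For the two consequences, invoke the preceding proposition, which asserts that for $A,B\subseteq\mathbb{N}$ the three conditions $A\le_e B$, ``$A$ is an ultrafilter-shift of $B$'', and ``$A=B_\gamma$ for some $\gamma\in{}^*\mathbb{N}$'' are equivalent. Thus $A-\mathcal{U}\le_e A$ for every ultrafilter $\mathcal{U}$ on $\mathbb{N}$ (it is by definition an ultrafilter-shift of $A$), and $A_\gamma\le_e A$ for every $\gamma\in{}^*\mathbb{N}$; indeed, by Proposition \ref{ultrashifts} these are the same family, since $A_\gamma=A-\mathfrak{U}_\gamma$. Applying the inequality just proved with $B:=A-\mathcal{U}$, respectively $B:=A_\gamma$, gives $\text{BD}(A-\mathcal{U})\le\text{BD}(A)$ and $\text{BD}(A_\gamma)\le\text{BD}(A)$, completing the proof.
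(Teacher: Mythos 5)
Your proof is correct and follows essentially the same route as the paper: the key step in both is that the exact-embedding witness translates a length-$n$ interval carrying a (near-)maximal count of $B$ onto a length-$n$ interval of $A$ with the identical count, and the consequences follow from the preceding equivalence between exact embeddings, ultrafilter-shifts, and hyper-shifts. The only cosmetic difference is that you prove the pointwise inequality $\max_k|B\cap[k+1,k+n]|\le\max_k|A\cap[k+1,k+n]|$ and take infima, whereas the paper passes to the limit along a sequence of intervals witnessing $\text{BD}(B)$; both are valid.
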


\begin{proof}
Let $\langle I_n\mid n\in\mathbb{N}\rangle$ be a sequence of intervals with
length $|I_n|=n$ and such that $\lim_n |B\cap I_n|/n=\text{BD}(B)$. By the
hypothesis, for every $n$ there exists $x_n$ such that $x_n+(B\cap
I_n)=A\cap J_n$ where $J_n=x_n+I_n$ is the interval of length $n$ obtained
by shifting $I_n$ by $x_n$. Then $\text{BD}(A)\ge\lim_n|A\cap J_n|/n=
\lim_n|B\cap I_n|/n=\text{BD}(B)$.
\end{proof}

The Banach density of a set equals the maximum density of its
ultrafilter-shifts.

\begin{theorem}
For every $A\subseteq\mathbb{N}$ there exists a hyper-shift $A_\gamma$ such
that 
\begin{equation*}
\sigma(A_\gamma)\ =\ d(A_\gamma)\ =\ \text{BD}(A_\gamma)\ =\ \text{BD}(A).
\end{equation*}
Equivalently, there exists an ultrafilter $\mathcal{U}=\mathfrak{U}_\gamma$
on $\mathbb{N}$ such that 
\begin{equation*}
\sigma(A-\mathcal{U})\ =\ d(A-\mathcal{U})\ =\ \text{BD}(A-\mathcal{U})\ =\ 
\text{BD}(A).
\end{equation*}
\end{theorem}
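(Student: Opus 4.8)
The plan is to realize $\mathrm{BD}(A)$ via a hyperfinite window and then choose $\gamma$ as the left endpoint of an optimal such window. Concretely, let $\beta=\mathrm{BD}(A)$ and first record the standard fact (following from subadditivity of $a_n=\max_k|A\cap[k+1,k+n]|$) that $\beta=\inf_n a_n/n=\lim_n a_n/n$. For each $n\in\mathbb{N}$ pick $k_n\in\mathbb{N}$ with $|A\cap[k_n+1,k_n+n]|=a_n$; by \emph{transfer}, for an infinite $\nu\in{}^*\mathbb{N}$ there is an internal $K\in{}^*\mathbb{N}$ with $|{}^*\!A\cap[K+1,K+\nu]|={}^*a_\nu$, and ${}^*a_\nu/\nu\approx\beta$ since $a_n/n\to\beta$. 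I then set $\gamma=K$. The point of choosing the \emph{optimal} window is that the optimality propagates down to all sub-windows: because $a_m/m\ge\beta$ for every finite $m$, but the $\nu$-window is essentially extremal, every sub-window $[K+j+1,K+j+m]$ inside it must contain close to $\beta m$ elements of ${}^*\!A$ — otherwise the complementary part of the $\nu$-window would have to exceed its own Banach bound.

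The key estimate I would establish is: for every finite $m$ and every $j$ with $0\le j$ and $j+m\le\nu$, one has $|{}^*\!A\cap[K+j+1,K+j+m]|\ge \beta m - \epsilon$ for every standard $\epsilon>0$ — more precisely the ratio is infinitely close to or above $\beta$. To see this, split $[K+1,K+\nu]$ into the sub-block $[K+j+1,K+j+m]$ and the rest; the rest is a union of at most two intervals of total length $\nu-m$, each of which satisfies the bound $|{}^*\!A\cap(\text{interval of length }\ell)|\le {}^*a_\ell\le \ell(\beta+o(1))$ by \emph{transfer} of $a_\ell\le\ell\cdot\sup a_i/i$; since the whole block has $\approx\beta\nu$ elements, the sub-block is forced to have at least $\approx\beta m$. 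Translating back: $\gamma+n\in{}^*\!A$ for a set of $n\in\mathbb{N}$ whose intersection with every initial segment $[1,N]$ and every window $[k+1,k+N]$ has size $\ge(\beta-\epsilon)N$ for all standard $\epsilon$. This gives simultaneously $\sigma(A_\gamma)\ge\beta$ (using initial segments starting at $K+1$, hence $n$ running over $[1,N]$) and $\underline{d}(A_\gamma)\ge\beta$, hence $d(A_\gamma)$ exists and equals $\beta$. Since always $\sigma\le\underline d\le\overline d\le\mathrm{BD}$ and $\mathrm{BD}(A_\gamma)\le\mathrm{BD}(A)=\beta$ by the previous proposition, all four quantities collapse to $\beta$.

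One technical care point: to get $\sigma(A_\gamma)=\beta$ on the nose (an infimum over \emph{all} finite $n\ge 1$, including small $n$) rather than just $\liminf=\beta$, I need that no short initial segment of $A_\gamma$ is deficient. This is handled by the same window-splitting argument applied with $j=0$ and arbitrary finite $m$, giving $|{}^*\!A\cap[K+1,K+m]|/m\gtrsim\beta$ for \emph{every} finite $m$; since $m$ is standard this yields $|A_\gamma\cap[1,m]|/m\ge\beta$ exactly (an inequality between standard rationals that are infinitely close forces $\ge$, because $|A_\gamma\cap[1,m]|$ is a standard integer). The main obstacle I anticipate is making the "the complement of the sub-block cannot absorb too much" estimate fully rigorous when $j+m$ is comparable to $\nu$ or when the sub-block sits at the very end, so that the complement is a single interval of length $\nu-m$ rather than two; but in all cases the bound $|{}^*\!A\cap(\text{interval})|\le \beta\cdot(\text{length})+\text{error}$ with $\text{error}/\nu\approx 0$ is available by \emph{transfer} of the definition of $a_n$, so the arithmetic goes through. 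The \emph{overspill}/\emph{transfer} infrastructure and Theorem on $u$-monads are exactly the tools needed, and no saturation beyond what is already assumed is required. Finally, the "equivalently" clause is immediate from Proposition \ref{ultrashifts}: $A_\gamma=A-\mathfrak{U}_\gamma$, so setting $\mathcal{U}=\mathfrak{U}_\gamma$ translates the statement about hyper-shifts into the statement about ultrafilter-shifts verbatim.
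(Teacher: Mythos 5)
Your reduction at the end is fine (the chain $\sigma\le\underline{d}\le\overline{d}\le\text{BD}(A_\gamma)\le\text{BD}(A)$ plus Proposition \ref{ultrashifts} for the ``equivalently'' clause), so everything hinges on proving $\sigma(A_\gamma)\ge\text{BD}(A)$; and that is exactly where your construction breaks. Choosing $\gamma=K$ to be the left endpoint of a window of length $\nu$ that \emph{maximizes} $\|{}^*\!A\cap[K+1,K+\nu]\|$ does not work, because optimality at scale $\nu$ gives no information at scale $O(1)$: an optimal window may begin with an initial gap of infinite length $g$ with $g/\nu\approx 0$ containing no points of ${}^*\!A$ at all. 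Concretely, take $A=\bigcup_n[4^n+2^n,\,2\cdot 4^n)$. Then $\text{BD}(A)=1$, and for each $n$ the window $[4^n+1,\,2\cdot 4^n]$ of length $4^n$ attains the maximum count $a_{4^n}=4^n-2^n$ (no window of that length can meet two blocks, since consecutive blocks are more than $4^n$ apart). Transferring to an infinite index $\mu$ and taking $\nu=4^\mu$, $K=4^\mu$ is a legitimate choice of your $K$, yet $\gamma+j\notin{}^*\!A$ for all $j<2^\mu$, so $A_\gamma=\emptyset$ and $\sigma(A_\gamma)=d(A_\gamma)=0\ne 1$. So at least some (in fact, here, natural) choices of the optimal window fail outright.

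The underlying quantitative error is in your ``the complement cannot absorb too much'' step. What you actually have is ${}^*a_\nu=\beta\nu+o(\nu)$ and, for the complementary intervals of infinite length $\ell$, only ${}^*a_\ell\le\beta\ell+o(\ell)$ (for finite $\ell$ not even that). Subtracting, the forced lower bound on a finite sub-block of length $m$ is $\beta m-o(\nu)$, and $o(\nu)$ can perfectly well be infinite (e.g.\ of order $\sqrt{\nu}$), which swallows the finite main term $\beta m$ entirely. Your phrase ``exceed its own Banach bound'' silently upgrades a ratio bound with error $o(\ell)$ to an additive bound with error $O(1)$, and that upgrade is false. The paper's proof is built precisely to circumvent this: it works with \emph{two} infinite scales, a near-optimal window of length $N$ and a second infinite $\nu$ with $\nu/N\approx 0$, and then proves by an internal greedy decomposition (by contradiction, chopping $[\Omega+1,\Omega+N]$ into consecutive ``deficient'' blocks of length $\le\nu$) that some starting point $\gamma$ inside the big window has \emph{every} initial segment of length $i\le\nu$ of density $\ge a-\nu/N$. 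That statement controls all finite initial segments simultaneously and yields $\sigma(A_\gamma)\ge\text{BD}(A)$. Some such selection argument for a ``good starting point'' is unavoidable; it cannot be replaced by extremality of a single window.
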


\begin{proof}
By the nonstandard characterization of limit, given any infinite $N$, we can
pick an interval $I=[\Omega+1,\Omega+N]\subset{}^*\mathbb{N}$ of length $N$
such that $\|{}^*\!A\cap I\|/N=a\approx\text{BD}(A)$. (We use delimiters $%
\|X\|$ to denote the internal cardinality of an internal set $X$, and $%
\xi\approx\eta$ to mean that $\xi$ and $\eta$ are infinitely close, that is, 
$\xi-\eta$ is infinitesimal.) Now fix an infinite $\nu$ such that $%
\nu/N\approx 0$.

\begin{itemize}
\item \emph{Claim.}\ There exists $\gamma$ such that for every $1\le i\le\nu$%
: 
\begin{equation*}
\frac{\|{}^*\!A\cap[\gamma+1,\gamma+i]\|}{i}\ \ge\ a-\frac{\nu}{N}.
\end{equation*}
\end{itemize}

Notice that the above claim yields the thesis. Indeed, for every finite $n\in%
\mathbb{N}$, trivially $n\le\nu$, and so 
\begin{equation*}
\frac{|A_\gamma\cap[1,n]|}{n}\ =\ \frac{\|{}^*\!A\cap[\gamma+1,\gamma+n]\|}{n%
}\ \ge\ a-\frac{\nu}{N}\ \approx\ \text{BD}(A).
\end{equation*}
This implies that $\sigma(A_\gamma)\ge\text{BD}(A)$, and the thesis follows
because $\sigma(A_\gamma)\le\underline{d}(A_\gamma)\le\overline{d}%
(A_\gamma)\le \text{BD}(A_\gamma)\le\text{BD}(A)$. To prove the claim, let
us proceed by contradiction, and for every $\gamma$, let 
\begin{equation*}
\psi(\gamma)\ =\ \min\left\{ 1\le i\le\nu \ \Big|\ \frac{\|{}^*\!A\cap[%
\gamma+1,\gamma+i]\|}{i}<a-\frac{\nu}{N}\right\}.
\end{equation*}
By internal induction, define $\xi_1=\Omega$, $\xi_{j+1}=\xi_j+\psi(\xi_j)$,
and stop at step $\mu$ when $\Omega+N-\nu\le\xi_\mu<\Omega+N$. Then we would
have 
\begin{equation*}
a-\frac{\nu}{N}\ \le\ \frac{\|{}^*\!A\cap[\Omega+1,\xi_\mu]\|}{N}\ =\ \frac{1%
}{N}\sum_{j=1}^{\mu-1}\|{}^*\!A\cap[\xi_j+1,\xi_{j+1}]\|\ <
\end{equation*}
\begin{equation*}
<\ \frac{1}{N}\sum_{j=1}^{\mu-1}\psi(\xi_j)\left(a-\frac{\nu}{N}\right)\ =\ 
\frac{\xi_\mu-\xi_1}{N}\left(a-\frac{\nu}{N}\right)\ <\ a-\frac{\nu}{N},
\end{equation*}
that gives the desired contradiction.
\end{proof}

A series of relevant results in additive combinatorics have been recently
obtained by R. Jin by nonstandard analysis. In some of them, nonstandard
properties of Banach density like the one stated in above theorem, play an
important role (see, \emph{e.g.}, the survey \cite{11jin} and references
therein).

\section{Nonstandard characterizations in the space $(\protect\beta\mathbb{N}%
,\oplus)$}

The space of ultrafilters $\beta\mathbb{N}$ can be equipped with
a``pseudo-sum" operation $\oplus$ that extends the usual sum between natural
numbers. The resulting space $(\beta\mathbb{N},\oplus)$ and its
generalizations have been extensively studied during the last decades,
producing plenty of interesting applications in Ramsey theory and
combinatorics of numbers (see the monography \cite{11hs}).

\begin{definition}
The \emph{pseudo-sum}%
\index{ultrafilter!pseudo-sum} $\mathcal{U}\oplus\mathcal{V}$ of two
ultrafilters $\mathcal{U},\mathcal{V}$ on $\mathbb{N}$ is the image $S(%
\mathcal{U}\otimes\mathcal{V})$ of the tensor product $\mathcal{U}\otimes%
\mathcal{V}$ under the sum function $S(n,m)=n+m$. Equivalently, for every $%
A\subseteq\mathbb{N}$: 
\begin{equation*}
A\in\mathcal{U}\oplus\mathcal{V}\ \Longleftrightarrow\ \{n\mid A-n\in%
\mathcal{V}\}\in\mathcal{U},
\end{equation*}
where $A-n=\{m\in\mathbb{N}\mid m+n\in A\}$ is the \emph{leftward $n$-shift}
of $A$.
\end{definition}

Notice that $\oplus$ actually extends the sum on $\mathbb{N}$; indeed, if $%
\mathcal{U}_n$ and $\mathcal{U}_m$ are the principal ultrafilters determined
by $n$ and $m$ respectively, then $\mathcal{U}_n\oplus\mathcal{U}_m=\mathcal{%
U}_{m+n}$ is the principal ultrafilter determined by $n+m$.

The following property is a straight consequence of the definitions.

\begin{proposition}
\label{pseudosumgenerators} $\mathcal{U}\oplus\mathcal{V}=\mathcal{W}$ if
and only if there exists a tensor pair $(\alpha,\beta)$ such that $\mathfrak{%
U}_\alpha=\mathcal{U}$, $\mathfrak{U}_\beta=\mathcal{V}$ and $\mathfrak{U}%
_{\alpha+\beta}=\mathcal{W}$. In consequence: 
\begin{equation*}
\{\mathcal{U}\oplus\mathcal{V}\mid \mathcal{U},\mathcal{V}\in\beta\mathbb{N}%
\}\ =\ \{\mathfrak{U}_{\alpha+\beta}\mid (\alpha,\beta)\ 
\text{tensor pair\,}\}.
\end{equation*}
\end{proposition}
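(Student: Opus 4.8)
The plan is to derive both implications of the equivalence from a single identity about images of pair-generated ultrafilters, together with the characterization of tensor pairs in Theorem~\ref{alltensorpairs}. The first step is to record that, exactly as in the earlier computation of the image of $\mathfrak{U}_\alpha$ under a function $\mathbb{N}\to\mathbb{N}$, applying the sum map $S(n,m)=n+m$ to a pair-generated ultrafilter yields, for every $A\subseteq\mathbb{N}$,
\[
A\in S\bigl(\mathfrak{U}_{(\alpha,\beta)}\bigr)\ \Leftrightarrow\ (\alpha,\beta)\in{}^*\bigl(S^{-1}(A)\bigr)\ \Leftrightarrow\ \alpha+\beta\in{}^*\!A\ \Leftrightarrow\ A\in\mathfrak{U}_{\alpha+\beta},
\]
where the middle step is transfer applied to $S^{-1}(A)=\{(n,m)\mid n+m\in A\}$. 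Thus $S(\mathfrak{U}_{(\alpha,\beta)})=\mathfrak{U}_{\alpha+\beta}$ for an arbitrary pair $(\alpha,\beta)$.

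For the ``if'' direction, suppose $(\alpha,\beta)$ is a tensor pair with $\mathfrak{U}_\alpha=\mathcal{U}$, $\mathfrak{U}_\beta=\mathcal{V}$ and $\mathfrak{U}_{\alpha+\beta}=\mathcal{W}$. Condition~$(1)$ of Theorem~\ref{alltensorpairs} (and its trivial analogue when a coordinate is finite) gives $\mathfrak{U}_{(\alpha,\beta)}=\mathfrak{U}_\alpha\otimes\mathfrak{U}_\beta=\mathcal{U}\otimes\mathcal{V}$, so the identity above produces
\[
\mathcal{U}\oplus\mathcal{V}\ =\ S(\mathcal{U}\otimes\mathcal{V})\ =\ S\bigl(\mathfrak{U}_{(\alpha,\beta)}\bigr)\ =\ \mathfrak{U}_{\alpha+\beta}\ =\ \mathcal{W}.
\]

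For the ``only if'' direction, assume $\mathcal{U}\oplus\mathcal{V}=\mathcal{W}$. Using that $\mathfrak{c}^+$-saturation makes every ultrafilter on $\mathbb{N}\times\mathbb{N}$ of the form $\mathfrak{U}_{(\alpha,\beta)}$, I would pick $(\alpha,\beta)$ with $\mathfrak{U}_{(\alpha,\beta)}=\mathcal{U}\otimes\mathcal{V}$. Since $\mathcal{U}\otimes\mathcal{V}\supseteq\mathcal{U}\times\mathcal{V}$, writing $\mathcal{U}=\mathfrak{U}_\gamma$ and $\mathcal{V}=\mathfrak{U}_\delta$ and applying Proposition~\ref{productofultrafilters} gives $\alpha\ueq\gamma$ and $\beta\ueq\delta$, hence $\mathfrak{U}_\alpha=\mathcal{U}$ and $\mathfrak{U}_\beta=\mathcal{V}$. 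Consequently $\mathfrak{U}_{(\alpha,\beta)}=\mathcal{U}\otimes\mathcal{V}=\mathfrak{U}_\alpha\otimes\mathfrak{U}_\beta$ is a tensor product, so $(\alpha,\beta)$ is a tensor pair — by Theorem~\ref{alltensorpairs} when $\alpha,\beta$ are infinite, and trivially when $\mathcal{U}$ or $\mathcal{V}$ is principal. Finally $\mathfrak{U}_{\alpha+\beta}=S(\mathfrak{U}_{(\alpha,\beta)})=S(\mathcal{U}\otimes\mathcal{V})=\mathcal{U}\oplus\mathcal{V}=\mathcal{W}$, as required. The ``in consequence'' equality then drops out: $\subseteq$ is the ``only if'' direction applied to $\mathcal{U}\oplus\mathcal{V}$, and $\supseteq$ is the ``if'' direction applied with $\mathcal{U}=\mathfrak{U}_\alpha$, $\mathcal{V}=\mathfrak{U}_\beta$, $\mathcal{W}=\mathfrak{U}_{\alpha+\beta}$.

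I do not expect a genuine obstacle; the only point needing a moment's care is the edge case where $\mathcal{U}$ or $\mathcal{V}$ is principal, so that a coordinate of the witnessing pair is a finite natural number outside the literal scope of Theorem~\ref{alltensorpairs}. This is dispatched by the remark that any pair with a finite coordinate is automatically a tensor pair, together with the direct check (using ${}^*X_k={}^*(X_k)$ for finite $k$) that $\mathfrak{U}_{(n,\beta)}=\mathfrak{U}_n\otimes\mathfrak{U}_\beta$. Everything else is a bookkeeping chain of equivalences.
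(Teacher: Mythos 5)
Your proof is correct and follows essentially the same route as the paper: pick $(\alpha,\beta)$ generating $\mathcal{U}\otimes\mathcal{V}$ by saturation for one direction, and use $\mathfrak{U}_{(\alpha,\beta)}=\mathfrak{U}_\alpha\otimes\mathfrak{U}_\beta$ together with $S(\mathfrak{U}_{(\alpha,\beta)})=\mathfrak{U}_{\alpha+\beta}$ for the other. You merely spell out more of the bookkeeping (the image identity for $S$, the appeal to Proposition~\ref{productofultrafilters}, and the principal/finite-coordinate edge case) that the paper leaves implicit.
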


\begin{proof}
Given $\mathcal{W}=\mathfrak{U}_\xi\oplus\mathfrak{U}_\eta$, pick a pair $%
(\alpha,\beta)$ with $\mathfrak{U}_{(\alpha,\beta)}=\mathfrak{U}_\xi\otimes%
\mathfrak{U}_\eta$. Then $(\alpha,\beta)$ is a tensor pair such that $%
\mathfrak{U}_\alpha=\mathfrak{U}_\xi$, $\mathfrak{U}_\beta=\mathfrak{U}_\eta$%
, and $\mathcal{W}=S(\mathfrak{U}_\xi\otimes\mathfrak{U}_\eta)= S(\mathfrak{U%
}_{(\alpha,\beta)})=\mathfrak{U}_{\alpha+\beta}$. Conversely, if $%
(\alpha,\beta)$ is a tensor pair, then $\mathfrak{U}_\alpha\oplus\mathfrak{U}%
_\beta=S(\mathfrak{U}_\alpha\otimes\mathfrak{U}_\beta)= S(\mathfrak{U}%
_{(\alpha,\beta)})=\mathfrak{U}_{\alpha+\beta}$.
\end{proof}

We recall that the space $\beta\mathbb{N}$ of ultrafilters on $\mathbb{N}$
is endowed with the topology as determined by the following family of basic
(cl)open sets, for $A\subseteq\mathbb{N}$: 
\begin{equation*}
\mathcal{O}_A\ =\ \{\mathcal{U}\mid A\in\mathcal{U}\}.
\end{equation*}

In this way, $\beta\mathbb{N}$ is the \emph{Stone-\v{C}ech compactification}
of the discrete space $\mathbb{N}$. Indeed, $\beta\mathbb{N}$ is Hausdorff
and compact, and if one identifies each $n\in\mathbb{N}$ with the
corresponding principal ultrafilter, then $\mathbb{N}$ is dense in $\beta%
\mathbb{N}$. Moreover, one can prove that every $f:\mathbb{N}\to K$ where $K$
is Hausdorff compact space $K$ has a (unique) continuous extension $\beta
f:\beta\mathbb{N}\to K$. When equipped with the pseudo-sum operation, $\beta%
\mathbb{N}$ has the structure of a \emph{compact topological left semigroup}%
, because for any fixed $\mathcal{V}$, the ``product on the left'' by $%
\mathcal{U}$: 
\begin{equation*}
\psi_\mathcal{V}:\mathcal{U}\mapsto\mathcal{U}\oplus\mathcal{V}
\end{equation*}
is a continuous function. We remark that the pseudo-sum operation is
associative, but it fails badly to be commutative (see Proposition \ref%
{noncommutativity}).

Connections between pseudo-sums and hyper-shifts are established in the next
proposition.

\begin{proposition}
\label{hyper-shifts} Let $\alpha,\beta,\gamma\in{}^*\mathbb{N}$. Then:

\begin{enumerate}
\item $A\in\mathfrak{U}_\alpha\oplus\mathfrak{U}_\beta\ \Leftrightarrow\
A_\beta\in\mathfrak{U}_\alpha\ \Leftrightarrow\ \alpha\in{}^*(A_\beta)$.

\item For every $n\in\mathbb{N}$, $(A-n)_\beta=A_\beta-n$.

\item For every $n\in\mathbb{N}$, $A-n\in\mathfrak{U}_\alpha\oplus\mathfrak{U%
}_\beta\ \Leftrightarrow\ n\in(A_\beta)_\alpha$.

\item $\mathfrak{U}_\gamma=\mathfrak{U}_\alpha\oplus\mathfrak{U}_\beta\
\Leftrightarrow\ A_\gamma=(A_\beta)_\alpha$ for every $A\subseteq\mathbb{N}$.

\item If $(\alpha,\beta)$ is a tensor pair, then $A_{\alpha+\beta}=(A_%
\beta)_\alpha$ for every $A\subseteq\mathbb{N}$.
\end{enumerate}
\end{proposition}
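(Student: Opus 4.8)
Proof proposal for Proposition \ref{hyper-shifts}.

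The plan is to prove the five items essentially in the order stated, since each rests on the previous ones together with the basic coherence properties of hyper-shifts and the definitions of $\oplus$ and of tensor pair. For item (1), I would simply unwind definitions: by Proposition \ref{ultrashifts}, $A_\beta = A - \mathfrak{U}_\beta = \{n \mid A - n \in \mathfrak{U}_\beta\}$, and by the definition of pseudo-sum $A \in \mathfrak{U}_\alpha \oplus \mathfrak{U}_\beta \Leftrightarrow \{n \mid A - n \in \mathfrak{U}_\beta\} \in \mathfrak{U}_\alpha$; these two facts give the first equivalence, and the second equivalence $A_\beta \in \mathfrak{U}_\alpha \Leftrightarrow \alpha \in {}^*(A_\beta)$ is just the definition of $\mathfrak{U}_\alpha$. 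Item (2) is a special case of part (1) of the earlier proposition on coherence of hyper-shifts with finite shifts, so I would just cite it (or reprove it in one line: $n \in (A-m)_\beta \Leftrightarrow \beta + n \in {}^*(A-m) = {}^*\!A - m \Leftrightarrow \beta + (n+m) \in {}^*\!A \Leftrightarrow n+m \in A_\beta \Leftrightarrow n \in A_\beta - m$).

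For item (3), I would apply item (1) twice: $A - n \in \mathfrak{U}_\alpha \oplus \mathfrak{U}_\beta \Leftrightarrow (A-n)_\beta \in \mathfrak{U}_\alpha$, then use item (2) to rewrite $(A-n)_\beta = A_\beta - n$, so this becomes $A_\beta - n \in \mathfrak{U}_\alpha$, and applying the definition of hyper-shift once more (with the base set $A_\beta$ and the number $\alpha$) this is $\alpha + n \in {}^*(A_\beta)$, i.e. $n \in (A_\beta)_\alpha$. Item (4) then follows by a clean chain of equivalences: $\mathfrak{U}_\gamma = \mathfrak{U}_\alpha \oplus \mathfrak{U}_\beta$ holds iff for every $A$ one has $A \in \mathfrak{U}_\gamma \Leftrightarrow A \in \mathfrak{U}_\alpha \oplus \mathfrak{U}_\beta$; by the definition of $\mathfrak{U}_\gamma$ and item (1) this is $\gamma \in {}^*\!A \Leftrightarrow \alpha \in {}^*(A_\beta)$, i.e. $0 \in A_\gamma \Leftrightarrow 0 \in (A_\beta)_\alpha$ — but applying this to all leftward shifts $A - n$ and using item (3) on the right side and the analogous statement $n \in A_\gamma \Leftrightarrow A - n \in \mathfrak{U}_\gamma$ on the left side upgrades ``agree at $0$ for all $A$'' to ``agree as sets for all $A$''. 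One direction is immediate; for the other, if $A_\gamma = (A_\beta)_\alpha$ for every $A$, then in particular membership of $0$ agrees for every set, which by the above is exactly $A \in \mathfrak{U}_\gamma \Leftrightarrow A \in \mathfrak{U}_\alpha \oplus \mathfrak{U}_\beta$.

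Finally, item (5) is a direct consequence of item (4) together with Proposition \ref{pseudosumgenerators}: if $(\alpha,\beta)$ is a tensor pair, then $\mathfrak{U}_{\alpha+\beta} = \mathfrak{U}_\alpha \oplus \mathfrak{U}_\beta$ by that proposition, so taking $\gamma = \alpha + \beta$ in item (4) yields $A_{\alpha+\beta} = (A_\beta)_\alpha$ for every $A \subseteq \mathbb{N}$. I do not anticipate a genuine obstacle here; the only point requiring a little care is the logical bookkeeping in item (4), namely making sure that agreement of the hyper-shifts as \emph{sets} (not merely agreement of $0$-membership) is what matches the ultrafilter identity — this is handled by running the equivalence through all leftward shifts $A - n$ simultaneously, which is legitimate since the class of subsets of $\mathbb{N}$ is closed under $A \mapsto A - n$.
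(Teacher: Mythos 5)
Your proposal is correct and follows essentially the same route as the paper: items (1)--(3) by unwinding definitions and the coherence of hyper-shifts with finite shifts, item (4) by running the equivalence through all shifts $A-n$ for the forward direction and using $0$-membership for the converse (the paper phrases the converse as a contradiction, but it is the same observation that $0\in(A_\beta)_\alpha\Leftrightarrow A\in\mathfrak{U}_\alpha\oplus\mathfrak{U}_\beta$ while $0\in A_\gamma\Leftrightarrow A\in\mathfrak{U}_\gamma$), and item (5) from Proposition \ref{pseudosumgenerators} together with item (4). No gaps.
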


\begin{proof}
Equivalences in $(1)$ directly follow from the definitions. $(2)$ is proved
by the chain of equivalences: $k\in (A-n)_\beta\Leftrightarrow k+\beta\in{}%
^*(A-n)={}^*\!A-n\Leftrightarrow k+\beta+n\in{}^*\!A\Leftrightarrow k+n\in
A_\beta\Leftrightarrow k\in A_\beta-n$. $(3)$ By using the previous
properties, we obtain $A-n\in\mathfrak{U}_\alpha\oplus\mathfrak{U}%
_\beta\Leftrightarrow \alpha\in{}^*[(A-n)_\beta]={}^*(A_\beta-n)={}^*(A_%
\beta)-n\Leftrightarrow \alpha+n\in{}^*\!A_\beta\Leftrightarrow
n\in(A_\beta)_\alpha$. $(4)$ Assume first $\mathfrak{U}_\gamma=\mathfrak{U}%
_\alpha\oplus\mathfrak{U}_\beta$. For every $n\in\mathbb{N}$, by $(3)$ one
has that $n\in(A_\beta)_\alpha\Leftrightarrow A-n\in\mathfrak{U}_\alpha\oplus%
\mathfrak{U}_\beta \Leftrightarrow A-n\in\mathfrak{U}_\gamma\Leftrightarrow%
\gamma\in{}^*(A-n)={}^*\!A-n \Leftrightarrow n\in A_\gamma$, and so $%
(A_\beta)_\alpha=A_\gamma$. Conversely, assume by contradiction that we can
pick $A\in\mathfrak{U}_\alpha\oplus\mathfrak{U}_\beta$ with $A\notin%
\mathfrak{U}_\gamma$. Then $\alpha\in{}^*(A_\beta)$ and $\gamma\notin{}^*\!A$%
, and hence $(A_\beta)_\alpha\ne A_\gamma$ because $0\in(A_\beta)_\alpha$
but $0\notin A_\gamma$. $(5)$ By Proposition \ref{pseudosumgenerators}, $%
\mathfrak{U}_\alpha\oplus\mathfrak{U}_\beta=\mathfrak{U}_{\alpha+\beta}$,
and so the thesis directly follows from $(4)$.
\end{proof}

As a first example of use of hyper-shifts in $(\beta\mathbb{N},\oplus)$, let
us prove the continuity of the ``product on the left" functions. This is
easily done by showing that $\psi^{-1}_{\mathfrak{U}_\beta}(\mathcal{O}_A)=%
\mathcal{O}_{A_\beta}$ for every $\beta\in{}^*\mathbb{N}$ and for every $%
A\subseteq\mathbb{N}$; indeed, for every $\alpha\in{}^*\mathbb{N}$ one has: 
\begin{equation*}
\mathfrak{U}_\alpha\in\psi_{\mathfrak{U}_\beta}^{-1}(\mathcal{O}_A)\
\Leftrightarrow\ \mathfrak{U}_\alpha\oplus\mathfrak{U}_\beta\in\mathcal{O}%
_A\ \Leftrightarrow
\end{equation*}
\begin{equation*}
A\in\mathfrak{U}_\alpha\oplus\mathfrak{U}_\beta\ \Leftrightarrow\ A_\beta\in%
\mathfrak{U}_\alpha\ \Leftrightarrow\ \mathfrak{U}_\alpha\in\mathcal{O}%
_{A_\beta}.
\end{equation*}

As one can readily verify, $\mathcal{U}_n\oplus\mathcal{V}=\mathcal{V}\oplus%
\mathcal{U}_n$ for every principal ultrafilter $\mathcal{U}_n$. We now use
hyper-shifts to show that the center of $(\beta\mathbb{N},\oplus)$ actually
contains only principal ultrafilters.

\begin{theorem}
\label{noncommutativity} For every non-principal ultrafilter $\mathcal{U}$
there exists a non-principal ultrafilter $\mathcal{V}$ such that $\mathcal{U}%
\oplus\mathcal{V}\ne\mathcal{V}\oplus\mathcal{U}$.
\end{theorem}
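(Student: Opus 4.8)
I want to find, for a given non-principal $\mathcal{U}=\mathfrak{U}_\alpha$, a non-principal $\mathcal{V}=\mathfrak{U}_\beta$ with $\mathcal{U}\oplus\mathcal{V}\neq\mathcal{V}\oplus\mathcal{U}$. The natural strategy is to exploit the hyper-shift machinery just developed, together with the non-commutativity witness $\Delta^+=\{(n,m)\mid n<m\}$ that was flagged after the definition of tensor product: for non-principal $\mathcal{U}$, $\mathcal{V}$ one has $\Delta^+\in\mathcal{V}\otimes\mathcal{U}$ but $\Delta^+\notin\mathcal{U}\otimes\mathcal{V}$. The idea is to pick $\beta$ much larger than $\alpha$ — concretely, using Theorem~\ref{existencetensorpairs}(1), choose $\beta{\,{\sim}_{{}_{\!\!\!\!\! u}}\;}\beta_0$ (for any fixed non-principal $\beta_0$) such that $(\alpha,\beta)$ is a tensor pair with $\beta>{}^*\!f(\alpha)$ for all $f$ with ${}^*\!f(\alpha)\notin\mathbb{N}$; equivalently $\beta$ lies above the whole $u$-monad-indexed image set of $\alpha$. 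Then $(\alpha,\beta)$ is a tensor pair, so $\mathfrak{U}_{\alpha+\beta}=\mathfrak{U}_\alpha\oplus\mathfrak{U}_\beta$ by Proposition~\ref{pseudosumgenerators}. The plan is to show that $(\beta,\alpha)$ is \emph{not} a tensor pair, or more precisely that no tensor pair $(\beta',\alpha')$ with $\mathfrak{U}_{\beta'}=\mathfrak{U}_\beta$, $\mathfrak{U}_{\alpha'}=\mathfrak{U}_\alpha$ can have $\mathfrak{U}_{\beta'+\alpha'}=\mathfrak{U}_{\alpha+\beta}$.

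First I would make the reversed pseudo-sum explicit: $\mathcal{V}\oplus\mathcal{U}=S(\mathfrak{U}_\beta\otimes\mathfrak{U}_\alpha)$, and by Proposition~\ref{pseudosumgenerators} this is $\mathfrak{U}_{\beta''+\alpha''}$ for some tensor pair $(\beta'',\alpha'')$ with $\mathfrak{U}_{\beta''}=\mathfrak{U}_\beta$, $\mathfrak{U}_{\alpha''}=\mathfrak{U}_\alpha$. By condition~(7) of Theorem~\ref{alltensorpairs} applied to this tensor pair, since $\alpha''$ is infinite we get $\alpha''>{}^*\!g(\beta'')$ for every $g$ with ${}^*\!g(\beta'')\notin\mathbb{N}$ — in particular $\alpha''$ dominates $\beta''$ (take $g=\mathrm{id}$), so $\beta''<\alpha''$. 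Now I would locate a set in exactly one of the two sums. Using the hyper-shift characterization (Proposition~\ref{hyper-shifts}(1)): $A\in\mathfrak{U}_\alpha\oplus\mathfrak{U}_\beta\Leftrightarrow\alpha\in{}^*(A_\beta)$, and $A\in\mathfrak{U}_{\beta''}\oplus\mathfrak{U}_{\alpha''}\Leftrightarrow\beta''\in{}^*(A_{\alpha''})$. The key point to exploit is the size discrepancy: on one side the ``outer'' index ($\alpha$) is small relative to the inner one ($\beta$), while on the reversed side the outer index ($\beta''$) is large relative to the inner one ($\alpha''$).

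The cleanest way to produce the separating set is to go back to $\Delta^+$ and its fibers. Consider $X=\Delta^+=\{(n,m)\mid n<m\}$, so the vertical $n$-fiber is $X_n=(n,\infty)\cap\mathbb{N}$, and ${}^*\!X_\xi=(\xi,+\infty)\cap{}^*\mathbb{N}$. Then $X\in\mathfrak{U}_\alpha\otimes\mathfrak{U}_\beta\Leftrightarrow{}^*\!X_\alpha\in{}^*\mathfrak{U}_\beta\Leftrightarrow\beta>\alpha$, which holds by our choice; and $X\in\mathfrak{U}_{\beta''}\otimes\mathfrak{U}_{\alpha''}\Leftrightarrow{}^*\!X_{\beta''}\in{}^*\mathfrak{U}_{\alpha''}\Leftrightarrow\alpha''>\beta''$, which also holds. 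So $\Delta^+$ does not separate the \emph{tensor products}; I must instead separate after applying $S$. The honest route: since $\mathcal{U}\oplus\mathcal{V}=S(\mathcal{U}\otimes\mathcal{V})$ and $\mathcal{V}\oplus\mathcal{U}=S(\mathcal{V}\otimes\mathcal{U})$, equality of the two pseudo-sums together with $\mathcal{U}\otimes\mathcal{V}\neq\mathcal{V}\otimes\mathcal{U}$ would force $S$ to be non-injective on the relevant sets in a way controlled by the fibers; concretely, $A\in\mathcal{U}\oplus\mathcal{V}\Leftrightarrow\{n\mid A-n\in\mathcal{V}\}\in\mathcal{U}\Leftrightarrow\alpha\in{}^*(A_\beta)$. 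The plan is: assuming $\mathcal{U}\oplus\mathcal{V}=\mathcal{V}\oplus\mathcal{U}$ for \emph{every} choice of non-principal $\mathcal{V}$, specialize $\mathcal{V}$ suitably (e.g. to $\mathcal{U}\oplus\mathcal{U}$, or to an $\mathfrak{U}_\beta$ with $(\alpha,\beta)$ tensor and $\beta$ enormous) and derive from commutativity an identity of the form ${}^*\!f(\alpha){\,{\sim}_{{}_{\!\!\!\!\! u}}\;}\alpha\Rightarrow{}^*\!f(\alpha)=\alpha$ being violated — i.e. produce $f$ with ${}^*\!f(\beta''+\alpha'')$ finite while the analogous quantity on the other side is infinite, contradicting Theorem~\ref{u-identity} or directly the domination inequality from condition~(7).

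\textbf{The main obstacle.} The delicate step is pinning down exactly which $\mathcal{V}$ (equivalently which representative pair $(\beta,\alpha')$) to use so that the ordering asymmetry $\beta''<\alpha''$ on the reversed side genuinely contradicts $\beta>\alpha$ on the forward side after both are pushed through $S$ — because $S$ collapses the fiber structure, and $u$-equivalence gives us freedom to move $\alpha,\beta$ within their monads, one must rule out that this freedom ``repairs'' the asymmetry. I expect the resolution to be: choose $\beta$ via Theorem~\ref{existencetensorpairs} so large that $\beta$ dominates $u(\alpha)$ entirely (every element of the $u$-monad of $\alpha$, and every ${}^*\!f(\alpha)$), so that \emph{no} representative $\alpha''{\,{\sim}_{{}_{\!\!\!\!\! u}}\;}\alpha$ can dominate \emph{any} representative $\beta''{\,{\sim}_{{}_{\!\!\!\!\! u}}\;}\beta$; then a tensor pair $(\beta'',\alpha'')$ generating $\mathcal{V}\oplus\mathcal{U}$ cannot exist with the required property, so $\mathcal{V}\oplus\mathcal{U}$ is not even of the form $\mathfrak{U}_{\beta''+\alpha''}$ with $\mathfrak{U}_{\beta''+\alpha''}=\mathfrak{U}_{\alpha+\beta}$, forcing $\mathcal{U}\oplus\mathcal{V}\neq\mathcal{V}\oplus\mathcal{U}$. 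Making the phrase ``dominates $u(\alpha)$ entirely'' precise and checking it is consistent with $(\alpha,\beta)$ being a tensor pair — which it is, by condition~(7) combined with Theorem~\ref{existencetensorpairs}(1) and the fact that ${}^*\!f(\beta){\,{\sim}_{{}_{\!\!\!\!\! u}}\;}\beta$ iff ${}^*\!f(\beta')$ matches — is where the real work lies.
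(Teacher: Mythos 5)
There is a genuine gap here: your text is a plan that stops exactly where the proof has to happen, and the mechanism you propose for closing it cannot work. You hope to choose $\beta$ ``so large that no representative $\alpha''\ueq\alpha$ can dominate any representative $\beta''\ueq\beta$,'' so that no tensor pair $(\beta'',\alpha'')$ in the reversed order exists and $\mathcal{V}\oplus\mathcal{U}$ fails to be of the form $\mathfrak{U}_{\beta''+\alpha''}$. But Theorem \ref{monads} says every $u$-monad of an infinite number is unbounded rightward in ${}^*\mathbb{N}$ \emph{and} unbounded leftward in ${}^*\mathbb{N}\setminus\mathbb{N}$; so whatever $\beta$ you fix, there are $\beta''\ueq\beta$ below any prescribed infinite number and $\alpha''\ueq\alpha$ above it. Theorem \ref{existencetensorpairs} then always produces a tensor pair $(\beta'',\alpha'')$ with $\mathfrak{U}_{\beta''}=\mathcal{V}$, $\mathfrak{U}_{\alpha''}=\mathcal{U}$, and Proposition \ref{pseudosumgenerators} guarantees $\mathcal{V}\oplus\mathcal{U}=\mathfrak{U}_{\beta''+\alpha''}$ unconditionally. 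The ordering asymmetry of tensor pairs therefore distinguishes the two \emph{tensor products} (via $\Delta^+$, as you correctly note) but says nothing by itself about the two \emph{pseudo-sums}: $S$ collapses the order, and $\beta''+\alpha''=\alpha''+\beta''$ as numbers. To separate $\mathcal{U}\oplus\mathcal{V}$ from $\mathcal{V}\oplus\mathcal{U}$ you must exhibit an actual set $A\subseteq\mathbb{N}$ lying in one and not the other, and your argument never produces one; you acknowledge this yourself in the final sentence.

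For comparison, the paper's proof is entirely concrete and does not mention tensor pairs at all. Writing $\mathcal{U}=\mathfrak{U}_\gamma$ with $\gamma$ infinite, it takes $\nu$ with $\nu^2\le\gamma<(\nu+1)^2$ and the set $A=\bigcup_{n\ \text{even}}[n^2,(n+1)^2)$, then chooses $\beta$ to be the nearby perfect square $(\nu+1)^2$ or $\nu^2$ according to whether $(\nu+1)^2-\gamma$ is infinite or finite. The point is that the hyper-shifts $A_\gamma$ and $A_\beta$ are then wildly different (one is $\mathbb{N}$ or cofinite-like, the other is $\emptyset$ or finite), and Proposition \ref{hyper-shifts}$(1)$ immediately gives $A\in\mathfrak{U}_\beta\oplus\mathfrak{U}_\gamma$ but $A\notin\mathfrak{U}_\gamma\oplus\mathfrak{U}_\beta$ (or vice versa). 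Some such explicit ``block-structured'' witness set, tailored to the position of $\gamma$, is the missing ingredient in your approach.
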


\begin{proof}
Pick an infinite $\gamma$ such that $\mathcal{U}=\mathfrak{U}_\gamma$, and
let $\nu$ be such that $\nu^2\le\gamma<(\nu+1)^2$. Let us assume that $\nu$
is even (the case $\nu$ odd is entirely similar), and let 
\begin{equation*}
A\ =\ \bigcup_{n\ \text{even}}[n^2,(n+1)^2).
\end{equation*}
We distinguish two cases. If $(\nu+1)^2-\gamma$ is infinite, let $%
\beta=(\nu+1)^2$. Notice that $A_\gamma=\mathbb{N}$ because $\gamma+n\in{}%
^*\!A$ for all $n$, and $A_\beta=\emptyset$ because $\beta+n\notin{}^*\!A$
for all $n$. Then $A\in\mathfrak{U}_\beta\oplus\mathfrak{U}_\gamma$ because
trivially $A_\gamma\in\mathfrak{U}_\beta$, and $A\notin\mathfrak{U}%
_\gamma\oplus\mathfrak{U}_\beta$ because trivially $A_\beta\notin\mathfrak{U}%
_\gamma$. If $(\nu+1)^2-\gamma$ is finite, let $\beta=\nu^2$. In this case $%
A_\gamma$ is finite, and $A_\beta=\mathbb{N}$. Then $A\notin\mathfrak{U}%
_\beta\oplus\mathfrak{U}_\gamma$ because $A_\gamma\notin\mathfrak{U}_\beta$,
and $A\in\mathfrak{U}_\gamma\oplus\mathfrak{U}_\beta$ because $A_\beta\in%
\mathfrak{U}_\gamma$.
\end{proof}

\section{Idempotent ultrafilters}

Ultrafilters that are idempotent with respect to pseudo-sums play an
instrumental role in applications.

\begin{definition}
An ultrafilter $\mathcal{U}$ on $\mathbb{N}$ is called \emph{idempotent}%
\index{ultrafilter!idempotent} if $\mathcal{U}\oplus\mathcal{U}=\mathcal{U}$%
, \emph{i.e.} if 
\begin{equation*}
A\in\mathcal{U}\ \Longleftrightarrow\ \{n\mid A-n\in\mathcal{U}\}\in\mathcal{%
U}.
\end{equation*}
\end{definition}

\begin{theorem}
\label{idemequivalences} Let $\alpha%
\in{}^*\mathbb{N}$. The following properties are equivalent:

\begin{enumerate}
\item $\mathfrak{U}_\alpha$ is idempotent\,;

\item There exists a tensor pair $(\alpha,\beta)$ such that $\alpha{\,{\sim}%
_{{}_{\!\!\!\!\! u}}\;}\beta{\,{\sim}_{{}_{\!\!\!\!\! u}}\;}\alpha+\beta$\,;

\item For every $A\subseteq\mathbb{N}$, $A_\alpha=(A_\alpha)_\alpha$\,;

\item For every $A\subseteq\mathbb{N}$, $(A\cap A_\alpha)_\alpha=A_\alpha$\,;

\item If $\alpha\in{}^*\!A$ then $\alpha\in{}^*(A_\alpha)$\,;

\item If $\alpha\in{}^*\!A$ then $A\cap A_\alpha\ne\emptyset$\,;

\item If $\alpha\in{}^*\!A$ then there exists $B\subseteq A\cap B_\alpha$
such that $\alpha\in{}^*B$.

\item For every $A\in\mathfrak{U}_\alpha$ there exists $a\in A$ such that $%
A-a\in\mathfrak{U}_\alpha$\,;

\item For every $A\in\mathfrak{U}_\alpha$ there exists $B\subseteq A$ such
that $B\in\mathfrak{U}_\alpha$ and $B-b\in\mathfrak{U}_\alpha$ for all $b\in
B$.
\end{enumerate}
\end{theorem}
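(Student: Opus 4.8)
The strategy is to prove the equivalences by establishing a cycle of implications, together with a few direct two-way arguments, exploiting the translation dictionary already built up in Propositions~\ref{hyper-shifts} and~\ref{pseudosumgenerators} and Theorem~\ref{alltensorpairs}. First I would handle the purely ``standard'' equivalences among $(1)$, $(8)$, $(9)$: the equivalence $(1)\Leftrightarrow(8)$ is immediate by unfolding the definition of idempotent (noting $A-0 = A$, so one may take $a=0$ if convenient, but the nontrivial content is that \emph{some} $a\in A$ works), and $(8)\Leftrightarrow(9)$ is the standard refinement trick: given $A\in\mathfrak U_\alpha$, iterate to find $B\subseteq A$ with $B\in\mathfrak U_\alpha$ and $B-b\in\mathfrak U_\alpha$ for all $b\in B$, by setting $B = A\cap A_\alpha \cap\{n \mid A-n\in\mathfrak U_\alpha\cap\dots\}$ — in fact the clean way is to observe that $(9)$ is what $(1)$ gives after one application of $(4)$ plus a diagonal argument, so I would route $(9)$ through the nonstandard conditions rather than prove it by bare hands.

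The nonstandard core is the chain $(1)\Leftrightarrow(3)\Leftrightarrow(4)\Leftrightarrow(5)\Leftrightarrow(6)\Leftrightarrow(7)$ together with $(1)\Leftrightarrow(2)$. For $(1)\Leftrightarrow(2)$: by Proposition~\ref{pseudosumgenerators}, $\mathfrak U_\alpha\oplus\mathfrak U_\alpha = \mathfrak U_\alpha$ says precisely that there is a tensor pair $(\alpha',\beta)$ with $\mathfrak U_{\alpha'}=\mathfrak U_\beta=\mathfrak U_\alpha$ and $\mathfrak U_{\alpha'+\beta}=\mathfrak U_\alpha$; then Theorem~\ref{existencetensorpairs}(1) (or a direct saturation argument) lets me replace $\alpha'$ by $\alpha$ itself while keeping the tensor-pair property, giving $(2)$. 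Conversely $(2)\Rightarrow(1)$ is Proposition~\ref{pseudosumgenerators} read backwards. For $(1)\Leftrightarrow(3)$: Proposition~\ref{hyper-shifts}(4) says $\mathfrak U_\gamma = \mathfrak U_\alpha\oplus\mathfrak U_\alpha$ iff $A_\gamma = (A_\alpha)_\alpha$ for all $A$; taking $\gamma=\alpha$ this is exactly $(3)$. The equivalence $(3)\Leftrightarrow(4)$ is a set-algebra manipulation: using that hyper-shifts commute with $\cap$ (the Proposition on coherence of hyper-shifts), $(A\cap A_\alpha)_\alpha = A_\alpha \cap (A_\alpha)_\alpha$, which equals $A_\alpha$ iff $A_\alpha\subseteq(A_\alpha)_\alpha$ iff (applying this to $A_\alpha$ as well, and to complements) $A_\alpha = (A_\alpha)_\alpha$.

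The membership-flavoured conditions come next. $(3)\Leftrightarrow(5)$: condition $(5)$, ``$\alpha\in{}^*\!A\Rightarrow\alpha\in{}^*(A_\alpha)$'', says $0\in A_\alpha\Rightarrow 0\in (A_\alpha)_\alpha$, which by complementation and finite-shift coherence ($(A-n)_\alpha = A_\alpha - n$) is equivalent to $n\in A_\alpha \Leftrightarrow n\in(A_\alpha)_\alpha$ for all $n$, i.e.\ to $(3)$. Then $(5)\Leftrightarrow(6)$: ``$A\cap A_\alpha\neq\emptyset$'' is equivalent to ``$0\in(A\cap A_\alpha)_\alpha$ for a suitable shift'' — more directly, $\alpha\in{}^*(A_\alpha)$ means $A_\alpha\in\mathfrak U_\alpha$, and combined with $\alpha\in{}^*\!A$ this forces $A\cap A_\alpha\in\mathfrak U_\alpha$, hence nonempty; conversely apply $(6)$ to the sets $A\cap(B-n)$ to recover the stronger statement, or note $(6)$ applied with the refinement of $A$. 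Finally $(6)\Leftrightarrow(7)$: $(7)$ is the ``internal'' strengthening asserting a witnessing set $B\subseteq A\cap B_\alpha$ with $\alpha\in{}^*B$; one direction is trivial (such a $B$ makes $A\cap A_\alpha\supseteq B\cap B_\alpha\ni$ something nonempty after noting $B\subseteq A$ and $B\subseteq B_\alpha$ give $\alpha\in{}^*B\subseteq{}^*(B_\alpha)$), and the other direction is built by the standard idempotent recursion: starting from $A$ with $\alpha\in{}^*\!A$, use $(5)$ repeatedly to build a decreasing tower and take $B$ to be (the pullback to $\mathbb N$ of) the appropriate intersection, which is legitimate since only finitely/countably many conditions are involved and $\mathfrak U_\alpha$ is an ultrafilter — here is where one re-derives $(9)$ as the standard shadow of $(7)$.

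\textbf{Main obstacle.} The delicate point is not any single implication but making sure the ``internal'' conditions $(7)$ and $(9)$ are genuinely equivalent to the rest without circularity: the natural proof of $(6)\Rightarrow(7)$ (and of $(1)\Rightarrow(9)$) is the Galvin--Glazer-style recursive construction of a set $B$ all of whose shifts by its own elements stay large, and one must check that the recursion can be carried out purely with the already-established equivalences $(1)$--$(6)$ rather than assuming the semigroup-theoretic existence of idempotents. I expect to organize this by first proving $(1)\Leftrightarrow\cdots\Leftrightarrow(6)$ cleanly via the hyper-shift calculus, then deriving $(7)$ and $(9)$ from $(5)$ by the recursion, and finally closing the loop $(9)\Rightarrow(8)\Rightarrow(1)$, which is trivial. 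The hyper-shift translations do essentially all the work, so apart from that one recursive construction the argument is a sequence of short dictionary lookups.
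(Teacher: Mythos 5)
Your plan for $(1)$--$(6)$ is essentially the paper's own route: $(1)\Leftrightarrow(2)$ via Proposition~\ref{pseudosumgenerators} together with Theorem~\ref{existencetensorpairs}, $(1)\Leftrightarrow(3)$ via Proposition~\ref{hyper-shifts}(4), $(3)\Leftrightarrow(4)$ by hyper-shift algebra plus complementation, and so on. Two of your ``immediate'' steps do need the complementation trick spelled out --- $(6)\Rightarrow(5)$ is proved by applying $(6)$ to $A\cap(A_\alpha)^c$ (not to the sets $A\cap(B-n)$ you suggest), and the backward direction of your claimed ``immediate'' $(8)\Leftrightarrow(1)$ needs the same move --- but these are fixable. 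The genuine gap is in your treatment of $(7)$ and $(9)$. You identify this as the main obstacle and propose a Galvin--Glazer-style recursion: ``build a decreasing tower and take $B$ to be the appropriate intersection, which is legitimate since only finitely/countably many conditions are involved and $\mathfrak{U}_\alpha$ is an ultrafilter.'' That step fails: a non-principal ultrafilter on $\mathbb{N}$ is not countably complete, so a countable decreasing tower of members of $\mathfrak{U}_\alpha$ need not have its intersection in $\mathfrak{U}_\alpha$ (the intersection can even be empty). The Galvin--Glazer recursion produces a countable set of generators, as in the proof of Hindman's theorem later in the paper, and such a set is in general \emph{not} a member of the ultrafilter --- whereas $(7)$ requires $\alpha\in{}^*B$, i.e.\ $B\in\mathfrak{U}_\alpha$.

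No recursion is needed, and this is the point of the paper's argument for $(3)\,\&\,(5)\Rightarrow(7)$: take $B=A\cap A_\alpha$ in one step. Then $\alpha\in{}^*\!A\cap{}^*(A_\alpha)={}^*B$ by $(5)$, trivially $B\subseteq A$, and
\begin{equation*}
B\ \subseteq\ A_\alpha\ =\ A_\alpha\cap(A_\alpha)_\alpha\ =\ (A\cap A_\alpha)_\alpha\ =\ B_\alpha
\end{equation*}
by $(3)$, so $B\subseteq A\cap B_\alpha$. You actually brush against this when you write ``one application of $(4)$,'' since $(4)$ is exactly the identity $(A\cap A_\alpha)_\alpha=A_\alpha$ that yields $B\subseteq B_\alpha$; commit to that and drop the tower and the ``diagonal argument.'' In standard terms this is the familiar set $A^\star=\{a\in A\mid A-a\in\mathfrak{U}_\alpha\}$, which lies in $\mathfrak{U}_\alpha$ and satisfies $A^\star-a\in\mathfrak{U}_\alpha$ for every $a\in A^\star$ after a single further application of idempotency --- that is $(9)$, and $(8)$, $(9)$ are then just the standard transcriptions of $(6)$, $(7)$. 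With this replacement your outline coincides with the paper's proof.
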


\begin{proof}
$(1)\Leftrightarrow(2)$. By Theorem \ref{existencetensorpairs}, we can pick $%
\beta$ such that $(\alpha,\beta)$ is a tensor pair and $(\alpha,\beta){\,{%
\sim}_{{}_{\!\!\!\!\! u}}\;}(\alpha,\alpha)$. If $\mathfrak{U}_\alpha$ is
idempotent, then 
\begin{equation*}
\mathfrak{U}_\beta\ =\ \mathfrak{U}_\alpha\ =\ \mathfrak{U}_\alpha\oplus%
\mathfrak{U}_\alpha\ =\ S(\mathfrak{U}_\alpha\otimes\mathfrak{U}_\alpha)\ =\
S(\mathfrak{U}_{(\alpha,\beta)})\ =\ \mathfrak{U}_{\alpha+\beta},
\end{equation*}
where $S$ denotes the sum function. Conversely, by Proposition \ref%
{pseudosumgenerators}, 
\begin{equation*}
\mathfrak{U}_\alpha\oplus\mathfrak{U}_\alpha\ =\ \mathfrak{U}_\alpha\oplus%
\mathfrak{U}_\beta\ =\ \mathfrak{U}_{\alpha+\beta}\ =\ \mathfrak{U}_\alpha.
\end{equation*}

$(1)\Leftrightarrow(3)$ is given by property $(4)$ of Proposition \ref%
{hyper-shifts}.

$(3)\Leftrightarrow(4)$. Notice that $(A\cap
A_\alpha)_\alpha=A_\alpha\cap(A_\alpha)_\alpha$, so property $(4)$ is
equivalent to $A_\alpha\subseteq(A_\alpha)_\alpha$ for every $A\subseteq%
\mathbb{N}$, and one implication is trivial. The converse implication $%
(A_\alpha)_\alpha\subseteq A_\alpha$ is proved by considering complements: 
\begin{equation*}
(A_\alpha)^c\ =\ (A^c)_\alpha\ \subseteq\ [(A^c)_\alpha]_\alpha\ =\
[(A_\alpha)^c]_\alpha\ =\ [(A_\alpha)_\alpha]^c.
\end{equation*}

$(1)\Leftrightarrow(5)$. We recall that $\alpha\in{}^*(A_\alpha)%
\Leftrightarrow A\in\mathfrak{U}_\alpha\oplus\mathfrak{U}_\alpha$. So $(5)$
states the inclusion between ultrafilters $\mathfrak{U}_\alpha\subseteq%
\mathfrak{U}_\alpha\oplus\mathfrak{U}_\alpha$, which is equivalent to
equality $\mathfrak{U}_\alpha=\mathfrak{U}_\alpha\oplus\mathfrak{U}_\alpha$.

$(5)\Rightarrow(6)$. Since $\alpha\in{}^*\!A\cap{}^*(A_\alpha)={}^*(A\cap
A_\alpha)$, by \emph{transfer} we obtain the thesis.

$(6)\Rightarrow(5)$. Assume by contradiction that $\alpha\in{}^*\!A$ but $%
\alpha\notin{}^*(A_\alpha)$. Then $\alpha\in{}^*B$ where $%
B=A\cap(A_\alpha)^c $. This is against the hypothesis because $B\cap
B_\alpha=(A\cap (A_\alpha)^c)\cap (A_\alpha\cap[(A_\alpha)_\alpha]^c)=
\emptyset$.

$(3)\,\&\,(5)\Rightarrow(7)$. Let $B=A\cap A_\alpha$. Then, $\alpha\in{}^*\!A%
\cap{}^*(A_\alpha)={}^*B$. Moreover, trivially $B\subseteq A$, and also 
\begin{equation*}
B\ \subseteq\ A_\alpha\ =\ A_\alpha\cap(A_\alpha)_\alpha\ =\ (A\cap
A_\alpha)_\alpha\ =\ B_\alpha.
\end{equation*}

$(7)\Rightarrow(6)$. Given $\alpha\in{}^*\!A$, pick $B$ as in the
hypothesis. Notice that $B\subseteq A$; moreover $B\subseteq
B_\alpha\subseteq(A\cap B_\alpha)_\alpha=A_\alpha\cap
(B_\alpha)_\alpha\subseteq A_\alpha$, and hence $B\subseteq A\cap A_\alpha$.
We conclude by noticing that $\alpha\in{}^*B$, and so, by \emph{transfer}, $%
B\ne\emptyset$.

Finally, it is easily verified that properties $(8)$ and $(9)$ are the
``standard'' counterparts of properties $(6)$ and $(7)$, respectively.
\end{proof}

We remark that the existence of idempotent ultrafilters is not a trivial
matter: it is proved as an application of a general result by R. Ellis about
the existence of idempotents in every compact Hausdorff topological left
semigroup (see, \emph{e.g.}, \cite{11hs}). Indeed, $(\beta\mathbb{N},\oplus)$
is a compact Hausdorff topological left semigroup.

Historically, the first application of idempotent ultrafilters in
combinatorics of numbers was a short and elegant proof of Hindman's theorem
found by F. Galvin and S. Glazer. The original argument used by N. Hindman
in his proof \cite{11ht} was really intricated. Actually, Hindman himself
wrote in \cite{11hj}: \emph{``If the reader has a graduate student that she
wants to punish, she should make him read and understand that original proof"%
}. A detailed report about the discovery of the ultrafilter proof can be
found in \cite{11hj}.

\begin{theorem}[Hindman]
\index{Hindman's Theorem} For every finite partition $\mathbb{N}%
=C_1\cup\ldots\cup C_r$ of the natural numbers, there exists an infinite set 
$X$ such that every (finite) sum of distinct elements from $X$ belongs to
the same piece $C_i$.
\end{theorem}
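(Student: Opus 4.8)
The plan is to run the Galvin--Glazer argument, using an idempotent ultrafilter together with the explicit combinatorial reformulation~(9) of Theorem~\ref{idemequivalences}, and to build the set $X$ by a single recursion. First I would fix a \emph{non-principal} idempotent ultrafilter $\mathcal{U}=\mathfrak{U}_\alpha$ on $\mathbb{N}$. Such an ultrafilter exists: by R. Ellis' theorem every non-empty compact Hausdorff topological left semigroup contains an idempotent, $(\beta\mathbb{N},\oplus)$ is such a semigroup (as recalled above), and the set of non-principal ultrafilters is readily seen to be a closed sub-semigroup of $(\beta\mathbb{N},\oplus)$, so applying Ellis' theorem to it produces a non-principal idempotent. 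Given the partition $\mathbb{N}=C_1\cup\cdots\cup C_r$, exactly one piece, say $C_i$, belongs to $\mathcal{U}$; write $A_0=C_i$. Note that, $\mathcal{U}$ being non-principal, every member of $\mathcal{U}$ is infinite.

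Next I would recursively produce natural numbers $x_1<x_2<\cdots$ together with a decreasing chain $A_0\supseteq A_1\supseteq A_2\supseteq\cdots$ of members of $\mathcal{U}$ such that, for every $n$, one has $x_{n+1}\in A_n$ and $x_{n+1}+A_{n+1}\subseteq A_n$. For the inductive step, suppose $A_n\in\mathcal{U}$ has been defined; property~(9) of Theorem~\ref{idemequivalences}, applied to the set $A_n\in\mathfrak{U}_\alpha$, yields a set $B_n\subseteq A_n$ with $B_n\in\mathcal{U}$ and $B_n-b\in\mathcal{U}$ for all $b\in B_n$. Since $B_n$ is infinite I may choose $x_{n+1}\in B_n$ with $x_{n+1}>x_n$, and then put $A_{n+1}=B_n\cap(B_n-x_{n+1})$. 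This $A_{n+1}$ lies in $\mathcal{U}$ because both $B_n$ and $B_n-x_{n+1}$ do; moreover $A_{n+1}\subseteq B_n\subseteq A_n$, and the inclusion $A_{n+1}\subseteq B_n-x_{n+1}$ forces $x_{n+1}+A_{n+1}\subseteq B_n\subseteq A_n$, as required.

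Finally I would set $X=\{x_1,x_2,\ldots\}$, which is infinite because the $x_n$ are strictly increasing, and verify by induction on $k$ that $x_{n_1}+x_{n_2}+\cdots+x_{n_k}\in A_{n_1-1}$ whenever $1\le n_1<n_2<\cdots<n_k$. The case $k=1$ is $x_{n_1}\in B_{n_1-1}\subseteq A_{n_1-1}$. For the step, the partial sum $x_{n_2}+\cdots+x_{n_k}$ lies in $A_{n_2-1}$ by the inductive hypothesis, hence in $A_{n_1}$ since $n_1\le n_2-1$ and the chain is decreasing, so that $x_{n_1}+(x_{n_2}+\cdots+x_{n_k})\in x_{n_1}+A_{n_1}\subseteq A_{n_1-1}$. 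In particular every finite sum of distinct elements of $X$ lies in $A_0=C_i$, which is exactly the assertion.

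The one genuinely non-elementary ingredient is the existence of a non-principal idempotent in $(\beta\mathbb{N},\oplus)$ --- that is, R. Ellis' theorem --- together with the equivalence, already contained in Theorem~\ref{idemequivalences}, between idempotency and the directly usable partition property~(9). Once these are granted, the recursion and the summation induction are routine, so the real difficulty is front-loaded into that algebraic fact, which I would simply cite.
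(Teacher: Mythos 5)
Your proof is correct. It is the Galvin--Glazer argument, just as in the paper, but you implement it in its purely ``standard'' form: you invoke property~(9) of Theorem~\ref{idemequivalences} afresh at every stage, producing a nested chain $A_0\supseteq A_1\supseteq\cdots$ of members of $\mathcal{U}$ with $x_{n+1}+A_{n+1}\subseteq A_n$, and then run the summation induction down the chain. The paper instead uses the nonstandard counterpart~(7) exactly once: it extracts a single set $B\subseteq C_i\cap B_\alpha$ with $\alpha\in{}^*B$, so that membership of any finite sum in $B$ automatically yields $\alpha\in{}^*(B-x)$ for that sum $x$, and the recursion then needs only \emph{transfer} (with $\alpha$ as the witness) to pick each new element inside the finitely many shifted copies of $B$ accumulated so far --- no new sets are ever introduced. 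The two bookkeeping schemes are interchangeable; yours is the classical textbook presentation, the paper's is tailored to showcase the hypernatural-number machinery ($\alpha$ as a reusable witness plus transfer). Your extra care in securing a \emph{non-principal} idempotent (restricting Ellis' theorem to the closed sub-semigroup $\beta\mathbb{N}\setminus\mathbb{N}$) is sound and slightly more scrupulous than the paper, which tacitly relies on the fact that with $\mathbb{N}$ starting at $1$ no principal ultrafilter can be idempotent; either way the elements of every member of $\mathcal{U}$ can be chosen increasingly, which is all both proofs need.
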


\begin{proof}
Pick $\alpha$ such that $\mathfrak{U}_\alpha$ is idempotent, and let $C_i$
be the piece of the partition such that $\alpha%
\in{}^*C_i$. By $(7)$ of Theorem \ref{idemequivalences}, we can fix a set $%
B\subseteq C_i\cap B_\alpha$ with $\alpha\in{}^*B$. Notice that $x\in
B\Rightarrow x\in B_\alpha\Leftrightarrow \alpha+x\in{}^*B\Leftrightarrow%
\alpha\in{}^*(B-x)$.

Now pick any $x_1\in B$. Then $\alpha$ witnesses the existence in ${}^*B$ of
elements larger than $x_1$ that belong to ${}^*(B-x_1)$. By \emph{transfer},
we obtain the existence in $B$ of an element $x_2>x_1$ such that $x_2\in
B-x_1$. Notice that $x_1,x_2,x_1+x_2\in B$, and hence $\alpha\in{}^*(B-x_2)$
and $\alpha\in{}^*(B-x_1-x_2)$. Similarly as above, $\alpha$ witnesses the
existence in ${}^*B$ of elements larger than $x_2$ that belong to $%
{}^*[(B-x_1)\cap(B-x_2)\cap(B-x_1-x_2)]$. Again by using \emph{transfer}, we
get the existence in $B$ of an element $x_3>x_2$ such that $x_3\in
(B-x_1)\cap(B-x_2)\cap(B-x_1-x_2)$, and so we have that $%
x_1,x_2,x_3,x_1+x_2,x_1+x_3,x_2+x_3,x_1+x_2+x_3\in B$. By iterating the
process, we eventually obtain a set $X=\{x_1<x_2<\ldots<x_n<\ldots\}$ such
that every sum of distinct elements from $X$ belongs to $B$, and hence to
the same piece $C_i$ of the partition, as desired.
\end{proof}

We recall that in the definition of the pseudo-sum $\mathcal{U}\oplus%
\mathcal{V}$, one considers \emph{leftward} shifts $A-n=\{m\mid m+n\in A\}$.
By considering instead \emph{rightward} shifts $A+n=\{a+n\mid a\in A\}$, one
obtains the following operation.

\begin{definition}
Let $\mathcal{U},\mathcal{V}$ be ultrafilters on $\mathbb{N}$, where $%
\mathcal{V}$ is non-principal. The ultrafilter $\mathcal{U}\star\mathcal{V}$
is defined by setting for every $A\subseteq\mathbb{N}$: 
\begin{equation*}
A\in\mathcal{U}\star\mathcal{V}\ \Leftrightarrow\ \{n\mid A+n\in\mathcal{V}%
\}\in\mathcal{U}.
\end{equation*}
\end{definition}

Notice that one can identify $\mathcal{U}\star\mathcal{V}$ with the image $D(%
\mathcal{U}\otimes\mathcal{V})$ of the tensor product $\mathcal{U}\otimes%
\mathcal{V}$ under the difference function $D(n,m)=m-n$. Indeed, although $D$
takes values in $\mathbb{Z}$, one has that $\mathbb{N}\in\mathcal{U}\star%
\mathcal{V}$ whenever $\mathcal{V}$ is non-principal and so, in this case,
one can restrict to subsets of $\mathbb{N}$.

In a similar way as done in Theorem \ref{idemequivalences}, one can prove
several nonstandard characterizations of ultrafilters that are idempotent
with respect to $\star$. In particular, corresponding to item $(2)$ in
Theorem \ref{idemequivalences}, it is shown that $\mathfrak{U}_\alpha\star%
\mathfrak{U}_\alpha=\mathfrak{U}_\alpha$ if and only if there exists a
tensor pair $(\alpha,\beta)$ such that $\alpha{\,{\sim}_{{}_{\!\!\!\!\! u}}\;%
}\beta{\,{\sim}_{{}_{\!\!\!\!\! u}}\;}\beta-\alpha$. The problem is that
there can be no such pair!

\begin{theorem}
\label{norightidem} There exist pairs $(\alpha,\beta)$ such that $\alpha{\,{%
\sim}_{{}_{\!\!\!\!\! u}}\;}\beta{\,{\sim}_{{}_{\!\!\!\!\! u}}\;}%
\beta-\alpha $, but not one of them is a tensor pair. In consequence, there
exist no ultrafilters $\mathcal{U}$ such that $\mathcal{U}\star\mathcal{U}=%
\mathcal{U} $.
\end{theorem}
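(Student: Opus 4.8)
The statement has two parts --- existence of pairs with $\alpha\ueq\beta\ueq\beta-\alpha$, and the fact that none of them is a tensor pair --- and the closing assertion follows from the second part together with the $\star$-analogue of item $(2)$ of Theorem~\ref{idemequivalences} announced just above: $\U\star\U=\U$ holds iff $\U=\mathfrak{U}_\alpha$ for some $\alpha$ admitting a tensor pair $(\alpha,\beta)$ with $\alpha\ueq\beta\ueq\beta-\alpha$. For the existence part I would take an $\oplus$-idempotent $\U=\mathfrak{U}_\alpha$ (these exist by Ellis' theorem, recalled after Theorem~\ref{idemequivalences}); by item $(2)$ of Theorem~\ref{idemequivalences} there is a tensor pair $(\alpha,\delta)$ with $\alpha\ueq\delta\ueq\alpha+\delta$, and then $\beta:=\alpha+\delta$ satisfies $\alpha\ueq\beta$ and $\beta-\alpha=\delta\ueq\alpha\ueq\beta$.

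The \emph{core} is that no such $(\alpha,\beta)$ is a tensor pair. Assume, for contradiction, that $(\alpha,\beta)$ is a tensor pair with $\alpha\ueq\beta\ueq\gamma$, where $\gamma:=\beta-\alpha$; then $\alpha$ (hence $\beta$ and $\gamma$) is infinite, since for finite $\alpha$ one would get $\beta=\alpha$ and $\gamma=0\ueq\beta$, forcing $\alpha=\beta=0$. Two observations drive the argument. First, by condition $(7)$ of Theorem~\ref{alltensorpairs}, for every non-decreasing unbounded $h\colon\N\to\N$ the value ${}^*\!h(\beta)$ is infinite and hence $>\alpha$; thus $\alpha$ is ``infinitesimally small relative to $\beta$''. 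Second, for any $f,g\colon\N\to\N$ the pair $({}^*\!f(\alpha),{}^*\!g(\beta))$ is again a tensor pair (Proposition~\ref{imagetensor}), and $u$-equivalence is preserved under hyper-extensions of functions (Proposition~\ref{ueq1}); taking $f=\mathrm{id}$ this lets me replace $\beta$ by ${}^*\!g(\beta)$ while retaining both the tensor property and the relation $({}^*\!g(\beta))-\alpha={}^*\!g(\gamma)\ueq{}^*\!g(\beta)$, provided ${}^*\!g$ subtracts the same quantity from $\beta$ and from $\gamma$.

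Next I would try to produce a set $B\subseteq\N$ with $\beta\in{}^*\!B$ but $\gamma\notin{}^*\!B$, contradicting $\gamma\ueq\beta$. Fix a fast sequence $1=a_0<a_1<a_2<\cdots$ (say $a_{k+1}=2^{a_k}$), put $B=\bigcup_{k\ \mathrm{even}}[a_k,a_{k+1})$, and let $[a_\nu,a_{\nu+1})$ be the block of $\beta$, with $\nu$ infinite. By the first observation the offset $\beta-a_\nu$ is finite or infinite. If it is finite, then $\gamma=a_\nu-\bigl(\alpha-(\beta-a_\nu)\bigr)$, where $\alpha-(\beta-a_\nu)$ is a positive infinite number $<\alpha<a_\nu-a_{\nu-1}$; hence $\gamma$ lies in the previous block $[a_{\nu-1},a_\nu)$, which has opposite parity, and $B$ separates $\beta$ from $\gamma$ --- contradiction. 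If the offset is infinite, then condition $(7)$ gives $\beta-a_\nu>\alpha$, so $\gamma$ lies in the \emph{same} block as $\beta$ and $B$ is useless; here I replace $(\alpha,\beta)$ by the strictly smaller tensor pair $\bigl(\alpha,\beta-a_\nu\bigr)$, which still has the $u$-property, and repeat.

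The delicate point --- and, I expect, the main obstacle --- is to force this descent to terminate: one must rule out a $\beta$ so ``tall'' that at every finite stage the offset stays infinite (the crude size estimates available do not suffice, since for a sufficiently large $\beta$ all iterates can remain infinite). I would close this either by choosing the separating family together with $\alpha$ --- which may be taken arbitrarily far to the left in its $u$-monad, by Theorem~\ref{monads} --- more carefully, or by passing to the standard side: over $\Z$ one has $\U\star\V=\U^{-}\oplus\V$ with $\U^{-}=\{-A\mid A\in\U\}$, so $\U\star\U=\U$ would make $\U$ and $\U^{-}$ two idempotents of $(\beta\Z,\oplus)$ with $\U\oplus\U^{-}=\U^{-}$ and $\U^{-}\oplus\U=\U$; the closed subsemigroup $\{x\mid x\oplus\U=\U\}$ would then have $\U$ and $\U^{-}$ as two distinct right zeros, a configuration to be excluded using the additive structure of $\Z$.
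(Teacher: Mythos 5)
Your existence argument is correct but takes a genuinely different route from the paper's: you derive a pair with $\alpha\ueq\beta\ueq\beta-\alpha$ from an idempotent ultrafilter (Ellis' theorem plus item $(2)$ of Theorem \ref{idemequivalences}), whereas the paper obtains one directly by $\mathfrak{c}^+$-saturation applied to the family of sets $\Gamma(A)=\{(a,b)\mid a,b,b-a\in A\ \text{or}\ a,b,b-a\in A^c\}$, whose finite intersection property follows from Rado's theorem (partition regularity of $X-Y=Z$). Both work; the paper's route is lighter in that it does not invoke the existence of idempotents.

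The core of the theorem --- that no such pair is a tensor pair --- is where your proposal has a genuine gap, and you identify it yourself: the block-separation descent (replacing $\beta$ by $\beta-a_\nu$ whenever the offset is infinite) has no termination mechanism, the preservation of the hypotheses along the descent is asserted rather than proved, and neither of the two escape routes you sketch is carried out. (The second, via right zeros in $(\beta\Z,\oplus)$: the identities $\U^-\oplus\U=\U$ and $\U\oplus\U^-=\U^-$ do follow, but you give no reason why that configuration is impossible.) The paper avoids any descent by a single divisibility computation. First, $\alpha\ueq\beta\ueq\beta-\alpha$ forces $\alpha\equiv\beta\equiv\beta-\alpha\equiv 0\pmod n$ for every finite $n\ge 2$. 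Writing $n=3^{f(n)}g(n)$ with $3\nmid g(n)$, both ${}^*\!f(\alpha)$ and ${}^*\!f(\beta)$ are therefore infinite, and $u$-equivalence gives ${}^*\!g(\alpha)\equiv{}^*\!g(\beta)\equiv j\pmod 3$ with $j\in\{1,2\}$. If $(\alpha,\beta)$ were a tensor pair, Proposition \ref{imagetensor} would make $({}^*\!f(\alpha),{}^*\!f(\beta))$ a tensor pair of infinite numbers, hence ${}^*\!f(\alpha)<{}^*\!f(\beta)$ by condition $(7)$ of Theorem \ref{alltensorpairs}; factoring $3^{{}^*\!f(\alpha)}$ out of $\beta-\alpha$ then yields ${}^*\!g(\beta-\alpha)\equiv-j\pmod 3$, while $\beta-\alpha\ueq\alpha$ forces ${}^*\!g(\beta-\alpha)\equiv j\pmod 3$, so $j=0$, a contradiction. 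This valuation-theoretic step is the missing idea; your separating-set strategy, as it stands, does not close.
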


\begin{proof}
For every $A\subseteq\mathbb{N}$, let us consider the set 
\begin{equation*}
\Gamma(A)\ =\ \{(a,b)\in\mathbb{N}\times\mathbb{N}\mid \text{either}\
a,b,b-a\in A\ \text{or}\ a,b,b-a\in A^c\}.
\end{equation*}
We want to show that the family $\{\Gamma(A)\mid A\subseteq\mathbb{N}\}$ has
the finite intersection property. Once this is proved, by $\mathfrak{c}^+$-%
\emph{saturation} one can pick a pair $(\alpha,\beta)\in\bigcap_{A\subseteq%
\mathbb{N}}{}^*\Gamma(A)$. It is then easily verified that $\alpha{\,{\sim}%
_{{}_{\!\!\!\!\! u}}\;}\beta{\,{\sim}_{{}_{\!\!\!\!\! u}}\;}\beta-\alpha$.

Given $A_1,\ldots,A_n$, pick a finite partition $\mathbb{N}%
=C_1\cup\ldots\cup C_r$ such that, for every piece $C_i$ and every set $A_j$%
, one has that either $C_i\subseteq A_j$ or $C_i\subseteq A_j^c$. Now, by
Rado's theorem, the equation $X-Y=Z$ is \emph{partition regular} on $\mathbb{%
N}$, and so we can pick elements $x,y,z\in C_i$ in one piece of the
partition that satisfy $x-y=z$; in consequence, $(x,y)\in\bigcap_{i=1}^r%
\Gamma(C_j)\subseteq \bigcap_{j=1}^n\Gamma(A_i)\ne\emptyset$. We recall that
an equation $f(X_1,\ldots,X_n)=0$ is called \emph{partition regular} on $%
\mathbb{N}$ when for every finite partition of $\mathbb{N}$ there exist $%
x_1,\ldots,x_n$ in the same piece of the partition that solve the equation, 
\emph{i.e.} $f(x_1,\ldots,x_n)=0$. Rado's theorem states that a linear
equation $c_1X_1+\ldots+c_nX_n=0$ (where the $c_i\ne 0$) is partition
regular on $\mathbb{N}$ if and only if there exists a sum of distinct
coefficients that equals zero (see \cite[Ch.3]{11grs}).

Let us now turn to the negative result, and assume $\alpha{\,{\sim}%
_{{}_{\!\!\!\!\! u}}\;}\beta\,{\,{\sim}_{{}_{\!\!\!\!\! u}}\;}\,\beta-\alpha$%
. Notice that both $\alpha$ and $\beta$ must be multiples of every (finite)
natural number. Indeed, given $n\ge 2$, the $u$-equivalence of $\alpha$ and $%
\beta$ implies that $\alpha\equiv \beta\mod n$, and hence $%
\beta-\alpha\equiv 0\mod n$. But $\beta-\alpha$ is $u$-equivalent to both $%
\alpha$ and $\beta$, and so $\alpha\equiv\beta\equiv\beta-\alpha\equiv 0%
\mod
n$. Now, let us consider the functions $f,g:\mathbb{N}\to\mathbb{N}\cup\{0\}$
where $f(n)$ is the greatest exponent such that $3^{f(n)}$ divides $n$, and $%
g(n)=n/3^{f(n)}$. By what proved above, both ${}^*\!f(\alpha)$ and $%
{}^*\!f(\beta)$ are infinite. Since $\alpha{\,{\sim}_{{}_{\!\!\!\!\! u}}\;}%
\beta$, we have ${}^*\!g(\alpha){\,{\sim}_{{}_{\!\!\!\!\! u}}\;}%
{}^*\!g(\beta)$, and so ${}^*\!g(\alpha)\equiv{}^*\!g(\beta)=j\mod 3$, where
either $j=1$ or $j=2$. Now assume by contradiction that $(\alpha,\beta)$ is
a tensor pair. By Proposition \ref{imagetensor}, also $({}^*\!f(\alpha),{}^*%
\!f(\beta))$ is a tensor pair, and since both components are infinite, it is 
${}^*\!f(\alpha)<{}^*\!f(\beta)$. Then we have: 
\begin{equation*}
\beta-\alpha\ =\ 3^{{}^*\!f(\beta)}\cdot{}^*\!g(\beta)\,-\,
3^{{}^*\!f(\alpha)}\cdot{}^*\!g(\alpha)\ =\ 3^{{}^*\!f(\alpha)}\cdot(3^\nu%
\cdot{}^*\!g(\beta)-{}^*\!g(\alpha))
\end{equation*}
where $\nu={}^*\!f(\beta)-{}^*\!f(\alpha)>0$. In consequence, $%
{}^*\!f(\beta-\alpha)={}^*\!f(\alpha)$ and ${}^*\!g(\beta-\alpha)=3^\nu\cdot{%
}^*\!g(\beta)-{}^*\!g(\alpha)\equiv -{}^*\!g(\alpha)\equiv -j\mod 3$, while $%
\beta-\alpha\,{\,{\sim}_{{}_{\!\!\!\!\! u}}\;}\,\alpha\Rightarrow
{}^*\!g(\beta-\alpha){\,{\sim}_{{}_{\!\!\!\!\! u}}\;}{}^*\!g(\alpha) 
\Rightarrow{}^*\!g(\beta-\alpha)\equiv{}^*\!g(\alpha)\equiv j\mod (3)$. We
must conclude that $-j\equiv j\mod 3$, and hence $j=0$, a contradiction.
(The last part of this argument is essentially the same as the one used by
Hindman in \cite[\S 4]{11h72} to prove the non-existence of ultrafilters $%
\mathcal{U}=\mathcal{U}\star\mathcal{U}$.)
\end{proof}

\section{Final remarks and open questions}

Since a first draft of this paper was written in 2009, several applications
of the presented nonstandard approach to the use of ultrafilters appeared in
the literature. In \cite{11dn-rado}, iterated nonstandard extensions were
used to characterize idempotent ultrafilters along the lines of Theorem \ref%
{idemequivalences}; and by using suitable linear combinations of idempotent
ultrafilters, a new proof of a version of Rad\'o's Theorem was given.
Partition regularity of (nonlinear) polynomial equations by nonstandard
methods is the subject-matter of the paper \cite{11lu-pr}. In \cite{11bd}, a
notion of finite embeddability between sets and between ultrafilters is
investigated, also with the use of the hyper-shifts of \S 5. The papers \cite%
{11lu-fe,11lu-fe2} continue that line of research: the nonstandard approach is
exploited to further investigating the relationships between finite
embeddability relations, algebraic properties in $(\beta\mathbb{N},\oplus)$,
and combinatorial structure of sets of natural numbers.

We like to close this paper with some remarks about idempotent ultrafilters.
To this day, basically the only known proof of their existence is grounded
on an old result by R. Ellis, namely the fact that every compact Hausdorff
topological left semigroup has idempotents (see \cite{11el}). Since
idempotent ultrafilters are widely used in applications, it seems desirable
to better understand them; to this end, a solution to the following problem
would be valuable.

\begin{itemize}
\item Open problem \#1: \emph{Find an alternative, nonstandard proof of the
existence of idempotent ultrafilters.}
\end{itemize}

Our notions of $u$-equivalence and of tensor pair, and hence of idempotent
ultrafilter, can be generalized to models $M$ of any first-order theory $%
\mathsf{T}\supseteq\mathsf{PA}$ that extends \emph{Peano Arithmetic}. (By
this we mean that $\mathsf{T}$ is a collection of sentences in a first-order
language $\mathcal{L}$ that extends the language of \textsf{PA}.)

We recall that the \emph{type} of an element $a\in M$ is the set of all
formulas with one free variable that are satisfied by $a$ in $M$: 
\begin{equation*}
tp(a)\ =\ \{\varphi(x)\mid M\models\varphi(a)\}.
\end{equation*}
Another notion that makes sense for models $M$ of theories $\mathsf{T}%
\supseteq\mathsf{PA}$ is the following. Call a pair $(a,b)\in M\times M$ 
\emph{independent} when for every formula $\varphi(x,y)$, if $%
M\models\varphi(a,b)$ then $M\models\varphi(k,b)$ for some $k\in\mathbb{N}$.
(This definition corresponds to the notion of \emph{heir of a type}, as used
in stable theories.)

If $\text{Th}(\mathbb{N})$ is the first-order theory of the natural numbers
in the full language that contains a symbol for every relation, function and
constant of $\mathbb{N}$, then $M\models\text{Th}(\mathbb{N})$ means that $%
M={}^*\mathbb{N}$ is the set of hypernatural numbers of a model of
nonstandard analysis. In this case, trivially every subset $A\subseteq%
\mathbb{N}$ is definable, and hence $tp(a)=tp(b)$ if and only if $a{\,{\sim}%
_{{}_{\!\!\!\!\! u}}\;} b$. Moreover, $(a,b)$ is independent means that $%
(a,b)$ is a tensor pair (see $(6)$ of Theorem \ref{alltensorpairs}). Thus,
by using property $(2)$ of Theorem \ref{idemequivalences}, one could propose
the following generalization of idempotent ultrafilter.

\begin{definition}
Let $M\models\mathsf{T}\supseteq\mathsf{PA}$. We say that an element $%
\alpha\in M$ is \emph{idempotent} if there exists an independent pair $%
(\alpha,\beta)$ such that $tp(\alpha)=tp(\beta)=tp(\alpha+\beta)$.
\end{definition}

\begin{itemize}
\item Open problem \# 2: \emph{Given a first-order theory $\mathsf{T}%
\supseteq\mathsf{PA}$, find sufficient conditions for models $M\models%
\mathsf{T}$ to contain idempotent elements}.
\end{itemize}

We recall that in any $\mathfrak{c}^{+}$-\emph{saturated} model of
nonstandard analysis, every ultrafilter on $\mathbb{N}$ is generated by some
element $\alpha \in {}^{\ast }\mathbb{N}$. In consequence, all $\mathfrak{c}%
^{+}$-saturated models of $\text{Th}(\mathbb{N})$ contain idempotent
elements. Isolating model-theoretic properties that guarantee the existence
of idempotent elements would probably be useful also to attack the previous
open problem.

\section{References}

\end{document}